\documentclass[a4paper,11pt,reqno]{amsart}

\usepackage{amsmath,amsthm,latexsym,amscd,amssymb}
\usepackage[all]{xy}
\usepackage{colortbl}
\usepackage{hyperref}
\usepackage{fancyhdr}
\usepackage{amscd}
\usepackage{url}
\numberwithin{equation}{section}

%\addtolength{\footskip}{+2cm}
\addtolength{\textheight}{+0.6cm} 
\addtolength{\voffset}{-0.5cm}
\addtolength{\headsep}{+0.2cm}
\addtolength{\textwidth}{+2.4cm}
\addtolength{\marginparwidth}{-50pt}
\addtolength{\hoffset}{-34pt}

\begin{document}
%\swapnumbers
\newtheorem{prop}{Proposition}[section]
\newtheorem{lem}[prop]{Lemma}
\newtheorem{theorem}[prop]{Theorem}
\newtheorem{cor}[prop]{Corollary}
\newtheorem{conj}[prop]{Conjecture}
\newtheorem{que}[prop]{Question}
\theoremstyle{definition}
\newtheorem{dfn}[prop]{Definition}
\newtheorem{ex}[prop]{Example}
\theoremstyle{remark}
\newtheorem{rem}[prop]{Remark}

\def\no{\noindent}
% quando il footnote e' della stessa dimensione:
\newcommand{\foot}[1]{\footnote{\begin{normalsize}#1\end{normalsize}}}

% o p e r a t o r i     m a t e m a t i c i    

\def\dim{\mathop{\rm dim}}\def\codim{\mathop{\rm codim}}\def\Re{\mathop{\rm Re}}
\def\Im{\mathop{\rm Im}}\def\I{\mathop{\rm I}}
\def\Id{\mathop{\rm Id}}\def\grad{\mathop{\rm grad}}
\def\vol{\mathop{\rm vol}}\def\SU{\mathop{\rm SU}}
\def\SO{\mathop{\rm SO}}\def\Aut{\mathop{\rm Aut}}
\def\End{\mathop{\rm End}\nolimits}\def\GL{\mathop{\rm GL}}\def\Cinf{\mathop{\mathcal C^{\infty}}}\def\Ker{\mathop{\rm Ker}}
\def\Coker{\mathop{\rm Coker}}\def\dom{\mathop{\rm Dom}}
\def\Hom{\mathop{\rm Hom}}\def\Ch{\mathop{\rm Ch}}
\def\sign{\mathop{\rm sign}}\def\loc{\mathop{\rm loc}}
\def\AS{\mathop{\rm AS}}\def\spec{\mathop{\rm spec}}
\def\Ric{\mathop{\rm Ric}}\def\ch{\mathop{\rm ch}}
\def\ch{\mathop{\rm ch}\nolimits}\def\rk{\mathop{\rm rk}}
\def\ev{\mathop{\rm ev}}\def\id{\mathop{\rm id}}
\def\Cli{\mathbb{C}l(1)}\def\w{\wedge}
\def\Diffeo{\mathop{\rm Diffeo}}
\def\curv{\mathop{\rm curv}}

%                  l e t t e r e     g r e c h e 
\def\Fi{\Phi}\def\phi{\varphi}
\def\de{\delta}\def\e{\eta}\def\ep{\varepsilon}\def\ro{\rho}
\def\a{\alpha}\def\o{\omega}
\def\O{\Omega}\def\b{\beta}\def\la{\lambda}
\def\th{\theta}\def\s{\sigma}\def\t{\tau}
\def\g{\gamma}\def\D{\Delta}\def\G{\Gamma}

\def\R{\mathbin{\mathbb R}}
\def\Rn{\R^{n}}\def\C{\mathbb{C}}
\def\Cm{\mathbb{C}^{m}}\def\Cn{\mathbb{C}^{n}}

% Calligraphic and bold abbreviations
\def\cA{\mathcal A}\def\cL{{\mathcal L}}
\def\cO{{\mathcal O}}\def\cT{{\mathcal T}}\def\cU{{\mathcal U}}\def\cI{{\mathcal I}}
\def\cD{{\mathcal D}}\def\cF{\mathcal F}\def\cP{{\mathcal P}}\def\cH{{\mathcal H}}\def\cL{{\mathcal L}}\def\cB{{\mathcal B}}\def\cG{\mathcal G}
\def\cW{\mathcal W}\def\cR{\mathcal R}

% N O R M A   D I    U N    V E T T O R E 
\newcommand{\n}[1]{\left\| #1\right\|}% grande palla B di raggio R
\newcommand{\op}{\mathrm{op}}

%%%%%%%%%%%%%%%%%%%%%%%%%%%%%%%%%%%%%%%%%%%%%%%%%%%%%%%%%%%%%%%%%%%%%%%%%%%%%%%%
%%%%%%%%%%%%%%%%%%%%%%%%%%%%%%%%%%%%%%%%%%%%%%%%%%%%%%%%%%%%%%%%%%%%%%%%%%%%%%%5

% TILDE
\def\Mt{\tilde{M}}
\def\Et{\tilde{E}}
\def\Vt{\tilde{V}}
\def\Xt{\tilde{X}}
\def\N{\mathcal{V}}
\def\Cd{\mathbb{C}^{d}}

%%%%%%        T R A C C E   %%%%%%%%%%%%%%%
\def\tr{\mathop{\rm tr}\nolimits}
\def\trt{\mathop{\rm tr{}_{\tau}}}
\def\StrA{\mathop{ \rm Str_{\mathcal{A}}}\nolimits}
\def\Strt{\mathop{ \rm Str_{\t}}}
\def\SpA{\mathop{ \rm Str^\pm_{\mathcal{A}}}}
\def\TR{\mathop{\rm TR}}\def\trace{\mathop{\rm trace}}
\def\STR{\mathop{\rm STR}}\def\trG{\mathop{\rm tr_\Gamma}}
\def\TRG{\mathop{\rm TR_\Gamma}}\def\Tr{\mathop{\rm Tr}}
\def\Str{\mathop{\rm Str}\nolimits}

\def\Cl{\mathop{\rm Cl}}
\def\Op{\mathop{\rm Op}}\def\supp{\mathop{\rm supp}}
\def\scal{\mathop{\rm scal}}\def\ind{\mathop{\rm ind}}
\def\dim{\mathop{\rm dim}}\def\Ind{\mathop{\rm Ind}\,}
\def\Diff{\mathop{\rm Diff}}\def\T{\mathcal{T}}
\def\Cliff{\mathop*{\rm Cliff}\,}
\def\SL{\mathop{\rm SL}\nolimits}

%D I R A C :
\newcommand\Di{D\kern-7pt/}

% INDEX CLASS
\def\aI{\mathop{\rm Ind_a}}
\def\ds{\displaystyle}

% Evidenziatore 
\definecolor{light}{gray}{.95}
\newcommand{\highlight}[1]{
$\phantom .$
\bigskip
\par\noindent
\colorbox{light}{\begin{minipage}{13.5 cm}#1\end{minipage}}
\bigskip
\par\noindent
}

%%%   COMMENTI, DOMANDE

%\newenvironment{name}[num]{before}{after}
\newenvironment{com}%
{\par \vspace{0cm}\ \\ \noindent 
\%\color{blue}\% }{\%\color{black}\%\vspace{0cm}}

\newcommand{\question}[1]{\color{blue}\vspace{5 mm}\par \noindent
\marginpar{\color{blue}\textsc{Q }} \framebox{\begin{minipage}[c]{0.95
\textwidth} \raggedright \tt #1 \end{minipage}}\vspace{5 mm}\color{black}
\par}
\newcommand{\Komm}[1]{\color{blue}\vspace{2 mm}\par \noindent 
\marginpar{\color{blue} Comment} \framebox{\begin{minipage}[c]{0.95
\textwidth} \raggedright \tt #1 \end{minipage}}\vspace{2 mm}\color{black}\par}

\newenvironment{select}[1]
{\color{blue} \marginpar{#1 $\top$} 
\color{blue} }{
\marginpar{\color{blue}\hspace{.7cm} $\bot$}
\color{black}}

%  CHECKMARK in rosso
\newcommand\ok{\color{red}\checkmark\color{black}}
%%%

%%%%%%%%%%%%%%%%%%%%%%%%%%%%%%%%%%%%%%%%%%%%%%%%%%%%%%%%%%%%%%%%%%%%%%%%%%%%%%
%%%%%%%%%%%%%%%%%%%%%%%%%%%%%%%%%%%
%         E n d    o f    T o p m a t t e r   %%%%%%%%%

\title[Large time limit and local $L^2$-index theorems for families]{Large time limit and\\local $L^2$-index theorems for families}

%\author{Sara Azzali, Sebastian Goette, Thomas Schick}
\author{Sara Azzali}
\address{Sara Azzali, Institut f\"ur Mathematik, Universit\"at Potsdam \\ Am Neuen Palais 10, 14469 Potsdam, Germany}
\email{azzali@uni-potsdam.de}
\thanks{}
\author{Sebastian Goette}
\address{Sebastian Goette\\ Mathematisches Institut\\
Universit\"at Freiburg\\
Eckerstr.~1\\
79104 Freiburg\\
Germany}
\email{sebastian.goette@math.uni-freiburg.de}\thanks{}
\author{Thomas Schick}
\address{Thomas Schick\\ Mathematisches Institut\\
Georg August Universit\"at G\"ottingen\\
Bunsenstr.~3-5 \\37073 G\"ottingen\\
Germany}
\email{thomas.schick@math.uni-goettingen.de}
\thanks{}

\begin{abstract}
We compute explicitly, and without any extra regularity assumptions, the large
time limit of the fibrewise heat operator for Bismut--Lott type
superconnections in the $L^2$-setting.
This is motivated by index theory on certain non-compact spaces (families of manifolds with cocompact group action) where the convergence of the heat operator at large time implies refined $L^2$-index formulas. 

As applications, we prove a local $L^2$-index theorem for families of
signature operators and an $L^2$-Bismut--Lott theorem, expressing the Becker--Gottlieb
transfer of flat bundles in terms of Kamber--Tondeur classes. With slightly
stronger regularity we obtain the respective refined versions: we construct
$L^2$-eta forms and  $L^2$-torsion forms as transgression forms.
\end{abstract}

\maketitle
\tableofcontents

\section{Introduction} \label{intro}
The origin of local index theory is the fundamental observation by Atiyah--Bott and McKean--Singer that the index of an elliptic differential operator is expressed in terms of the trace of its heat operator. 
For Dirac type operators, the heat-kernel's supertrace provides an interpolation between the local geometry, at small time limit, and the index, at large time.
 
The superconnection formalism, developed by Quillen and Bismut, is the essential tool to apply the heat-kernel approach to the analytic theory of families of operators, and naturally furnishes transgression formulas \cite{Q, Bi, BGV}.
Bismut's heat-kernel proof of the Atiyah--Singer family index theorem provides then a fundamental refinement of the cohomological index formula to the level of differential forms: when the kernels form a bundle, the heat operator converges, \emph{as $t\to\infty$}, to an explicit  differential form obtained as the large time limit of the \emph{superconnection Chern character}. The small time limit is the index density, and the local index formula is completed by the transgression term, involving a secondary invariant, the \emph{eta form} of the family \cite{Bi, BC, BF}. 

A parallel result is the Bismut--Lott index theorem for flat vector bundles, proving a Riemann--Roch--Grothendieck theorem for the direct images along a submersion of flat bundles \cite{BL}: it says that the Kamber--Tondeur classes of the fibrewise cohomology twisted by a globally flat vector bundle $F$ are equal to the Becker--Gottlieb transfer of the classes of $F$. The transgression term of the refinement at differential forms level in Bismut--Lott's formula involves the secondary invariant \emph{higher analytic torsion} \cite{BL,Lo}. 

Local index theory was extended to non-commutative families in many different contexts and with a variety of approaches: for example  by Lott to the higher index theory for coverings \cite{Lo3}; by Heitsch--Lazarov and Benameur--Heitsch for foliations with Hausdorff graph in the  Haefliger cohomology context \cite{HL, BH2}; by Gorokhosky--Lott for \'etale groupoids \cite{GL}. 

While the small time limit is in every case the local index density,  it is no longer true that 
the large time limit of the heat operator always gives the index class, as soon as the fibres (or leaves) are non-compact. First of all, the heat operator is in general not convergent as $t\to \infty$; moreover, the index class is in general different from the so called \emph{index bundle},  defined, when the projection onto the fibrewise kernel is transversally smooth, as the Chern character of the corresponding $K$-theory class. 

Heitsch--Lazarov and Benameur--Heitsch investigated this problem for longitudinal Dirac type operators on a foliation, employing the superconnection formalism on Haefliger forms. Their index theorem in Haefliger cohomology applies to longitudinal operators admitting an index bundle, and under the further assumptions that the $(0,\ep)$-spectral projection is transversally smooth and the leafwise Novikov--Shubin invariants are greater than half of the foliation's codimension \cite{HL,BH2}. This regularity ensures that the heat operator converges as $t\to\infty$ to the index bundle, and in particular proves the equality of the Chern character of index bundle and index class \cite{HL, BH2}. 
Benameur, Heitsch and Wahl recently showed an example where this equality does not hold, for a family of Dirac operators whose Novikov--Shubin invariants are just off the regularity condition \cite{BHW}.

The approach of \cite{HL, BH2} to the large time limit, inspired by the work of Gong--Rothenberg on families of coverings \cite{GR}, makes use of the decomposition of the spectrum of the Dirac Laplacian into $\{0\}\cup (0,\ep)\cup [\ep,\infty)$ and hence requires the assumption of smooth $(0,\ep)$-spectral projections, along with lower bounds on the Novikov--Shubin invariants.

To ask for transversal smoothness of the $(0,\ep)$-spectral projection is
a very strong condition, and difficult to be verified in the case of the
geometrically most relevant operators.

On the other hand, if we focus on the Laplacian, which is the square of the Euler  and signature operators, it is known that it has 
some intrinsic regularity, coming from the topological nature of its kernel.
Even in the non-compact settings of coverings and measured foliations, one can usually translate this into a nice behaviour of the large time limit of its heat operator:  for instance this is exploited by Cheeger and Gromov in the proof of the metric independence of the $L^2$-rho invariant of the signature operator \cite[(4.12)]{CG2}.
This fact suggests that for a family of longitudinal Laplacians it should be possible to prove the convergence of the heat operator at the large time limit without assuming extra regularity conditions.

Motivated by this idea, in this paper we investigate the large time limit of the heat operator for families of Euler and signature operators in the $L^2$-setting of \emph{families of normal coverings}. We consider this as a first step to understand more general foliated manifolds. 

Given a smooth fibre bundle $p\colon E\to B$ with compact fibre $Z$, a family of normal coverings $(\widetilde E, \G)\to B$ consists of a bundle of discrete groups $\G\to B$ and of a covering $\pi\colon\widetilde E\to E$ such that ~$\pi_b\colon\widetilde E_b\to E_b$ is a normal covering
with group of covering transformations~$\Gamma_b$ for all~$b\in B$. The family under consideration is then the Euler operator $d^Z+d^{Z,*}$, where $d^Z$ is the fibrewise de Rham operator twisted by a globally flat bundle $\mathcal F\to E$ of $\mathcal A$-Hilbert modules, for a given finite von Neumann algebra $\mathcal A$. 

Our first result, Theorem \ref{HeatThm2}, is the explicit computation of the limit as $t\to \infty$ of the heat operator for  the Bismut--Lott superconnection, paired with the finite trace $\t$ on $\mathcal A$, without assuming any regularity hypothesis on the spectrum.   As applications, in Section \ref{secL2indthm} we prove the $L^2$-Bismut--Lott index theorem for flat bundles and the $L^2$-index theorem for the family of signature operators. In particular we show that, for the signature operator in the $L^2$-setting, the Chern character of index class and index bundle have the same pairing with the trace $\t$: hence our equality points out the special behavior of the signature operator with respect to the general question by Benameur--Heitsch of the comparison of these two objects characters on a foliation \cite{BH2, BHW}.

We realize the computation of the large time limit using two main ingredients. The first one is a fundamental observation due to Bismut and Lott \cite{BL, Lo2}: the superconnection adapted to the longitudinal signature operator is given by $\mathbb A=\frac{1}{2}(d^E+d^{E,*})$,  where $d^{E,*}$ is the adjoint superconnection of $d^E$. Since $d^E$ is flat, the curvature $\mathbb A^2$ has another \lq\lq square root", the operator $\mathbb{X}=\frac{1}{2}(d^{E,*}-d^{E})$ which satisfies $\mathbb A^2=-\mathbb X^2$. The operator $\mathbb X$  does not involve transversal derivatives because it is the difference of two superconnections, and we exploit this property very carefully in the Duhamel expansion of the heat operator $e^{\mathbb{X}_t^2}$.
The second ingredient is a new method of estimating the terms of the perturbative expansion of $e^{\mathbb{X}_t^2}$, developed in Section \ref{heat-kernel}.

Our technique only applies to the case of families of Euler and signature operators twisted by globally flat bundles of $\cA$-modules, because we use deeply the fact that $d^E$ is flat, and the existence of the operator $\mathbb X$. On the other hand, we believe that our estimates can be applied almost immediately to foliations, at least taking the point of view of Haefliger forms, where the trace is defined by a local push forward, using the local structure of fibration \cite{HL1, HL, BH2}.

The next result, in Section \ref{refined}, is the construction of the $L^2$-eta and $L^2$-torsion forms as transgression forms. To this aim, we implement the estimates on the Duhamel expansion, and prove that
these $L^2$-eta and $L^2$-torsion are well defined if the fibre is of
determinant class and $L^2$-acyclic, or if the Novikov--Shubin invariants are
positive. Under these assumptions we even  prove differential form refinements
of the $L^2$-index theorems.  Compared to the construction of the
$L^2$-torsion form  by Gong and Rothenberg \cite{GR}, our approach does not
need the smoothness of the spectral projection $\chi_{(0,\ep)}(D)$, and holds
for families of manifolds of determinant class (provided they are
$L^2$-acyclic). This is indeed an improvement, as recently Grabowski proved that there exist closed manifolds with Novikov--Shubin
invariant equal to zero \cite{Gra} (which are of determinant class by \cite{Sc2}).

In the last Section, we investigate the properties of the $L^2$-rho form of the signature operator.

\section{Setup}

In this section we describe the different situations we consider 
as $L^2$-settings for geometric families and which we treat in the paper.
In \ref{Sect.NCF}, we consider normal coverings of a fibre bundle, and therefore we work on families with coefficients in a flat bundle of finitely generated Hilbert $\mathcal A$-modules, where $\cA$ is a finite von Neumann algebra.
In \ref{Sect.FNC} we generalize to families of normal coverings.

\subsection{Normal coverings of fibre bundles}\label{Sect.NCF}
Let $\tilde p\colon\widetilde E\rightarrow B$ be a 
%Riemannian submersion,
smooth fibre bundle,
and let $\G$ act fibrewise freely and properly discontinuously on $\tilde E$
such that the fibres of $p\colon E=\widetilde E/\G\rightarrow B$ are compact.
%In particular, $\G$ is a quotient of the fundamental group of the fibres. (% only if the fibres are connected!)
Let~$\pi$ denote the quotient map~$\widetilde E\to E$. We call this setting a \emph{normal covering of the fibre bundle $p\colon E\rightarrow B $}.
%Let $Z \to E\stackrel{p}{\to} B$ be a smooth fibre bundle with connected closed fibers $Z_b=\pi^{-1}(b)$ of dimension $n$.  Let $\pi\colon \widetilde E\to E$ be a normal $\G$-covering of $E$.
%

Let~$\cA$ be a finite von Neumann algebra with involution~${}^*$,
and let~$\tau\colon\cA\to\C$ be a finite, faithful, normal trace.
Let~$\ell^2(\cA)$ be the completion of $\mathcal A$ with respect to the scalar
product~$\langle\cdot,\cdot\rangle$ with~$\langle a,b\rangle=\tau(b^*a)$.

A right Hilbert $\mathcal A$-module is a Hilbert space~$M$ with a continuous
right $\cA$-action that admits an $\cA$-linear isometric
embedding into~$\ell^2(\cA)\otimes H$ for some Hilbert space~$H$;
this embedding is not part of the structure.
$M$ is \emph{finitely generated} if one can choose a finite-dimensional~$H$.
A {\em right Hilbertian $\mathcal A$-module\/}
is a topological vector space~$M$ with a continuous
right $\cA$-action such that there exists compatible scalar products
on~$M$ that turn~$M$ into a right Hilbert module.
Every such scalar product is called {\em admissible.\/}

\begin{rem}\label{MetricRem}
  If~$\langle\cdot,\cdot\rangle$ is an admissible scalar product,
  then all other admissible scalar products are
  of the form~$\langle S\cdot,\cdot\rangle$,
  where~$S$ is a self-adjoint, positive, invertible endomorphism of~$M$
  that commutes with the action of~$\cA$.
  In particular,
  the space of admissible scalar products is always contractible.
  All admissible scalar products give rise to isomorphic Hilbert modules,
  but the corresponding isomorphisms are not canonical.
\end{rem}

We denote with $\mathcal{B}_\mathcal{A}(M)$ the von Neumann algebra of bounded
$\mathcal A$-linear operators on $M$. The (unbounded) trace on $\mathcal{B}_\mathcal{A}(M)$ induced by $\t$ and by the usual trace on $\mathcal{B}(H)$ is denoted by  $\trt$, and $\mathcal{B}^1_\mathcal{A}(M)$ is the \emph{ideal of trace class operators}.

\subsubsection{Flat bundles of $\mathcal A$-modules}
\label{fbA}
We now fix a finitely generated Hilbertian $\Gamma$-$\cA$ bimodule~$M$,
in other words, a finitely generated right Hilbertian module
that admits a commuting $\Gamma$-action from the left.
We then consider the bundle of $\cA$-modules
\begin{equation}\label{coeff-bundle}
  \cF=\widetilde E\times_\Gamma M\to E\;.
\end{equation}
This bundle comes equipped with a natural flat $\cA$-linear
connection~$\nabla^{\cF}$.
The space of $\cF$-valued
smooth differential forms~$\Omega^\bullet(E;\cF)$
becomes a cochain complex with differential the usual extension of $\nabla^{\cF}$ to forms, which we will denote $d^E$ as in \cite{BL}.

Let $TZ$ be the vertical tangent bundle of $p$. We  fix a horizontal subbundle~$T^HE\subset TE$
such that~$TE=TZ\oplus T^HE$. If $U$ is a smooth vector field on $B$, let $\bar U\in C^\infty(E, T^HE)$ denote its horizontal lift so that $\pi_*\bar U=U$, and $P^{TZ}$ be the projection from $TE$ to $TZ$. 
This defines an isomorphism
\begin{equation}\label{dEdef}
  \Omega^\bullet(E;\cF)
	\cong\Omega^\bullet\bigl(B;\Omega^\bullet(E/B;\cF)\bigr)\ .
\end{equation}
Let $\mathcal W\to B$ be the smooth infinite-dimensional $\mathbb Z$-graded
bundle over $B$ whose fibre is $\mathcal W^\bullet_b=C_0^\infty(Z_b,
(\Lambda^\bullet (T^*Z)\otimes \mathcal F){}_{Z_b})$, the compactly supported fibrewise smooth differential forms.
 
 Let $g^{TZ}$ be a vertical metric, and $g^{\cF}$ a smooth family of admissible scalar products on the bundle~$\cF\to E$.
This induces a family of $L^2$-metrics~$g^{\mathcal W}$
on $\mathcal W$. %the infinite dimensional bundle~$\Omega_0^\bullet(E/B;\cF)\to B$
%of compactly supported fibrewise smooth differential forms.
The fibrewise $L^2$-completion of~$\mathcal W$
is denoted~$\Omega_{L^2}^\bullet(E/B;\cF)\to B$.
As a Hilbert space,
it is isomorphic to the Hilbert tensor
product~$\Omega^\bullet_{L^2}(E/B)\otimes M$,
and the topology of~$\Omega_{L^2}^\bullet(E/B;\cF)\to B$
is independent of the choice of admissible metrics above.
Thus we can regard~$\Omega_{L^2}^\bullet(E/B;\cF)\to B$
as a locally trivial bundle of Hilbertian $\cA$-modules,
with a family of admissible metrics~$g^{\mathcal W}$.
%Since $\Omega_{L^2}^\bullet(E/B;\cF)$ is not a $C^\infty$ bundle, 
We will define connections and do analysis on the subbundle $\mathcal W$.
%, which will be denoted $\mathcal W\to B$ in the following.

The fibrewise derivative~$d^Z=\nabla^{\cF}|_{TZ}$
becomes an unbounded operator
\begin{equation*}
  d^Z\colon\Omega_{L^2}^k(E/B;\cF)
	\longrightarrow\Omega_{L^2}^{k+1}(E/B;\cF)
\end{equation*}
which can be seen as an element of $C^\infty(B, \Hom (\mathcal W^\bullet, \mathcal W^{\bullet+ 1}))$.

As shown in \cite[III.(b)]{BL}, the connection~$d^E=\nabla^{\cF}$ now becomes a flat $\cA$-linear
superconnection
\begin{equation}\label{dE}
  d^E=d^Z+\nabla^{\mathcal W}+\iota_T
\end{equation}
of total degree~$1$ on the bundle~$\mathcal W \to B$.
Here $\nabla^{\mathcal W}:=\mathcal{L}{}_{\,\bar{\dot}\,}$ is the Lie derivative
with respect to horizontal lifts,
~$T$ is the fibre bundle curvature of~$T^HE$ defined by $T(U, V)=-P^{TZ}[\bar U, \bar V]$, and $\iota_T$ is the interior multiplication by $T(\cdot, \cdot)$. 

\subsubsection{The fibrewise $L^2$-cohomology}\label{L2coh}
We consider the {\em reduced $L^2$-cohomology\/}
\begin{equation}\label{H2}
  H_{L^2}^\bullet(E/B;\cF)
	=\Ker d^Z/\overline{\Im d^Z}
\end{equation}
and we obtain therefore a bundle $\mathcal A$-Hilbert modules \cite[1.4.2]{Lu}.

Because~$d^E%=\nabla^{\cF}
$ is a flat superconnection, the connection~$\nabla^{\mathcal W}$ induces
a connection~$\nabla^{\Ker d^Z}$ on the bundle~$\Ker d^Z\to B$
such that~$\Im d^Z$ is a parallel subbundle (as in \cite[p. 307]{BL}).
If the bundle~$E\to B$ is trivial and~$g^{TZ}$, $g^{\cF}$
and~$T^HE$ are of product type,
then~$\overline{\Im d^Z}$ is clearly also parallel,
and~$\nabla^{\Ker d^Z}$ induces a flat $\cA$-linear connection~$\nabla^H$
on the bundle~$H^\bullet_{L^2}(E/B;\cF)$.
%Because
%the topology on~$\Omega(E/B;\cF)$ does not depend on the choice
%of~$g^{TZ}$, $g^{\cF}$ and~$T^HE$
%and the connection~$\nabla^{\ker d^Z}$ is independent on~$T^HE$ modulo
%sections of~$\Im d^Z$,
As in Bismut--Lott \cite{BL}, it turns out that the connection~$\nabla^H$ is well-defined
and independent of the choices of~$g^{TZ}$, $g^{\cF}$
and~$T^HE$ in the case that~$E\to B$ is a product bundle.
Because the bundle~$E\to B$ is assumed to be locally trivial,
we obtain naturally a reduced \emph{Gau\ss--Manin connection}~$\nabla^H$
on the reduced $L^2$-cohomology $H^\bullet_{L^2}(E/B;\cF)\to B$.
This connection is still $\cA$-linear and flat.

The operator~$(d^Z+d^{Z,*})^2$ is the fibrewise Hodge-Laplacian,
and by Hodge theory the reduced $L^2$-cohomology is given by
\begin{equation}\label{isoHodge2}
  H^\bullet_{L^2}(E/B;\cF)\cong\ker(d^Z+d^{Z,*})=\ker(d^Z+d^{Z,*})^2
  \subset\Omega^\bullet_{L^2}(E/B;\cF)\;.
\end{equation}
Restriction of~$g^{\mathcal W}$
to~$H^\bullet_{L^2}(E/B;\cF)$ thus defines
an $L^2$-metric~$g^H_{L^2}$.
As the restriction of an admissible metric to 
an $\cA$-invariant
subbundle,
the metric~$g^H_{L^2}$ is also admissible.
Moreover the fibres of~$H^\bullet_{L^2}(E/B;\cF)$ are finitely
generated as Hilbertian $\cA$-modules by \cite{Sh}.

\begin{ex}\label{ex1.1}

  Let $\pi \colon \tilde E\to E$ be a normal $\G$-covering of the fibre bundle $p\colon E\to B$ as in Section \ref{Sect.NCF}.
  Let~$\ell^2(\G)$ be the completion of the group ring~$\C\G$
  with respect to the standard $L^2$-scalar product.
  The group von Neumann algebra~$\mathcal N(\Gamma)$ of~$\Gamma$
  consists of all bounded operators on~$\ell^2(\Gamma)$ that commute
  with the left regular representation of~$\G$.
  It contains~$\C\G$ as a weakly dense subset,
  and on~$\C\G$,
  the canonical trace~$\tau$ is given by
  \begin{equation*}
    \tau\Bigl(\sum a_\gamma\,\gamma\Bigr)=a_e\;.
  \end{equation*}
  Then~$M=\ell^2(\Gamma)$ is a finitely generated Hilbertian $\G$-$\mathcal N \G$-
  bimodule, indeed $M\simeq l^2(\mathcal N\G)$.

  We fix a fibrewise Riemannian metric~$g^{TZ}$ on~$TZ$.
  Because the standard $L^2$-scalar product on~$\ell^2(\Gamma)$
  is $\G$-invariant,
  it defines a natural family of admissible scalar products~$g^{\cF}$
  on~$\cF=\tilde E\times_\G \ell^2(\G)$.
  We now have a natural $\mathcal N(\G)$-linear
  isometric isomorphism
  \begin{equation*}
    \Omega_{L^2}^\bullet(\widetilde E/B)
    \cong\Omega_{L^2}^\bullet(E/B,\cF)
  \end{equation*}
  that is compatible with the flat superconnection~$d^E$ of~\eqref{dEdef}.
  In particular,
  the flat Hilbertian $\mathcal N(\G)$-module
  bundle~$H^\bullet_{L^2}(E/B;\cF)\to B$
  with the Gau\ss--Manin connection is isomorphic to the
  fibrewise $L^2$-cohomology of the normal covering~$\widetilde E\to E$.
\end{ex}

\begin{ex}\label{ex1.2}
  If~$\Gamma$ acts on a vector space~$V$,
  there exists a flat vector bundle
  \begin{equation*}
    F=\widetilde E\times_\G V\;.
  \end{equation*}
  We now consider~$M=V\otimes\ell^2(\Gamma)$ with the diagonal $\G$-action.
  A $\G$-invariant metric on~$\pi^*F=\Et\times V\to E$ defines a family
  of admissible metrics~$g^{\cF}$
  on~$\cF=\widetilde E\times_\G M$
  and a metric~$g^F$ on~$F\to E$.
  We have a natural isometry of bundles of Hilbertian
  $\mathcal N(\G)$-modules
  \begin{equation*}
    \Omega^\bullet_{L^2}(\widetilde E/B;\pi^*F)
    \cong\Omega^\bullet(E/B,\cF)
  \end{equation*}
  that is compatible with~$d^E$ as above.
%  Note that this complex is isomorphic to the one in Example~\ref{ex1.1}
%  as a complex of Hilbertian spaces if~$\rk V=1$,
%  but in general, the $\mathcal N(\Gamma)$-action is different,
%  and hence also the set of admissible metrics.
\end{ex}

\subsection{Families of normal coverings}\label{Sect.FNC}
Let~$p\colon E\to B$ be a smooth proper submersion,
and assume that there exists a covering\footnote{not necessarily normal.} map~$\pi\colon\widetilde E\to E$
such that over each point~$b\in B$, the map~$\pi_b\colon\widetilde E_b\to E_b$
is a normal covering.
Then the groups of covering transformations form a locally trivial bundle of discrete groups over~$B$
that is in general nontrivial.

\begin{dfn}\label{def1.1}
  A {\em family of normal coverings\/}~$(\widetilde E,\G)\to B$
  of~$p\colon E\to B$ consists of a bundle of discrete groups~$\Gamma\to B$
  and a covering~$\pi\colon\widetilde E\to E$
  such that~$\pi_b\colon\widetilde E_b\to E_b$ is a normal covering
  with group of covering transformations~$\Gamma_b$ for all~$b\in B$
  in a continuous way.
\end{dfn}
\begin{ex}\label{Bundleoftori}
A typical non-trivial example for this situation arises from a flat vector
bundle $\tilde E\to B$ with structure group $\SL_n(\mathbb{Z})$.
Any such a flat vector bundle is associated to a principal
$\SL_n(\mathbb{Z})$-bundle $P\to B$. The action of $\SL_n(\mathbb{Z})$ on
$\mathbb{Z}^n$ by group automorphisms also gives rise to an associated
non-trivial bundle of groups $\Gamma\to B$ with fibers isomorphic (in a non-canonical way) to
$\mathbb{Z}^n$. The fibers of $\Gamma$ act
in a canonical way on the fibers of $\tilde E$ by deck transformations. The fiberwise
quotient produces a (in this case flat) bundle of tori, the one 
associated in the canonical way to $P$.
\end{ex}
\begin{dfn}\label{def1.2}
  Let~$(\widetilde E,\G)\to B$ be a family of normal coverings
  of~$p\colon E\to B$,
  and let~$\cA\to B$ be a locally trivial family of von Neumann
  algebras over~$B$ with discrete structure group.
  A {\em family of Hilbertian $\G$-$\cA$-bimodules\/}
  is a locally trivial family of Hilbertian spaces~$M\to B$ with discrete
  structure group such that~$M_b$
  is a Hilbertian $\G_b$-$\cA_b$-bimodule for all~$b\in B$
  in a continuous way.
  We say that~$M$ is a {\em family of finitely generated
  Hilbertian $\G$-$\cA$-bimodules\/} if~$M_b$ is a finitely
  generated Hilbertian $\cA$-module for all~$b\in B$.
\end{dfn}

In both definitions,
``in a continuous way'' means that over $B$ we have local trivialisations of all the structure, including $\G_b\to \Diffeo (E_b)$. 
% over small
%open subsets of~$B$,
%the respective action is trivial, too.
For such actions,
we will henceforth simply write
``$\G$-actions'' and ``$\cA$-actions'' unless this could cause confusion.
Because we have fixed discrete structure groups,
both~$\cA\to B$ and~$M\to B$ are equipped with
natural flat connections.

Let~$M$ be a family of finitely generated Hilbertian $\G$-$\cA$-bimodules
and consider
\begin{equation}\label{coeff-bundle2}
  \cF=\widetilde E\times_\G M\longrightarrow E\;,
\end{equation}
which is equipped with a natural $p^*\cA$-action
and a $p^*\cA$-linear flat connection~$\nabla^{\cF}$.
Here ``$p^*\cA$-linearity'' means that
\begin{equation*}
  \nabla^{\cF}(as)=\bigl(\nabla^{p^*\cA}a\bigr)\cdot s+a\cdot\nabla^{\cF}s
\end{equation*}
for all sections~$a$ of~$p^*\cA\to E$ and~$s$ of~$\cF$.

Because locally
we are in the same situation as in Section~\ref{Sect.NCF},
we can repeat all constructions as before.
Thus, we construct a family of Hilbertian
$\cA$-modules $\Omega^\bullet_{L^2}(E/B;\cF)\to B$
carrying a flat superconnection~$d^E$ and
define the fibrewise reduced
$L^2$-cohomology $H^\bullet_{L^2}(E/B;\cF)\to B$.
Again, this is a family of finitely generated Hilbertian $\cA$-modules.

\begin{rem}
The reader should keep in mind that all analytic manipulations take place
along the single fibre of $p$, and will depend only on the geometry of this
fibre inside $E$.
Hence, the two settings of Sections \ref{Sect.NCF} and \ref{Sect.FNC} work equally well.
\end{rem}

%\subsection{Main examples}\label{Sect1.3}
%We show how the standard examples of flat $\G$ -equivariant vector bundles
%over $\G$-normal coverings fit into the framework of subsection~\ref{Sect.NCF}.

\begin{rem}Let~$p\colon E\to B$ be an arbitrary smooth proper submersion
with fibre~$Z$.
From the long exact sequence
\begin{equation*}
  \begin{CD}
    \pi_2(B)@>>>\pi_1(Z)@>>>\pi_1(E)@>>>\pi_1(B)
  \end{CD}
\end{equation*}
we see that in general
there exists no normal subgroup of~$\pi_1(E)$ with quotient~$\pi_1(B)$.
In particular,
it is in general not possible to take the fibrewise universal covering
globally.
\end{rem}
\begin{ex}\label{ex1.3}
%  Assume that~$p\colon E\to B$ admits
%  a covering~$\pi\colon\widetilde E\to E$
%  such that~$\pi_b\colon\widetilde E_b\to E_b$ is a normal covering
%  with group~$\Gamma_b$ of covering transformations for all~$b\in B$.
  If~$p$
  admits a section~$e_0\colon B\to E$ then such a covering can be constructed.
  In this case, we consider the fibrewise universal covering $\widetilde E$,
  which consists of all fibrewise paths starting
  at~$e_0$ up to fibrewise homotopies that preserve the endpoints.
  Then~$\Gamma_b\cong\pi_1(E_b,e_0(b))$ is the family of fibrewise
  fundamental groups with respect to~$e_0$.
  Note that~$\Gamma_b$ is not necessarily a trivial bundle,
  and that different sections~$e_0$ can produce non-isomorphic coverings, as in the following case.
 
 Consider for instance a closed oriented surface $F_3$ of genus $3$, which we
 regard as the gluing $ F_3=S_1\cup_{\gamma} S_2$ along a circle $\gamma$ of a surface $S_1$ of genus $1$ with one boundary component $\gamma$ together with a surface $S_2$ of genus two with one boundary component $\gamma^{-1}$.
 Let $\a\colon F_3\to F_3$ be the Dehn twist on a tubular neighborhood of $\g$, and let $p\colon \mathcal T_\a\to S^1$ be the mapping torus fibre bundle over $S^1$ with $\mathcal T_\a=\left(F_3\times [0,1]\right)/\sim$, where $(x, 0)\sim (\a(x),1)$. Let $P_1\in S_1$ and $P_2\in S_2$ be two base points on $F_3$, both fixed by $\a$. Then $p$ has the two global sections given by $s_i([t])=[(P_i, t)]$, for $[t]\in S^1$, $i\in 1,2$. We can form two bundles $\widetilde E^i\to S^1$, $i=1,2$, of fibrewise universal coverings as explained above. The corresponding bundles of groups $\G^i\to S^1$, $\G^i_{[t]}=\pi_1(\mathcal T_\a{}_{[t]}, P_i)$, are the mapping tori of the maps $\a_*^i\colon \pi_1(F_3, P_i)\to  \pi_1(F_3, P_i)$. It is not difficult to see that $(\widetilde E_1, \G_1)$ and $(\widetilde E_2, \G_2)$ are not isomorphic: indeed there exists no isomorphism between $\pi_1(F_3, P_1)$ and $\pi_1(F_3, P_2)$ that intertwines $\a_*^1$ and $\a_*^2$, as they fix subgroups of different rank.

  As in Example~\ref{ex1.1},
  we can form a family~$\mathcal N(\Gamma)\to B$ of von Neumann algebras
  and a family~$\ell^2(\Gamma)\to B$ of finitely generated Hilbertian
  $\G$-$\cA$-bimodules.
  Since the situation is locally isomorphic to Example~\ref{ex1.1},
  we may then proceed as above.
\end{ex}

\section{\texorpdfstring{$L^2$}{L2}-invariants for families: superconnections, heat operator}
%analytic tools
\label{Sec:tools}

In this Section, we introduce our two problems and set up a unified formalism to treat both with similar methods in the rest of the paper.

Let $Z\to E\stackrel{p}{\to}B$ be a smooth fibre bundle with connected
$n$-dimensional closed Riemannian fibres, let $\mathcal F\to E$ be a bundle of
$\mathcal A$-modules as in \eqref{coeff-bundle} or \eqref{coeff-bundle2}.
Let $\{e_i\}_{i=1}^n$ be a local orthonormal framing of $TZ$. Exterior
multiplication by a form $\phi$ will be denoted by $\phi \w$, interior
multiplication by a vector $v$ will be denoted $i_v$.
As usual, we identify vertical tangent and cotangent vectors using the
fibrewise Riemannian metric and we denote for a vertical vector $X$
\begin{equation*}\label{Cliff}
c(X)=(X\w ) - i_X\;,\;\;\;\; \hat c(X)=(X\w ) + i_X
\end{equation*}
and put 
$$c^i=c(e_i) \;,\;\;\;\;  \hat c^i=\hat c(e_i)$$
 which generate two
graded-commuting Clifford module structures on forms (for the bundle of
Clifford algebras associated to the vertical tangent bundle), compare \cite[III.c]{BL}.

Let $N$ denote the \emph{number operator} on vertical forms, acting as $N\phi=p\phi$, for $ \phi \in C^\infty(E, \Lambda^p(T^*Z))$. We have then $\sum_{i=1}^n c^i \hat c^i=2N-n$.

\subsection{Trace norm and spectral density function}
Let $\End^1_{\mathcal A} \mathcal{W}\rightarrow B$ denote the bundle whose sections are families of $\tau$-trace class operators.
%Smooth sections $T\in C^\infty(B, \End^1_{\mathcal A} \mathcal{W})$ are those whose Schwartz kernel is in $C^\infty(E\times_\pi E, (\Lambda T^*(E/B)\otimes \mathcal F)\otimes_\pi  (\Lambda T^*(E/B)\otimes \mathcal )F )^*$
%If $T\in C^\infty(B, \End^1_{\mathcal A} \mathcal{W})$, then $\trt (T)\in C^\infty(B)$, and if $T\in \O(B, \End^1_{\mathcal A} \mathcal{W}) $, then $\trt T\in\O(B)$.
Equip the ideal $\End^1_{\mathcal A} \mathcal{W}$ with the norm 
$$
\n{A}_\t:=\trt(|A|)\ .
$$

Let $D=\frac{1}{2}(d^{Z,*}-d^Z)$ (note that, here and throughout the whole paper, $D$ is a skew-adjoint operator and $D^2\leq 0$),  and let $e^{tD^2}$ be the fibrewise heat operator associated to the Hodge Laplacian. The following proposition by Gong and Rothenberg is fundamental for what follows. 
\begin{prop}\cite{GR}
\label{SmoothProp} Let $P=(P_b)_{b\in B}$ be the family of projections onto $\ker D$. Then
$$
P\in C^{\infty} (B;\End{}_{\mathcal A}^1\mathcal{W})
\;\text{ and }\;e^{tD^2}\in C^{\infty}( B, \End{}_{\mathcal A}^1\mathcal{W}) \ .$$
\end{prop}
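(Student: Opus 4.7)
The plan splits into treating the heat operator and the kernel projection separately; the former is standard heat-kernel theory adapted to $\mathcal A$-coefficients, while the latter is delicate because $0$ need not be isolated in $\spec D_b^2$.

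For $e^{tD^2}$: the operator $D_b^2=-\tfrac14\Delta^Z_b$ is formally self-adjoint and $\mathcal A$-linearly elliptic along the compact fibre $Z_b$. A Minakshisundaram--Pleijel / Duhamel parametrix construction produces a smooth heat kernel $k^b_t(x,y)\in\Hom_{\mathcal A}(\mathcal F_y,\mathcal F_x)$ whose fibrewise $\tau$-trace
\[
\trt(e^{tD_b^2})=\int_{Z_b}\tau(\tr k^b_t(x,x))\,d\vol
\]
is finite by compactness of $Z_b$. Smoothness in $b$ follows by locally trivialising $p\colon E\to B$: the defining data of $D_b$ (vertical metric $g^{TZ}$, horizontal distribution $T^HE$, flat connection $\nabla^{\mathcal F}$) vary smoothly in $b$, and standard smooth parameter-dependence of the heat kernel, unchanged in the $\mathcal A$-setting, yields the claim.

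For $P$: the $\tau$-trace-class property is immediate from the Hodge identification \eqref{isoHodge2}, $\ker D_b\cong H^\bullet_{L^2}(E_b;\mathcal F_b)$, a finitely generated Hilbertian $\mathcal A$-module, so that $\trt(P_b)=\dim_\tau\ker D_b<\infty$. Smoothness of $P$ in $b$ is the core difficulty, since $0$ need not be isolated in $\spec D_b^2$ and $P$ cannot be extracted by a Cauchy integral of the resolvent. My plan is to exploit the topological invariance of $\dim_\tau\ker D_b$ --- locally constant on $B$ --- to build local frames for the kernel bundle: near $b_0\in B$, choose smooth sections $\tilde s_1,\dots,\tilde s_k$ of $\Omega^\bullet_{L^2}(E/B;\mathcal F)$ whose values at $b_0$ generate $\ker D_{b_0}$ as an $\mathcal A$-module, and define
\[
s_i(b)=\lim_{t\to\infty}e^{tD_b^2}\tilde s_i(b).
\]
These should span $\ker D_b$ for $b$ in a neighbourhood of $b_0$, and $P_b$ is recovered from the $s_i(b)$ by a Moore--Penrose type projection formula in the $\tau$-trace-class category.

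The main obstacle is to control the long-time limit defining $s_i(b)$ without a spectral gap at $0$, since in general $e^{tD^2_b}$ does not converge to $P_b$ in trace norm. The resolution uses the smoothness of $e^{tD^2}$ from the first step together with the fact that $\tilde s_i(b_0)$ is already harmonic, so that a Duhamel comparison of $D_b$ with $D_{b_0}$ controls the decay of the non-kernel component of $e^{tD^2_b}\tilde s_i(b)$ uniformly on small neighbourhoods of $b_0$. This is essentially the strategy of Gong--Rothenberg~\cite{GR}.
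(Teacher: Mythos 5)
The paper itself offers no proof --- it just cites [GR, Lemma~2.2 and Theorem~2.2] and [BH4, Theorem~4.4] --- so I am comparing your proposal with the cited arguments rather than with the paper's own text. Your handling of $e^{tD^2}$ is standard and correct (fibrewise parametrix, $\tau$-trace class from compactness of $Z$ together with the finite $\tau$-rank of $\cF$, smooth parameter dependence), and the $\tau$-trace-class statement for $P$ via the finitely generated Hodge module is also fine. The real content of the proposition, smoothness of $P$ in $b$, is where your argument has a genuine gap, and you are right to sense that this is the delicate part; but the Duhamel comparison you invoke does not close it. Setting $s_i(b)=\lim_{t\to\infty}e^{tD_b^2}\tilde s_i(b)=P_b\tilde s_i(b)$, the only a priori estimate from $\n{e^{tD_b^2}}_{\op}\le 1$ is $\n{e^{tD_b^2}\tilde s_i(b)-s_i(b)}\le\n{(1-P_b)\tilde s_i(b)}$, and to bound the right-hand side by $O(|b-b_0|)$ one would already need operator-norm Lipschitz continuity of $b\mapsto P_b$, which is precisely what is to be proven --- the argument is circular. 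The direct Duhamel expansion $e^{tD^2_b}\tilde s_i(b_0)-\tilde s_i(b_0)=\int_0^t e^{sD_b^2}\,D_b^2\tilde s_i(b_0)\,ds$ gives only a bound linear in $t$; the rate at which it actually converges is governed by the spectral density of $D_b^2$ near $0$, i.e.\ by Novikov--Shubin type behaviour, which is not smooth in $b$ and (as in the example of \cite{BH5}) can even be discontinuous. Likewise, nothing forces $\partial_b\bigl(e^{tD_b^2}\tilde s_i(b)\bigr)$ to converge as $t\to\infty$, so one cannot conclude $C^1$-ness of $s_i$, let alone $C^\infty$.

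The arguments of [GR] and [BH4] do not try to differentiate the large-time limit of the heat flow. They exploit the topological nature of reduced $L^2$-cohomology: locally over $B$ one fixes a (locally constant) triangulation of the fibre, producing a finitely generated free $\cA$-cochain complex $(C^\bullet,\delta)$ whose differential and combinatorial harmonic projection are locally \emph{constant} in $b$, together with smoothly varying de Rham and Whitney chain maps and a smooth chain homotopy between $\Omega^\bullet_{L^2}(E/B;\cF)$ and $(C^\bullet,\delta)$. The projection $P$ is then obtained algebraically from this finite-rank, smoothly varying data (combined with a single fixed-time smoothing $e^{t_0D^2}$), which bypasses the absence of a spectral gap entirely. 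That is exactly why the proposition holds with no hypotheses on the Novikov--Shubin invariants --- a point the paper is at pains to stress --- and it is the ingredient your proposal is missing. Attributing your heat-flow strategy to Gong--Rothenberg is therefore not accurate.
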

\begin{proof}
Proved in \cite[Lemma 2.2 and Theorem 2.2]{GR}, see also \cite[Theorem 4.4]{BH4}. 
\end{proof}
We denote
\begin{equation}
\label{NStheta}
\theta_b(t):=\trt (e^{tD_b^2}-P)\end{equation}
or simply $\theta(t)$, as the dependence on the base point will not be crucial.
By results of Gromov--Shubin \cite{GS}, the dilatation class of $\theta$ as $t\rightarrow \infty$ is known to be a homotopy invariant: if $b, \bar b \in B$, then $\exists C_b$ with $\theta_b(C_b t)\leq \theta_{\bar b}(t)\leq \theta_b(\frac{t}{C_b})$. Moreover, in the proofs in \cite{GS} and \cite{Ef} one can choose constants $C_b$ (constructed from the chain homotopy equivalence) depending in a continuous way on $b\in B$. This implies that there is uniformity on compact subsets of $B$.

It is clear that  
$ \lim_{t\rightarrow \infty}\theta_b(t)=0$.
More precisely, we even have, uniformly on compact subsets of $B$,
$$
\lim_{t\rightarrow \infty}e^{tD^2}=P\in \O^\bullet(B;\End{}_{\mathcal A}^1\mathcal{W})
$$
in the trace norm. 
The operators $e^{tD^2}$ strongly converge to $P$. However, if $0$ is in the continuous spectrum of $D$, then $e^{tD^2}$ does not converge in the operator norm topology.
  
  \bigskip

Corresponding to the different push-forward theorems we want to prove, we will have two types of flat bundles $\mathcal F$ of $\mathcal A$-modules.
\begin{description}
	\item[Setting A] $\mathcal F\to E$ is a flat bundle of $\mathcal
          A$-modules. This is the setting to prove the $L^2$-version of the
          Bismut--Lott theorem and to construct the $L^2$-torsion form.
	\item[Setting B] $\mathcal F\to E$ is a \emph{flat duality bundle} of $\mathcal A$-modules. This is the natural setting to discuss the signature operator, as observed in \cite{Lo2}.
\end{description}

\subsection{Flat bundles, $L^2$-Kamber--Tondeur classes}
\label{flKT}
\subsubsection{$L^2$-Kamber--Tondeur classes}
\label{L2KamTon}
Let $\cF\rightarrow E$ be a flat bundle of $\mathcal A$-Hilbert modules, with flat connection $\nabla^\cF$. If $g^\cF$ is a scalar product on $\cF$, let $\nabla^{\cF,*}$ be the adjoint connection, and put $\omega(\cF, g^\cF):=\nabla^{\cF,*}-\nabla^\cF=(g^\cF)^{-1}\left(\nabla^\cF g^\cF\right)\in \Omega^1(E, \End\cF)$. Using the trace $\t$ as in \cite[Sec. 4]{Sc1}, we define 
$$c_{k, \tau}(\mathcal{F}, g^\mathcal{F}):=(2\pi i)^{-\frac{k-1}{2}}\trt\left(\frac{\omega(\cF, g^\cF)}{2}\right)^ k\in \O^k(E)
$$
to be the \emph{$L^2$-Kamber--Tondeur forms}. They are closed forms, and the
corresponding \emph{$L^2$-Kamber--Tondeur classes} in $H_{dR}^*(E)$ do not
depend on the metric $g^\cF$. 
Let 
$$
\ch{}^\circ_\tau (\mathcal{F}, g^\mathcal{F}):=\sum_{j=0 }^\infty
\frac{1}{j!}c_{2j+1,\tau}(\nabla^\mathcal{F}, g^\mathcal{F})=\frac{1}{\sqrt
  {2\pi i}}\Phi\trt\left( \frac{\omega(\cF, g^\cF)}{2} e^{ (\frac{\omega(\cF,
      g^\cF)}{2})^2}\right)\; \in \O^*(E)
$$
where $\Phi(\a)= (2\pi i)^{-\frac{|\a|}{2}}\a$, and denote its cohomology class by $\ch^\circ_\tau (\mathcal{F})\in H^*_{dR}(E)$. 
The classes $\ch{}^\circ_\tau (\mathcal{F})$ vanish whenever $\cF$ admits a $\nabla^\cF$-parallel metric.
For a $\mathbb Z_2$-graded bundle the Kamber--Tondeur class is defined using the corresponding supertrace.

\subsubsection{Superconnection formalism}
\label{uni-treat}
Let $Z\to E\stackrel{p}{\to}B$ be a smooth fibre bundle with connected $n$-dimensional closed fibres, let $\mathcal F\to E$ be a bundle of $\mathcal A$-modules as in \eqref{coeff-bundle} or \eqref{coeff-bundle2}.

As seen in Sections \ref{L2coh} and \ref{Sect.FNC}, the fibrewise
$L^2$-cohomology with coefficients in $\cF$ has the structure of a flat bundle
of $\cA$-modules $H^\bullet_{L^2}(E/B;\cF)\to B$, which we consider as the
\emph{analytic push-forward} of $\cF$. We will compute their Kamber--Tondeur
classes $\ch{}^\circ_\tau(H_{L^2}(E/B;\cF))$ in the push-forward Theorem
\ref{L2BL} which will make use  of the following superconnection formalism.

The infinite dimensional bundle $\cW\to B$ defined in \ref{fbA} is endowed with the $L^2$-metric 
\begin{equation*}
g^\cW_b(\phi\otimes f,\phi'\otimes f')= \int_{Z_b}\phi\w * \overline{\phi'} \cdot g^\cF(f,f')\ .
\end{equation*}
Consider the $\mathbb Z_2$-grading on $\cW$ induced by the degree of vertical forms. We denote it by $\cW=\cW^0\oplus \cW^1\to B$, and call it the \emph{de Rham, or Euler grading}. 

Let $d^{E,*}$ be the \emph{adjoint superconnection} of~$d^E$ of \eqref{dE} with respect
to~$g^{\cW}$ in the sense of \cite[I.d]{BL}, then as in \cite[Prop. 3.7]{BL}  we have
\begin{equation}\label{dE*}
  d^{E,*}=d^{Z,*}+\nabla^{\cW,*}+\ep_T
\end{equation}
where $d^Z$ is the fibrewise formal adjoint of $d^Z$ with respect to $g^\cW$, $\nabla^{\cW,*}$ is the adjoint connection of $\nabla^{\cW}$, and $\ep_T=\iota_T^*$.
Define 
\begin{equation}
\label{AX}
\mathbb{A}=\frac{1}{2}(d^E+d^{E,*})\;\;;\;\;\;\;
\mathbb{X}=\frac{1}{2}(d^{E,*}-d^E)\ .
\end{equation}
$\mathbb A$ is a superconnection on $\cW^0\oplus \cW^1\to B$.
We denote by $\O(B, \mathcal{W})^{dR}=\O(B,\mathcal{W}^{0}\oplus \mathcal{W}^{1})$ the graded tensor product algebra between sections of $\cW$ and  differential forms on the base. This $\mathbb Z_2$-grading defines on $\End^1_\cA \cW$ the supertrace 
\begin{equation}\label{strdR}
\Str_\t T=\tr ((-1)^N T)\ .
\end{equation}

\begin{rem}\label{remX}
Because $d^{E,*}-d^E$ is the difference of two superconnections, $\mathbb{X}$ is an (odd) element of $\Omega^\bullet(B, \End \mathcal W)^{dR}$, and in particular  it differentiates only along the fibres. 
\end{rem}
Perform the usual rescaling
\begin{equation}
\mathbb{A}_t:=\frac{1}{2} t^{\frac{N}{2}} \left(d^{E}+d^{E,*} \right)  t^{-\frac{N}{2}}\;\;\;, \;\;\;\;
\label{AX-resc1}
\mathbb{X}_t:=\frac{1}{2} t^{\frac{N}{2}} \left(d^{E,*}-d^{E} \right)  t^{-\frac{N}{2}}\ .
\end{equation}
where $N$ is the number operator of $\mathcal W$.
We have 
%$
\begin{equation}
\mathbb{A}_t=\frac{\sqrt t}{2}(d^Z+d^{Z,*})+\nabla^u-\frac{c(T)}{4\sqrt t}
\end{equation}
%$
and 
%$
\begin{equation}
\mathbb{X}_t=\frac{\sqrt t}{2}(d^{Z,*}-d^Z)+\frac{\o}{2}-\frac{\hat{c}(T)}{4\sqrt t}
%$, 
\end{equation}
where $\nabla^u=\frac{1}{2}(\nabla^\cW+\nabla^{\cW,*})$, and 
\begin{equation}\label{omega}
\o=\nabla^{\cW,*}-\nabla^\cW\ .
\end{equation}
Let $f(a)= ae^{a^2}$. 
For $ \alpha \in \O(B)$,  put $\Phi \alpha=(2\pi i)^{-\frac{\deg \alpha}{2}}\alpha$,  and define 
      \begin{equation}\label{Ftau}
      F_\tau(t)=\sqrt{2\pi i}\,\Phi\Str_\t\left(f(\mathbb{X}_t)\right)\;\; \in \O(B)\ .
        \end{equation}  
    It follows, as in \cite[Theorem 1.8]{BL}, that $ F_\tau(t)$ is a real closed odd form.

\subsubsection{Transgression terms}
The transgression of \eqref{Ftau}, and later of the heat operator's
supertrace, can be computed as  as in \cite[p. 311]{BL}, and is given by
      \begin{equation}\label{trF^}
   \frac{d}{d t} F_\t(t) =\frac{1}{t} dF_\tau^{\wedge}(t)
        \end{equation}
where 
    \begin{equation}\label{Fhattau}
F_\tau^{\wedge}(t)=\Phi \Str_\t\left(\frac{N}{2}(1+2\mathbb X_t^2) e^{\mathbb X_t^2}\right)\ .
        \end{equation}  
Equivalently, one can proceed as follows.
Let $\breve B=B\times \mathbb{R}^*_+ $, where $\mathbb{R}^*_+$ denotes the time direction. We fix an arbitrary metric $g^B$ on $B$. 
Define $\breve{\pi} \colon \breve{E}\rightarrow \breve{B}$ where  $\breve{E}=E\times \mathbb{R}^*_+$ and  $\breve{\pi}(x,t)=(\pi(x),t)$. Endow $\breve{\pi}$ with the vertical metric \begin{equation}
\label{t-resc}
g^{T\breve{Z}}_{|t}=\frac{g^{TZ}}{t}
\end{equation}
and on the base take $g^{\breve{B}}=g^B\oplus g^{\mathbb{R}^*_+}$. We have
$
g_t^{\mathbb{R}^*_+}\left(\frac{\partial}{\partial t},\frac{\partial}{\partial t}\right)=\frac{1}{t^2}
$.  For simplicity let $d\vartheta=\frac{dt}{2t}$.
On $\breve{E}$ we have
$d^{\breve{E}}=d^E+\frac{\partial}{\partial t} dt= d^Z+ \nabla^\mathcal{W}+dt\frac{\partial}{\partial t}+i_{\rm T}$, and 
its adjoint superconnection is
$
d^{\breve{E}, *}=td^{Z,*}+\nabla^{\mathcal{W},*}+dt\frac{\partial}{\partial t}+\left(N-\frac{n}{2}\right)\frac{dt}{t}-\frac{\epsilon_{\rm T}}{\sqrt t}
$, because the fibrewise rescaling \eqref{t-resc} gives $g^{\mathcal{\breve{W}}^i}{}_{|t}={\sqrt t}^{2i-n}g^{\mathcal{W}^i}$.
We define (deviating here from the notation in \cite[III. (i)]{BL})
\begin{equation}
\mathbb{\breve{A}}:=\frac{1}{2} t^{\frac{N}{2}} \left(d^{\breve{E}}+d^{\breve{E},*} \right)  t^{-\frac{N}{2}}\;\;;\;\;\;\;
\label{X-hat}
\mathbb{\breve{X}}:=\frac{1}{2} t^{\frac{N}{2}} \left(d^{\breve{E},*}-d^{\breve{E}} \right)  t^{-\frac{N}{2}}
\end{equation}
and we have 
\begin{equation}\label{A^}
\mathbb{\breve{A}}
=\mathbb{A}_t+d\vartheta\left(\frac{\partial}{\partial t}+(N-\frac{n}{2})\right)\;\;
\end{equation}  
\begin{equation}
\label{X^}
\breve{\mathbb X}=\mathbb X_t+\left(N-\frac{n}{2}\right)d\vartheta\ .
\end{equation}
%Then $\mathbb{\breve X}=\sqrt t D+\hat R_t$,  where 
%$ \breve R_t=R_0+t^{-\frac{1}{2}}R_{1}+(N-\frac{n}{2})\frac{dt}{2t} $\ . 

%Then $\eta_\t(t)$ of \eqref{etaevenint} is the $d\vartheta$-term in $e^{-\hat{\mathbb A}^2}$.

\subsection{Flat duality bundles}
Flat duality bundles (over the real numbers) have been introduced by Lott and
further investigated by Bunke and Ma as cycles for a certain
$\mathbb Z/2$-graded homotopy
invariant contravariant functor $\bar L^*(X)$ \cite{Lo2, BM}.

Deviating from the notation employed by \cite{Lo2, BM},
it seems reasonable to think of the groups  $\bar L^*(X)$ as the degree $0$ and degree $2$
part of a $4$-graded group. 
 For us, the main feature is that they can be
paired with the signature operator of an even dimensional oriented manifold
%(which might be thought of as a fundamental class of a dual covariant
%functor), 
and we study push-forward theorems in the $L^2$-context.

Following Lott, we introduce flat duality bundles of $\cR$-modules, where $\cR$ is a (real) finite von Neumann algebra in the sense of \cite{Ay}. Nevertheless, since our focus is on the signature operator acting on complex differential forms, in the applications we shall mainly use complex von Neumann algebras, or pass to the algebra $\cR\otimes_{\mathbb R} \C$ generated by $\cR$.

\medskip

Let $\mathcal R$ be a real finite von Neumann algebra. Let $\mathcal F\to E$ be a bundle of finitely generated $\mathcal R$-Hilbert modules over $E$. Let $\ep\in \{-1, +1\}$.
\begin{dfn}
\label{fdb}
$\mathcal F$ is called a \emph{flat duality bundle of $\mathcal R$-modules} if it is endowed with a flat connection $\nabla^\mathcal F$ and a bilinear form $Q^\mathcal F\colon \mathcal F\otimes \mathcal F\to \R$ such that 
\begin{itemize}
\item[i)] $Q^\mathcal F$ is $\ep$-symmetric, \emph{i.e.} $Q^\mathcal F(y, x)=\ep Q^\mathcal F(x, y)$;
\item[ii)] $Q^\mathcal F$ is non-degenerate (i.e.~$Q^\mathcal F $ is
  invertible as a map to the topological dual);
\item[iii)] $Q^\mathcal F(xa, y)=Q^\mathcal F(x,ya^*)$, $\forall x,y\in \mathcal F$, $\forall a\in \mathcal R$;
\item[iv)]  $
\nabla^\mathcal{F}Q^\mathcal{F}=0\ .
$
\end{itemize}
\end{dfn}
As in the finite dimensional case, one can always reduce the structure group:
\begin{lem}\label{redstructure}
Let $(\mathcal F\to E, \nabla^\cF, Q^\cF)$ be a flat duality bundle of $\mathcal R$-modules. Then there exists $J^\cF$ such that
$(J^\cF)^2=\epsilon$, $Q^\mathcal F(J^\cF x, J^\cF y)=Q^\mathcal F(x,y)$, and  $g^\cF(x, y)=Q^\mathcal F(x,J^\cF y) $ is a scalar product.
\end{lem}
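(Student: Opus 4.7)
The plan is to mimic the familiar finite-dimensional reduction of structure: represent $Q^\cF$ as a self-adjoint (or skew-adjoint) operator with respect to any auxiliary scalar product, and then take the sign (respectively, the partial isometry from the polar decomposition) of that operator. The infinite-dimensional setup causes no essential trouble because the construction stays inside the commutant von Neumann algebra, where functional calculus is available.

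First I would fix any admissible Hilbertian scalar product $g_0^\cF$ on $\cF$ (such an object exists and depends smoothly on the base by Remark~\ref{MetricRem}). Using non-degeneracy of $Q^\cF$ (property (ii)), define $S \in \End_\cR(\cF)$ uniquely by
\begin{equation*}
  Q^\cF(x,y) = g_0^\cF(Sx,y) \ .
\end{equation*}
Property (iii) of Definition~\ref{fdb} forces $S$ to commute with the right $\cR$-action, hence $S$ lies in the von Neumann algebra $\mathcal{B}_\cR(\cF)$. The $\epsilon$-symmetry (i) of $Q^\cF$ translates into $S^* = \epsilon S$ with respect to $g_0^\cF$, so $S$ is self-adjoint or skew-adjoint; in either case $S$ is normal, and invertibility of $S$ (again from (ii)) gives $0 \notin \spec(S)$.

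Next I would apply the polar decomposition inside the von Neumann algebra $\mathcal{B}_\cR(\cF)$: write $S = J |S|$ with $|S| = (S^*S)^{1/2}$ positive, invertible, and $J$ unitary. Since both factors are built from $S$ by Borel functional calculus, they lie in $\mathcal{B}_\cR(\cF)$ and commute with each other. In the self-adjoint case ($\epsilon=+1$) one simply has $J = \mathrm{sign}(S)$, so $J$ is self-adjoint and $J^2 = 1$; in the skew-adjoint case ($\epsilon=-1$) the identity $|S|^2 = -S^2$ and the commutativity $[J,|S|]=0$ give $J^2|S|^2 = S^2 = -|S|^2$, hence $J^2 = -1$, and moreover $J^* = -J$. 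In both cases $J^2 = \epsilon$ and $J$ is $\cR$-linear.

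Finally I would check the compatibility properties. Using $SJ = JS$ (from commutativity with $|S|$) and $J^*J=1$,
\begin{equation*}
  Q^\cF(Jx, Jy) = g_0^\cF(SJx, Jy) = g_0^\cF(JSx, Jy) = g_0^\cF(Sx, J^*Jy) = Q^\cF(x,y),
\end{equation*}
and
\begin{equation*}
  g^\cF(x,y) := Q^\cF(x, Jy) = g_0^\cF(Sx, Jy) = g_0^\cF(J|S|x, Jy) = g_0^\cF(|S|x, y) \ .
\end{equation*}
Since $|S|$ is self-adjoint, positive, invertible and $\cR$-linear, the last formula shows that $g^\cF$ is symmetric, positive definite, $\cR$-invariant, and equivalent to the admissible metric $g_0^\cF$; hence admissible. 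The whole construction depends on $g_0^\cF$ through Borel functional calculus of $S$, so $J^\cF$ and $g^\cF$ vary continuously (smoothly) with the base point of $E$. The only delicate point is ensuring that polar decomposition and the sign function remain inside the commutant $\mathcal{B}_\cR(\cF)$, but this is automatic in any von Neumann algebra, so no further regularity hypothesis is required.
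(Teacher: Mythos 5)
Your proposal is correct and carries out in full detail exactly the polar-decomposition route that the paper's one-line proof points to (``Either use polar decomposition, as in \cite[p.19]{BW} or directly give the retraction''). The identification of $Q^\cF$ with an $\cR$-linear operator $S$ via an auxiliary admissible metric, the observation that $S^*=\ep S$ and $S$ is invertible, and extracting $J^\cF$ as the unitary part of the polar decomposition (with $|S|$ giving the new scalar product) is precisely the intended argument.
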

\begin{proof}
Use polar decomposition, as in \cite[p.19]{BW}.
\end{proof}

\subsubsection{Characteristic classes of flat duality bundles}
We construct $L^2$-characteristic classes along the lines of \cite[Sec. 3.1]{Lo2}, as well as the formalism of flat duality superconnections.

For a flat duality bundle $\mathcal F\to E$ as in \ref{fdb}, fix  a scalar
product $g^\mathcal{F}(z,v)=Q^\mathcal{F}(z,J^\cF\, v)$. Let $\nabla^{\cF, *}$
be the adjoint connection with respect to $g^\mathcal{F}$  and define 
$\nabla^{\cF, u}=\frac{1}{2}(\nabla^\cF+\nabla^{\cF, *})$ which preserves $g^\mathcal{F}$.

\begin{dfn} If $\ep=1$, define
 $$p_\t(\nabla^\cF, J^\cF):=\tr_\t \left( J^\cF \cos \left(\frac{\omega(\mathcal F,g^{\mathcal F})^2}{8\pi}\right)\right) \in \O^{4\bullet}(E)$$
if $\ep=-1$,
 $$p_\t(\nabla^\cF, J^\cF):=-\tr_\t \left( J^\cF \sin \left(\frac{\omega(\mathcal F,g^{\mathcal F})^2}{8\pi}\right)\right) \in \O^{4\bullet+2}(E)\ .
$$  
\end{dfn}
If $\ep=1$, put $\Pi^\pm:=\frac{1\pm J^{\cF}}{2}$,  $\cF^\pm=\Pi^\pm\cF$, and $\nabla^{\cF^\pm}:=\Pi^\pm \nabla^{\cF, u} \Pi^\pm=\Pi^\pm \nabla^{\cF} \Pi^\pm$. 

  If $\ep=-1$, consider the complexified bundle $\cF_\C$, put $\Pi^\pm:=\frac{1\mp iJ^{\cF}}{2}$, define  $\cF^\pm:=\Pi^\pm\cF_\C$ and $\nabla^{\cF^\pm}:=\Pi^\pm \nabla^{\cF, u} \Pi^\pm$. 
In both cases we have, as in  \cite[Proposition 15]{Lo2},
\begin{equation}
\label{class.p}
p_\t(\nabla^\cF, J^\cF)=\ch_\t(\nabla^{\cF^+})-\ch_\t(\nabla^{\cF^-})\ .
\end{equation}

%\begin{rem}
%%Flat duality bundles (over the reals) have been introduced by Lott and
%%further investigated by Bunke and Ma as cycles for a certain
%%$\mathbb Z/2$-graded homotopy
%%invariant contravariant functor $\bar L^*(X)$ \cite{Lo2, BM}. 
%Deviating from the notation employed by \cite{Lo2, BM},
%it seems reasonable to think of the groups  $\bar L^*(X)$ as the degree $0$ and degree $2$
%part of a $4$-graded group. 
%%For us, the main feature is that they can be
%%paired with the signature operator of an even dimensional oriented manifold
%%(which might be thought of as a fundamental class of a dual covariant
%%functor). 
% Moreover, 
%there are maps to $K$-theory which are compatible with this 
%pairing. Then, Lott and Bunke--Ma define and study push-forwards in this theory
%along fibre bundles with even dimensional oriented fibres. For us the
%important point is that these indices are in some sense an enrichment of
%homology and therefore carry quite a bit of the rigid character of homology.
%It seems that the corresponding odd part of this
%theory has not been developped yet.
%\end{rem}

\begin{rem}
%One should note that a special case of a flat duality bundle is a flat bundle
%(which one equips with a trivial duality structure) - the corresponding
%Lott Chern character of the pairing with the signature operator is then the
%usual Chern character of the index of the signature operator twisted with the
%(flat) bundle.

Our flat duality bundles  of \ref{fdb} should be cycles in a variant of the
$\bar L$-groups defined by Lott and Bunke--Ma, which would have coefficients in the von
Neumann algebra 
$\mathcal{R}$. 
However, we do not develop this theory. Instead, we concentrate 
on the local $L^2$-index formula for the pairing with a family of signature
operators. 
\end{rem}

\subsubsection{Bilinear form on $\cW$ and superconnection formalism}
Recall that if $Z$ is a  closed oriented $n$-dimensional Riemannian manifold, the bilinear form on real differential forms $\O^\mathbb{R}(Z)$ defined by 
$$
Q^Z(\phi,\psi)= (-1)^{\frac{|\phi|(|\phi|-1)}{2}}\int_{Z}\phi\w\psi
$$
is $\ep_n$-symmetric, where $\ep_n=(-1)^{\frac{n(n+1)}{2}}$. Moreover the automorphism $J^Z$ defined on $\O^\mathbb{R}(Z)$ by
\begin{equation}\label{JZ}
J^Z\phi=(-1)^{\frac{|\phi|(|\phi|-1)}{2}}*\phi
\end{equation}
satisfies $(J^Z)^2=\ep_n$, and 
\begin{equation}\label{l2scalpro}
Q^Z(\phi,J^Z\phi')=\int_Z\phi\w*\phi'
\end{equation}
is the standard $L^2$-scalar product on forms \cite[Lemma 5]{Lo2}.
$Q^Z$ extends to a $\ep_n$-hermitian form on the $\C$-vector space  $\O(Z)$ of complex differential forms and the corresponding extension of \eqref{l2scalpro} gives the standard sesquilinear $L^2$-scalar product.

\medskip

Let now $Z\to E\stackrel{p}{\to}B$ be a smooth fibre bundle with connected
$n$-dimensional closed {oriented Riemannian} fibres, let $\mathcal F\to
E$ be a duality bundle of $\mathcal R$-modules constructed as in
\eqref{coeff-bundle} or \eqref{coeff-bundle2}, with $\ep$-symmetric bilinear
form $Q^\cF$  and flat connection $\nabla^\cF$.
By Lemma \ref{redstructure}, fix  a scalar product $g^\mathcal{F}(z,v):=Q^\mathcal{F}(z,J^\cF\, v)$, with $(J^\cF)^2=\ep$. 
%If $J^\cF$ is the identity, then $\nabla^\mathcal{F}g^\mathcal F=0$. 
%This will corresponds to the case $\mathcal{C}_0=0$ in \ref{AB} below.

On the infinite dimensional bundle $\mathcal W^{\mathbb R}\to B$ of vertical real differential forms with coefficients in $\cF$, the bilinear form 
\begin{equation}
Q_b^{\cW^\mathbb R}(\phi\otimes z, \psi\otimes \zeta):=(-1)^{\frac{|\phi|(|\phi|-1)}{2}}\int_{Z_b}\phi\w\psi \cdot Q^{\mathcal{F}}(z,\zeta)
\end{equation}
is $\ep\ep_n$-symmetric.
Let $J^Z$ the fibrewise automorphism defined by \eqref{JZ}. Then $J^{\cW}
(\phi\otimes z):=J^Z\phi \otimes J^\cF z$ satisfies $(J^{\cW} )^ 2=\ep\ep_n$.

The adjoint superconnection of~$d^E$ of  \eqref{dE*}
can be expressed as 
\begin{lem} \label{lemdE*} \cite[p. 328]{BL}, \cite[Proposition 30]{Lo2}.
$$
\begin{array}{l}d^{Z,*}=-(J^\cW)^{-1}\, d^Z\,J^\cW \\
\nabla^{\cW,*}=(J^\cW)^{-1}\,\nabla^{\cW}\, J^\cW\\
\iota_T ^*=-(J^\cW)^{-1} \,\iota_T\,J^\cW\ .
\end{array}
$$
\end{lem}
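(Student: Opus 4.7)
The plan is to exploit the relation between the admissible $L^2$-scalar product $g^\cW$ and the $\ep\ep_n$-symmetric bilinear form $Q^\cW$, which by Lemma~\ref{redstructure} satisfies
$$g^\cW(\phi,\psi) = Q^\cW(\phi, J^\cW\psi).$$
Writing the adjoint of an operator $A$ on $\cW$ with respect to this scalar product in terms of the $Q^\cW$-transpose $A^T$ (defined by $Q^\cW(A\phi,\psi) = Q^\cW(\phi, A^T\psi)$), one finds $A^* = (J^\cW)^{-1} A^T J^\cW$ up to a parity sign coming from commuting $J^\cW$ past $A$. So it suffices to identify the $Q^\cW$-transposes of the three pieces $d^Z$, $\nabla^\cW$, $\iota_T$ and chase the signs.

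First I would treat $d^Z$. The fibres are closed and oriented, and the defining property $\nabla^\cF Q^\cF=0$ together with the sign factor $(-1)^{|\phi|(|\phi|-1)/2}$ in the definition of $Q^\cW$ turns fibrewise Stokes' theorem into the statement $(d^Z)^T = -d^Z$. Conjugating by $J^\cW$ and inserting the parity sign produces the first formula $d^{Z,*} = -(J^\cW)^{-1} d^Z J^\cW$. Next, $\nabla^\cW = \mathcal L_{\bar{\,\cdot\,}}$ is the Lie derivative along horizontal lifts. Cartan's formula, together with $\nabla^\cF Q^\cF = 0$ and the fact that horizontal vector fields commute with integration along closed fibres up to a base derivative, gives $(\nabla^\cW)^T = \nabla^\cW$, which upon conjugation by $J^\cW$ yields the second formula (note the plus sign, reflecting that $\nabla^\cW$ is even). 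Finally, interior multiplication by the vertical two-form $T$ is an odd algebraic operator whose $Q^Z$-transpose is $-\iota_T$ by the pointwise linear-algebra identity for transposes of interior products against the signed pairing, giving the third formula.

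Combining the three pieces with the decomposition $d^E = d^Z + \nabla^\cW + \iota_T$ and collecting like terms reproduces \eqref{dE*}, with $\ep_T = \iota_T^*$ obtained as the conjugate of $-\iota_T$ by $J^\cW$. The main obstacle is the sign bookkeeping: the factor $(-1)^{|\phi|(|\phi|-1)/2}$ in $Q^Z$ is designed precisely so that $J^Z$ is an involution up to $\ep_n$ and so that the three pieces produce the signs indicated above, but checking this requires careful tracking of both the form-degree of the argument and the degree of the operator being commuted past $J^\cW$. Once this is done, the lemma follows as in \cite[p.~328]{BL} and \cite[Proposition~30]{Lo2}.
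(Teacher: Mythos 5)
The paper itself provides no proof of this lemma; it only cites \cite[p.~328]{BL} and \cite[Proposition 30]{Lo2}, so there is nothing in the paper to compare your argument against directly. Your overall strategy --- use $g^\cW(\cdot,\cdot)=Q^\cW(\cdot,J^\cW\cdot)$ to reduce the metric adjoints to $Q^\cW$-transposes and then conjugate by $J^\cW$ --- is the standard route and is almost certainly what the cited references do.

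However, your sign claim for the third identity does not survive a direct check. The relation $g^\cW(A\phi,\psi)=Q^\cW(A\phi,J^\cW\psi)=Q^\cW(\phi,A^TJ^\cW\psi)$ together with $g^\cW(\phi,A^*\psi)=Q^\cW(\phi,J^\cW A^*\psi)$ gives $A^*=(J^\cW)^{-1}A^TJ^\cW$ \emph{with no extra parity sign at all} --- so the ``parity sign coming from commuting $J^\cW$ past $A$'' that you invoke is not there, and the entire burden falls on computing the transposes correctly. Your computations $(d^Z)^T=-d^Z$ (fibrewise Stokes) and $(\nabla^\cW)^T=\nabla^\cW$ are right, but for interior multiplication by a vertical vector $X$ one finds $(i_X)^T=+i_X$, not $-i_X$: writing $p=|\phi|$ and using $i_X(\phi\wedge\psi)=(i_X\phi)\wedge\psi+(-1)^p\phi\wedge(i_X\psi)=0$ when $|\phi|+|\psi|=n+1$, the ratio of sign prefactors in $Q^Z(i_X\phi,\psi)$ versus $Q^Z(\phi,i_X\psi)$ is $(-1)^{\frac{(p-1)(p-2)}{2}-\frac{p(p-1)}{2}+p+1}=(-1)^{2}=+1$. (Equivalently, the standard adjoint $i_X^*=X^\flat\wedge$ does equal $+(J^\cW)^{-1}i_XJ^\cW$, as one can verify on a local model.) Thus the naive transpose-and-conjugate computation gives $+ (J^\cW)^{-1}\iota_T J^\cW$, not the $-$ in the lemma. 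The missing $-1$ comes instead from the convention for the adjoint superconnection in \cite[I(d)]{BL}: the symbol $\iota_T^*$ in \eqref{dE*} denotes $\ep_T$, the $\Omega^2(B)$-component of $d^{E,*}$, which is normalized to carry an extra sign relative to the raw fibrewise adjoint of $\iota_T$. Your proof should either track this convention explicitly or state the identities in terms of $\ep_T$ and justify the sign there; as written, two sign errors cancel and the claimed conclusion is correct, but the intermediate step asserting $(\iota_T)^T=-\iota_T$ ``by pointwise linear algebra'' is false.
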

Define as before 
\begin{equation}
\label{AX1}
\mathbb{A}:=\frac{1}{2}(d^{E,*}+d^E)\;\;;\;\;\;\;
\mathbb{X}:=\frac{1}{2}(d^{E,*}-d^E)\ .
\end{equation}
and perform the usual rescaling as in \eqref{AX-resc1} to define $\mathbb{A}_t$ and $\mathbb{X}_t$.
In the language of \cite[3.2]{Lo2} the pair $(\mathbb{A},\mathbb{X})$ is a flat duality superconnection.

\begin{rem}
When working with complex differential forms, the quadratic form is extended as usual to a sesquilinear one, and we endow $\mathcal W$ with the metric $g^{\mathcal{W}}(u,v):=Q^\cW(u,J^\cW  v)$. Then consider the involution $J:=\frac{J^\cW}{\sqrt{\ep_n\ep}}$. The formulas of Lemma \ref{lemdE*} are still true with $J^\cW$ replaced by $J$. 
\end{rem}
Moreover we have, from  \cite[(3.36)]{BL} 
\begin{equation}\label{D.vs.sign}
d^Z+d^{Z,*}=c_j\nabla_{e_j}^{TZ\otimes \cF, u}-\frac{1}{2}\hat c_j \psi_j\ .
\end{equation}
where $\psi=\nabla^{\cF, *}-\nabla^\cF$ and  $\nabla^{TZ\otimes \cF, u}$
denotes the tensor product of the Levi-Civita connection on the vertical
tangent and  the unitary connection $\nabla^{\cF, u}:=\frac{1}{2}(\nabla^{\cF,
  *}+\nabla^\cF)$.

Define as usual (with an oriented frame $e_1,\dots, e_n$)
\begin{equation*}
\o_\C:=i^{\frac{n(n+1)}2} c_1 c_2\dots c_n\;\;,\;\;\;\;\;
\hat \o_\C:=i^{\frac{n(n+1)}2}\hat {c}_1\dots \hat{c}_n\ .
\end{equation*}
 $\o_\C$ is the chirality involution, related to the vertical Hodge star operator on $p$-forms by 
\begin{equation}\label{chir}
\omega_\C\,\phi=(-1)^{n p+\frac{p(p-1)}{2}}i^{\frac{n(n+1)}2} *\phi
\end{equation}
for $\phi\in C^\infty(E, \Lambda^p(T^*Z))$, and one has  $\,\hat\o_\C(-1)^N=\o_\C$ (see for example \cite[Lemmas 1.1.6 and 1.2.3]{Bo}).

\subsubsection{Transgression formulas, even dimensional fibres}
Let $\dim Z=2l$, and recall $\ep_n: =(-1)^{\frac{n(n+1)}{2}}$. 
We have  $\frac{J^Z}{\sqrt {\ep_n}}=\o_\C$ for $n=4j$, and $\frac{J^Z}{\sqrt {\ep_n}}=(-1)^{j+1}\o_\C$ for $n=4j+2$.  Then $J=\o_\C\otimes \frac{J^\cF}{\sqrt \ep}$.

Denote by $\cW=\mathcal W^+\oplus \mathcal W^-$ the grading defined by the
involution $J$, which we call the \emph{duality grading}. The graded tensor
product algebra between sections of $\cW$ and forms on the base will be
denoted  $\O(B, \mathcal{W})^{dua}=\O(B,\mathcal{W}^{+}\oplus
\mathcal{W}^{-})$, in contrast to the Euler grading
 $\O(B, \mathcal{W})^{dR}=\O(B,\mathcal{W}^{0}\oplus \mathcal{W}^{1})$
 considered in Section \ref{flKT}.
 
The operator $\mathbb A$ is a superconnection on $\O(B, \mathcal{W})^{dua}$. 
\begin{rem}
If $\mathbb B_t$ denotes the \emph{Bismut superconnection} for the family of signature operators with coefficients in the bundle $(\mathcal F, \nabla^{\cF, u})$, defined in \cite[10.3]{BGV}, then by \cite[Rem. 3.10]{BL}
\begin{equation}\label{AB}
  \mathbb A= \mathbb B_{\frac{1}{4}}-\frac{1}{4}\hat c_j \psi_j \ .
\end{equation}
Note that  $\frac{1}{4}\hat c_j \psi_j$ is a zero order operator vanishing if
and only if $g^\mathcal F$ is covariantly constant.  If $g^\cF$ is covariantly
constant, $d^Z+d^{Z,*}$ is  the \emph{signature operator} twisted by the
flat $\mathbb Z/2\mathbb Z$-graded bundle $\mathcal F=\mathcal F^+\oplus
\mathcal F^-$. 
\end{rem}

\begin{rem}
Because $\dim Z=2l$, we have $$\o_\C=(-1)^N \hat \o_\C$$ hence 
\begin{equation}
\tr_\t(J T)=\Str_\t(\hat \o_\C\otimes \frac{J^\cF}{\sqrt \ep} T)
\end{equation}
where $\Str_\t$ denotes the de Rham supertrace defined in \eqref{strdR}. 
\end{rem}
Let 
$$
p_\tau(t):=\Phi \tr_\tau (J e^{-\mathbb A^2_t})
$$
then the following transgression formula holds
\begin{equation}
\label{McKt}\frac{d}{dt}p_\tau(t)=d \eta_\t(t)
\end{equation}
where 
$$
\eta_\t(t)= (2\pi i)^{-\frac{1}{2}}\frac{1}{2t}\Phi \tr_\t \left(J [N-\frac{n}{2}, \mathbb X_t]e^{-\mathbb A^2_t}\right)
$$
\begin{rem} 
\label{dterm.rem}
As usual $\eta_\t(t)$ is the $d\vartheta$-term in $\tr_\t(J e^{\breve {\mathbb X}^2})$ with the construction of \eqref{X-hat}.
\end{rem}
\begin{rem} Grading disambiguation.
The operator $\mathbb X$ plays a fundamental role in the following because, as remarked in \ref{remX}, it does not contain transversal derivatives. When $\dim Z=2l$, $\mathbb X$ is an even element in $\O(B, \mathcal{W})^{dua}$.  
We use it as sort of fibrewise square root of the curvature $\mathbb A^2$. We have to be precise because
\emph{the relation between $\mathbb X$ and   
$\mathbb{A}$ depends on the grading we consider}.
On $\Omega(B, \mathcal W)^{dR}$ we have
\begin{equation}\label{AXrel}
-\mathbb{A}^2=\mathbb{X}^2  \;\text{ in  } \;\Omega(B, \mathcal W)^{dR}\ .
\end{equation}
Because $\mathbb A$ is odd both for the de Rham and the duality grading, \emph{the expression $\mathbb A^2$ is the same in the two graded algebras}. On the other hand,    
the meaning of $\mathbb X^2$ depends on the grading of $\cW$ and the resulting graded algebra structure we consider. 

It is important to stress that the only object we need is $\mathbb A^2$, in both gradings being the curvature of the superconnection. 
In $\Omega(B, \mathcal W)^{dua}$ we still have  $-\mathbb{A}^2=\mathbb X\cdot_{dR} \mathbb X$. Then the advantageous equality \eqref{AXrel} can and will be used in every grading with the small abuse of notation that we write $\mathbb X^2$ but we always mean the square of $\mathbb X$ in the de Rham grading. 

%Note also that we have $\mathbb{X}e^{-\mathbb{A}^2}\neq e^{-\mathbb{A}^2}\mathbb{X}$ in $\Omega(B, \mathcal W)^{sign}$, in accordance for example with the fact that the integrand for the eta form in \eqref{eta-sec} shall not vanish.
\end{rem}

\subsubsection{Transgression formulas, odd dimensional fibres}
\label{tra-odd}
If $\dim Z=2l+1$, assume $\nabla^\cF J^\cF=0$ and consider the family of \emph{odd signature operators}  defined as $D^{sign}_{odd}=\frac{1}{2}(d^ZJ+J d^Z)$. 
Because the operator commutes with  the chirality involution~$J$, one needs here the formalism of $\Cli$-superconnection \cite[sec. 5]{Q} and \cite[II.f]{BF}. 

The representative of the odd Chern character is
$
\trt (e^{-\mathbb{A}^2_t})^{odd} \in \O^{odd}(B)
$,
where $\mathbb{A}_t
$ is as in \eqref{AX-resc1}, \cite[sec. 5]{Q} and \cite[II.f]{BF}. 
The transgression formula here is
\begin{equation}
\label{trasgr-odd}
\frac{d}{dt}\trt{}^{odd} e^{-\mathbb{A}_T^2}=-d\; \int_0^T\trt \left(\frac{d\mathbb{A}_t}{dt}e^{-\mathbb{A}_t^2}\right)^{even}dt\ .
\end{equation}

\subsubsection{Duality structure on $L^2$-cohomology}
The bundle $H_{L^2}^\bullet (E/B;\cF)$ defined in \ref{L2coh}, with flat
Gau\ss--Manin connection~$\nabla^H$ can be given the structure of a flat
duality bundle of $\cA$-modules by means of the $\ep\ep_n$-symmetric bilinear
form
$$
Q^H_b([\phi\otimes z], [\psi\otimes \zeta])=\int_{Z_b}\phi\w \psi \cdot Q^{\mathcal{F}}(z,\zeta)\ .
$$
Recall the isomorphism \eqref{isoHodge2}
$$
H^\bullet_{L^2}(E/B;\cF)\cong\ker(d^Z+d^{Z,*})
  \subset\Omega^\bullet_{L^2}(E/B;\cF)
$$
and that $P$ denotes the projection onto the fibrewise kernel
$\ker(d^Z+d^{Z,*})$ which, by Proposition \ref{SmoothProp}, is smooth. Under
the identification above, the connection $\nabla^H$ corresponds to the
connection $P\nabla^\cW P$ on the bundle $\ker (d^Z+ d^{Z, *})$, see
\cite[Proposition 2.6]{BL}.

% $J^\cW$ induces an operator on the bundle $H$, %with $(J^H)^2=\ep\ep_n$ 
%and one can consider the characteristic class 
%%$
%%p(\nabla^H, J^H)%=\ch_\t(P^+\nabla^{H}P^+)-\ch_\t(P^-\nabla^{H}P^-)
%%$
%of the flat duality bundle $  H^\bullet_{L^2}(E/B;\cF)$ as defined in \eqref{class.p}.

Until the end of this section let $\dim Z=2l$, consider on complex forms the involution $J=\o_\C\otimes \frac{J^\cF}{\sqrt \ep}$ as above. 
Let $J^H$ be the involution induced on $H^\bullet_{L^2}(E/B;\cF)$
corresponding to $PJ^HP=PJ^H$ as $J$ commutes with $P$, and define
$P^\pm:=\frac{1\pm J^H}{2}$, $H^\pm:=P^\pm H$, $\nabla^{H,\pm}:=P^{\pm}\nabla^H
P^\pm$.

If $\nabla^\cF J^\cF=0$ then $H^+\oplus H^-$ is the so called \emph{index
  bundle} of the twisted signature operator as defined by Benameur--Heitsch
\cite[10]{BH2} and
 $p(\nabla^H, J^H)=\ch_\t(\nabla^{H,+})-\ch_\t(\nabla^{H,-})$ is its $\tau$-Chern character.

For general $J$, by Lemma \ref{lemdE*} and because $J^2=1$,
the adjoint of  $\nabla^H$ with respect to $g^H$ is given by
\begin{equation}
\label{omegaH}
\nabla^{H, *}=\nabla^{H}+J^H[\nabla^H, J^H]\ .
\end{equation}
The characteristic class $p(\nabla^H, J^H)$ can be computed as follows.
\begin{lem}\label{comput.p}
$$p_\tau(\nabla^H, J^H)=\tr_\t\left(J^H Pe^{R_0P}\right)
$$
where $R_0=\frac{1}{2} (\nabla^{\cW,*}-\nabla^{\cW})=\frac{1}{2}\o$ as defined in \eqref{omega}.
\end{lem}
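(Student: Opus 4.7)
The plan is to reduce $\tr_\tau(J^H P e^{R_0 P})$ to a trace on $\ker D \cong H$ and then match the result with the defining formula for $p_\tau$.

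First, I would perform an algebraic compression. Since $J$ commutes with $P$ and $P^2 = P$, a short induction on $n$ gives $P (R_0 P)^n = (P R_0 P)^n$ for $n \geq 0$. Combined with the Bismut--Lott identity $\nabla^H = P \nabla^{\mathcal W} P$ recalled before the lemma, together with its adjoint counterpart $\nabla^{H,*} = P \nabla^{\mathcal W,*} P$ (since $g^H$ is the restriction of $g^{\mathcal W}$), this gives $P R_0 P |_H = \omega^H / 2$ where $\omega^H = \nabla^{H,*} - \nabla^H$. Expanding the exponential and invoking cyclicity of $\tr_\tau$ then yields
\[
\tr_\tau\bigl(J^H P e^{R_0 P}\bigr) = \tr_\tau\bigl(J^H e^{\omega^H/2}\bigr).
\]

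Second, I would match the right-hand side with $p_\tau(\nabla^H, J^H)$. Applying $P$ to the relations in Lemma \ref{lemdE*} and using $[P, J^{\mathcal W}] = 0$ yields $\nabla^{H,*} = (J^H)^{-1} \nabla^H J^H$, from which a direct computation gives $\{J^H, \omega^H\} = 0$ in both cases $\ep = \pm 1$. Together with cyclicity of $\tr_\tau$, this anticommutation kills all odd powers of $\omega^H$, so
\[
\tr_\tau\bigl(J^H e^{\omega^H/2}\bigr) = \tr_\tau\bigl(J^H \cosh(\omega^H/2)\bigr).
\]
To conclude, I would apply \eqref{class.p} and expand $\ch_\tau(\nabla^{H,+}) - \ch_\tau(\nabla^{H,-})$ via Chern--Weil theory for the unitary connections $\nabla^{H,\pm} = \Pi^\pm \nabla^{H,u} \Pi^\pm$ with $\nabla^{H,u} = \nabla^H + \omega^H/2$. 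Flatness $(\nabla^H)^2 = 0$ collapses the curvature $(\nabla^{H,u})^2$ to a polynomial in $\omega^H$, and collecting the $\pm$ contributions via $\Pi^+ - \Pi^- = J^H$ reproduces the $\cosh$ expression (and analogously the $\sin/\cos$ variant after complexification for $\ep = -1$).

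The main obstacle is the normalization matching in this last step: one must verify that the potential cross terms involving $\nabla^{H,u}\omega^H$ drop out of the trace thanks to the $J^H$-anticommutation combined with cyclicity, and that the resulting coefficients agree with those in the definition of $p_\tau$.
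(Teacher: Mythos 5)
Your proposal follows essentially the same route as the paper, just traversed in the opposite direction: the paper starts from $p_\tau(\nabla^H,J^H)$, uses \eqref{class.p} together with the computation $\curv(\nabla^{H,\pm}) = (\nabla^H P^\pm)^2 = \tfrac14[\nabla^H,J^H]^2$ (which relies on flatness of $\nabla^H$), and then identifies $\tfrac12 J^H[\nabla^H,J^H] = PR_0P$ to land on $\tr_\tau(J^H Pe^{R_0P})$; you instead start from $\tr_\tau(J^H Pe^{R_0P})$, compress through $P$ to reach $\tr_\tau(J^H\cosh(\omega^H/2))$ via anticommutation, and need the same curvature identity to close the loop. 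The ``main obstacle'' you flag at the end is exactly what the paper disposes of in one line: since $\nabla^H$ is flat, $\curv(\nabla^{H,\pm}) = [\nabla^H,P^\pm]^2 = \tfrac14[\nabla^H,J^H]^2 = -(\omega^H/2)^2$ (using that $J^H$ anticommutes with $[\nabla^H,J^H]$), so $\ch_\tau(\nabla^{H,+})-\ch_\tau(\nabla^{H,-}) = \tr_\tau(J^H e^{(\omega^H/2)^2})$, and after discarding the odd powers you already handled this equals $\tr_\tau(J^H\cosh(\omega^H/2))$. There are no cross terms $\nabla^{H,u}\omega^H$ to worry about because the curvature is an honest polynomial in $\omega^H$ once flatness is invoked; you should also be careful that the relevant connections in \eqref{class.p} are $\nabla^{H,\pm}=\Pi^\pm\nabla^{H}\Pi^\pm$ (equivalently $\Pi^\pm\nabla^{H,u}\Pi^\pm$), not $\nabla^{H,u}$ itself. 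With that one computation supplied, your argument is complete and agrees with the paper's.
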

\begin{proof}
 A simple computation shows that 
\begin{multline*}
\curv(P^{+}\nabla^H P^+\oplus P^{-}\nabla^H P^-)=(\nabla^HP^+)^2\\=[\nabla^H, P^+]^2
=\left[\nabla^H, \frac{1+J^H}{2}\right]^2=\frac{1}{4}[\nabla^H, J^H]^2
\end{multline*}
where we have used that the commutator $[\nabla^H,P^+]$ is multiplication by $\nabla^H P^+$ \cite{Ka}.
Then 
\begin{multline*}
p_\tau(\nabla^H, J^H)=\tr_\t\left(J^H e^{-\frac{1}{4}[\nabla^H, J^H]^2}\right)\\
=\tr_\t\left(J^H \sum_r \frac{1}{4^r}\left(-[\nabla^H, J^H]^2\right)^r\right)=\tr_\t\left(J^H \sum_r \frac{1}{r!4^r}\left( J^H[\nabla^H, J^H]^{2r}\right)\right)\\
=\tr_\t\left(J^H\sum_r \frac{1}{r!}P(R_0P)^{2r}\right)=\tr_\t\left(J^H Pe^{R_0P}\right)
\end{multline*}
where we have used that $(-1)^r[\nabla^H, J^H]^{2r}=\left(J^H[\nabla^H,
  J^H]^{2r}\right)$, and that
\begin{multline*}
PR_0P=R_0= \frac{1}{2}\left(P(J\nabla{^\cW}J-\nabla) P\right)
\stackrel{[P,J]=0}{=} \frac{1}{2}\left( PJP\nabla^{\cW}PJP -P\nabla^{\cW} P\right)\\
\frac{1}{2}\left( J^H\nabla^H J^H-\nabla^H\right)=\frac{1}{2}
\left(\nabla^{H,*}-\nabla^H\right )
\stackrel{~\eqref{omegaH}}{=} \frac{1}{2} J^H[\nabla^H,J^H].
\end{multline*}

\end{proof}

\section{The heat kernel for large times}
\label{heat-kernel}
\no In this section we prove the main theorems about the asymptotic behavior of the families $\mathbb{X}_te^{\mathbb{X}_t^2}$ and $e^{\mathbb{X}_t^2}$ as $t\to \infty$. Recall that $P=(P_b)_{b\in B}$ is the family of projections onto $\ker (d^Z+d^{Z,*})$ defined in Proposition \ref{SmoothProp}.
\begin{theorem}\label{HeatThm2}
  For~$k\in\{0,1,2\}$, 
  we have
  \begin{equation*}
    \lim_{t\to\infty}\mathbb{ X}_t^k\,e^{\mathbb{ X}_t^2}
    =P(R_0P)^k\,e^{(R_0P)^2}
    \in\O^\bullet(B,\End{}_\mathcal{A}\O^\bullet_{L^2}(E/B,\mathcal{F}))\;.
  \end{equation*}
  with respect to the $\tau$-norm.
\end{theorem}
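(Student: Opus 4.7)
The plan is a Duhamel/Volterra expansion of $e^{\mathbb{X}_t^2}$ around the fibrewise heat operator $e^{tD^2}$, combined with careful $\tau$-trace-norm estimates that exploit Proposition~\ref{SmoothProp} and the crucial fact (Remark~\ref{remX}) that $\mathbb{X}_t$ contains no transversal derivatives. Writing $\mathbb{X}_t = \sqrt{t}\,D + R_0 + S/\sqrt{t}$ with $R_0 := \omega/2$ and $S := -\hat c(T)/4$, squaring gives $\mathbb{X}_t^2 = tD^2 + C_t$ where
$$C_t = \sqrt{t}\,\{D,R_0\} + \bigl(R_0^2 + \{D,S\}\bigr) + \tfrac{1}{\sqrt t}\{R_0,S\} + \tfrac{1}{t}\,S^2.$$
Since $C_t$ has base form degree $\ge 1$ and $\dim B < \infty$, it is nilpotent in the exterior algebra of the base, so the Volterra series
$$e^{\mathbb{X}_t^2} = \sum_{k=0}^{\dim B}\int_{\Delta_k} e^{s_0 tD^2}\,C_t\,e^{s_1 tD^2}\cdots C_t\,e^{s_k tD^2}\,ds$$
is a finite sum. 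For the cases $k=1,2$ of the theorem one premultiplies by $\mathbb{X}_t$ or $\mathbb{X}_t^2 = tD^2 + C_t$, producing one or two extra $\sqrt t\,D$ factors to be analysed by the same techniques.

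The whole analysis rests on three uniform-on-compacta ingredients: (a) the trace-norm convergence $\|e^{stD^2} - P\|_\tau = \theta(st) \to 0$ as $t\to\infty$ for every $s>0$, from Proposition~\ref{SmoothProp} and~\eqref{NStheta}; (b) the elementary spectral bound $\|\sqrt t\,D\,e^{stD^2}\|_\op \le (2es)^{-1/2}$; (c) the algebraic vanishing $PD = DP = 0$. Combining (a) and (b) via $D(1-P) = D$ yields the key mixed estimate
$$\bigl\|\sqrt t\,D\,e^{stD^2}(1-P)\bigr\|_\tau \le \bigl\|\sqrt t\,D\,e^{stD^2/2}\bigr\|_\op\,\bigl\|e^{stD^2/2}(1-P)\bigr\|_\tau \le \frac{C\,\theta(st/2)}{\sqrt s},$$
which tends to $0$ for each fixed $s > 0$ and carries an integrable $s^{-1/2}$ singularity at the boundary of $\Delta_k$.

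Next, expanding each $e^{s_i tD^2} = P + K_i$ with $K_i := e^{s_i tD^2} - P$, and each $C_t$ into its four homogeneous pieces in $t^{1/2}$, produces finitely many integrand terms in each Duhamel summand. Any term with $\sqrt t\,D$ adjacent to a $P$ vanishes by~(c); any term containing a residual $K_i$ factor or a net negative power of $\sqrt t$ vanishes in the $\tau$-limit by (a)--(b) and dominated convergence on $\Delta_k$. The remaining $\sqrt t\,\{D,R_0\}$ pieces necessarily come in pairs; using $tD^2\,e^{stD^2} = \partial_s e^{stD^2}$ and an integration by parts in the simplex variables, each pair absorbs its factor $t$ and produces one extra $R_0$ sandwiched between adjacent projections. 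The resulting combinatorics converts the naive $R_0^2$ contribution into $(R_0P)^2$ via the algebraic identity $R_0^2 = R_0 P R_0 + R_0(1-P)R_0$: the piece $R_0(1-P)R_0$ is precisely what the boundary terms of the $\sqrt t\,\{D,R_0\}$-pair integrations by parts cancel. Summing the reassembled simplex integrals yields the exponential series $e^{(R_0P)^2}$, multiplied on the left by $P(R_0P)^k$ coming from the $k$ extra $\mathbb{X}_t$ factors and the outermost projection.

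The hardest step is this final combinatorial bookkeeping. Each $\sqrt t\,\{D,R_0\}$ factor is individually an unbounded operator, and one has to track the boundary contributions of successive integrations by parts in $\Delta_k$ while keeping the interplay between the operator-norm bound~(b) near $s_i = 0$ (where $\theta(s_i t)$ may be large, reflecting the behaviour of the spectral density function near zero) and the trace-norm decay~(a) for $s_i$ bounded below, under control. The fact that $R_0$ preserves $\mathrm{Im}\,P$ only up to commutators with $D$, forcing precisely the kind of cancellation described above, is the structural reason the limit has the form $P(R_0P)^k e^{(R_0P)^2}$ rather than the naively expected $P R_0^{2k+\cdots} P$, and this interplay is the technical heart of the new method developed in Section~\ref{heat-kernel}.
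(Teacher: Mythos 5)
Your high-level plan---Duhamel expansion of $e^{\mathbb X_t^2}$ around $e^{tD^2}$, exploiting that $\mathbb X_t$ contains no transversal derivatives, eliminating unpaired $\sqrt t\,D$ factors via integration by parts in the simplex variables, and reassembling the surviving terms into $P(R_0P)^k e^{(R_0P)^2}$---is the paper's strategy, and your observation that $R_0^2$ gets converted to $R_0 P R_0$ because the $\sqrt t\{D,R_0\}$-pairs integrate to $e^{\bar s tD^2}-1\to P-1$, cancelling the $R_0(1-P)R_0$ piece, is exactly the algebraic mechanism encoded in the paper's identity $A=C-B$ between Duhamel ``letters.'' However, the analytic core of your argument contains a genuine gap: you cannot pass to the limit by dominated convergence over the whole simplex.

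The mixed bound you derive, $\|\sqrt t\,D\,e^{stD^2}(1-P)\|_\tau\le C\,\theta(st/2)\,s^{-1/2}$, does not carry an integrable $s^{-1/2}$ singularity at $s=0$, because $\theta(u)=\trt(e^{uD^2}-P)$ itself diverges like $u^{-n/2}$ as $u\to 0$ by Weyl-type short-time asymptotics on the noncompact fibre. For $t\ge T$ the best $t$-uniform upper bound is $\theta(sT/2)\,s^{-1/2}\sim s^{-(n+1)/2}$, which is not integrable on $(0,1)$ once $\dim Z\ge 1$. Conversely, the purely operator-norm bound $\|\sqrt t\,D\,e^{stD^2}\|_{\op}\le Cs^{-1/2}$ does have an integrable singularity but does not tend to zero when $0$ lies in the continuous spectrum of $D$. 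So neither bound alone yields dominated convergence; you must use them in different regions of $\Delta^n$, and the thresholds must be chosen compatibly with the decay rate of $\theta$. This is precisely what the paper supplies and what is missing from your proposal: the decomposition $\Delta^n=\bigcup_I\Delta^n_{\bar s(t),I}$ by a $t$-dependent cutoff $\bar s(t)$, the operator-norm estimates of Lemma~\ref{HeatLem2} used to integrate out the ``small'' coordinates first, the trace-norm estimates of Lemma~\ref{HeatLem1} on the ``large'' coordinates, and crucially the existence result Lemma~\ref{barst}~(1), which shows---without any regularity hypothesis on the spectral density---that one can choose $\bar s(t)\to 0$ so that $\theta(t\bar s(t)/2)\,\bar s(t)^{-m_B/2}\to 0$. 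It is this balancing construction, not dominated convergence, that controls the interplay you correctly identify as the technical heart; without it the remainder terms in your expansion are not shown to vanish. The integration-by-parts bookkeeping you defer also needs to be done in this two-regime setting (the paper's Section~\ref{int.part} and the ``frozen variable'' formalism), since the boundary terms land on the codimension-one faces $s_i=\bar s(t)$ of the decomposed simplex rather than on $\partial\Delta^n$ itself.
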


Let $m_B=\dim B$. We denote the standard $n$-simplex by
$$
\Delta^n=\left\{(s_0,\dots,s_n)\in[0,1]^{n+1}| s_0+\dots +s_n=1\right\}
$$
and the standard volume form on $\Delta^n$ by $d^n(s_0,\dots,s_n)$,
so that $\Delta^n$ has total volume $\frac{1}{n!}$.

We split $\mathbb{X}_t$ as
\begin{equation}
\label{split}
\mathbb{X}_t=\sqrt t D+R_t
\end{equation}
 where $D=\frac{1}{2}(d^{Z,*}-d^Z)$ is a family of skew-adjoint elliptic first order differential operators along the fibres, and the remainder $R_t$ has coefficients in $\Lambda^{>0}(T^*B)$
\begin{equation}
R_t=R_0+t^{-\frac{1}{2}}R_{1} 
\end{equation}
with $R_0, R_1$ independent of $t$.
 From equation \eqref{AX},
$\mathbb{X}^2=tD^2+R_t\sqrt t D+\sqrt tDR_t+R^2_t$,
where the products are always  in $\O(B,\mathcal W)^{dR}$.

From \eqref{split}, and by Duhamel's principle 
\begin{multline}\label{Duhamel}
e^{\mathbb{ X}_t^2}=\sum_{n=0}^{m_B}\int_{\Delta^n}e^{s_0 tD^2}(\sqrt t R_tD+\sqrt t DR_t+R_t^2)e^{s_1tD^2}\dots\\
\dots (\sqrt t R_tD+\sqrt t DR_t+R_t^2)e^{s_ntD^2}d^n(s_0,\dots,s_n)\;.
\end{multline}

The strategy of the proof will be the following: we will decompose the standard simplex $\Delta^k$ into regions where certain simplex coordinates $s_i$ are smaller than a given $\bar s(t)$, and the remaining are larger.  Then we integrate over the small simplex coordinates before considering the limit as $t\to\infty$, and we make an opportune choice of $\bar s(t)$.  In this procedure, the  
heat operator $e^{\mathbb X^2}$ will split into a sum of various terms: the estimates of the resulting functions of $t D^2$ tell us which terms contribute at large time, and analyzing their combinatorics we compute its limit as $t\to\infty$. 
\subsection{Large time asymptotic: some estimates}
\label{estimates}
Let $0<\bar{s}(t)<1$ be a decreasing function of $t$, going to zero as $t\to \infty$.
We will fix~$\bar s$ in Lemma~\ref{barst} below.
Choose $T$ such that $\bar{s}(T)<\frac{1}{m_B+1}$.
\begin{lem}\label{HeatLem1}For~$c\ge 0$,
  there exists a constant~$C$ such that for all~$s>0$, $t>T$,
  \begin{align}
    \label{estOPc}
    \n{(\sqrt t\,D)^c e^{stD^2}}_{\op}&\le C\,s^{-\tfrac c2}\ ,
        &&\text{for }c\geq 0
    \\
    \label{estTR0}
    \n{e^{stD^2}-P}_\tau&=\theta(st)\;,  &&\text{for }\theta \text{ of \ref{NStheta}}\\
%    \label{estTRtot}
%    \n{e^{stD^2}}_\tau&\le C\;,\\
    \label{estTRc}
    \n{(\sqrt t\,D)^c e^{stD^2}}_{\tau}
    &\le C\,s^{-\tfrac c2}\theta\Bigl(\frac{st}2\Bigr)
    &&\text{for }c\geq 1\;.
  \end{align}
\end{lem}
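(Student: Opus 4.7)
The plan is to derive all three estimates by spectral calculus applied to the fibrewise operator $D=\tfrac12(d^{Z,*}-d^Z)$. Since $d^{Z,*}$ is the formal adjoint of $d^Z$, the operator $D$ is skew-adjoint and $D^2=-\tfrac14(d^Z+d^{Z,*})^2$ is self-adjoint and non-positive; by Hodge theory $\ker D=\ker(d^Z+d^{Z,*})$ coincides with the range of $P$, so $DP=0$ and $P$ commutes with every bounded function of $D^2$. In particular $e^{stD^2}$ is a contraction on $\mathcal W$ for every $s,t>0$.

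For \eqref{estOPc} I apply the functional calculus for $|D|=\sqrt{-D^2}$: the scalar function $\lambda\mapsto(\sqrt{t}\,\lambda)^c\,e^{-st\lambda^2}$ on $[0,\infty)$ is maximised at $\lambda=\sqrt{c/(2st)}$ and there takes the value $(c/(2s))^{c/2}e^{-c/2}$, so the operator norm is bounded by $C\,s^{-c/2}$ with $C=(c/2)^{c/2}e^{-c/2}$. The edge case $c=0$ is simply contractivity of the semigroup.

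For \eqref{estTR0} I observe that by functional calculus $e^{stD^2}-P$ is non-negative: the scalar function $\lambda\mapsto e^{-st\lambda^2}-\mathbf{1}_{\{0\}}(\lambda)$ vanishes at $\lambda=0$ and is strictly positive elsewhere. Consequently the $\tau$-trace norm of $e^{stD^2}-P$ coincides with its $\tau$-trace, which is $\theta(st)$ by the very definition~\eqref{NStheta}.

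For \eqref{estTRc} with $c\ge 1$ I split the semigroup into two equal halves and insert the correction $-P$ for free:
\begin{equation*}
  (\sqrt{t}\,D)^c\,e^{stD^2}
  =(\sqrt{t}\,D)^c\,e^{(st/2)D^2}\cdot\bigl(e^{(st/2)D^2}-P\bigr),
\end{equation*}
using $(\sqrt{t}\,D)^c P=0$. Then submultiplicativity $\|AB\|_\tau\le\|A\|_{\op}\,\|B\|_\tau$ together with \eqref{estOPc} applied to $s/2$ (to bound the first factor by $C'\,s^{-c/2}$) and \eqref{estTR0} applied to $s/2$ (to bound the second by $\theta(st/2)$) yields the desired bound up to an inessential factor $2^{c/2}$, absorbed into the final constant. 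The whole argument is essentially formal once the spectral calculus is in place; the only substantive observation is the automatic annihilation of $\ker D$ by $(\sqrt{t}\,D)^c$ when $c\ge 1$, which is what permits the insertion of $-P$ inside the semigroup splitting. I do not foresee a serious obstacle; the threshold $t>T$ and the auxiliary function $\bar s(t)$ play no role in these three estimates themselves but will be exploited in the subsequent simplex decomposition of~\eqref{Duhamel}.
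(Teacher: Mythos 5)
Your proposal is correct and coincides with the paper's (very terse) proof: the first two estimates are noted there as ``immediate,'' which is exactly the spectral-calculus computation you spell out, and for \eqref{estTRc} the paper uses precisely the same splitting $(\sqrt t\,D)^c e^{stD^2}=(\sqrt t\,D)^c e^{\frac{st}{2}D^2}\cdot(e^{\frac{st}{2}D^2}-P)$ together with $\|AB\|_\tau\le\|A\|_{\op}\|B\|_\tau$. The only added value in your write-up is making explicit that $(\sqrt t\,D)^cP=0$ justifies inserting $-P$ and that $e^{stD^2}-P\ge 0$ justifies identifying the $\tau$-norm with the $\tau$-trace in \eqref{estTR0}.
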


\begin{proof} The first two estimates are immediate. For the last one write
\begin{equation*}
  \n{(\sqrt t\,D)^c e^{stD^2}}_{\tau}
 % =\n{(\sqrt t\,D^2)^c e^{\frac{stD^2}{2}}\cdot \left(e^{\frac{stD^2}{2} }-P\right)}_\tau\leq \\
  \leq\n{(\sqrt t\,D)^c e^{\frac{stD^2}{2}}}_{\op}\cdot \n{e^{\frac{stD^2}{2} }-P}_\tau\leq C\,s^{-\tfrac c2} \theta\Bigl(\frac{st}2\Bigr)\;.\qedhere
\end{equation*}
\end{proof}

When the parameter $s$ is small and $c=0,1$,
we get better estimates by integrating over~$s$.
The case~$c=2$ is more complicated and will be treated later.

\begin{lem}\label{HeatLem2}
Let $c\in \{0,1\}$. There exists a constant~$C$ such that for all~$t>T$, we have
\begin{equation}
\label{OPint0}
\n{\int_0^{\bar{s}}(\sqrt t\,D)^ce^{stD^2}ds}_{\op}\leq \bar{s}^{1-\tfrac c2}\cdot C
\end{equation}
\end{lem}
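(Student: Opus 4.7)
The plan is essentially to move the operator norm inside the integral and then invoke the pointwise bound already established in Lemma~\ref{HeatLem1}. First I would note that for fixed $t > T$ the map $s \mapsto (\sqrt{t}\,D)^c e^{stD^2}$ is a continuous family of bounded operators on $\O_{L^2}^\bullet(E/B,\cF)$, so the Bochner integral is well-defined and the standard triangle inequality
\[
  \Bigl\| \int_0^{\bar s}(\sqrt t\,D)^c e^{stD^2}\,ds \Bigr\|_{\op}
  \;\le\; \int_0^{\bar s}\bigl\| (\sqrt t\,D)^c e^{stD^2} \bigr\|_{\op}\,ds
\]
holds.

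Next I would apply the operator-norm estimate \eqref{estOPc} from Lemma~\ref{HeatLem1}, which gives $\bigl\|(\sqrt t\,D)^c e^{stD^2}\bigr\|_{\op} \le C\, s^{-c/2}$ with a constant independent of $t>T$. Substituting and computing the elementary integral yields
\[
  \int_0^{\bar s} C\,s^{-c/2}\,ds \;=\; \frac{C}{1 - c/2}\,\bar s^{\,1-c/2},
\]
which is precisely the claimed bound (after absorbing $1/(1-c/2)$ into a new constant $C$).

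The only thing to note is why the hypothesis $c\in\{0,1\}$ is essential: the integral $\int_0^{\bar s} s^{-c/2}\,ds$ converges precisely when $c<2$, giving $\bar s$ for $c=0$ and $2\bar s^{1/2}$ for $c=1$. For $c=2$ the singularity of $s^{-1}$ at $s=0$ is non-integrable, which is exactly the reason the authors flag this case as more delicate and postpone it. So there is no genuine obstacle here beyond recognising that the integrability of the pointwise bound is the whole content of the lemma.
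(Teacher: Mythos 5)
Your proof is correct and follows exactly the paper's one-line argument: take the operator norm inside the Bochner integral and integrate the bound \eqref{estOPc} from Lemma~\ref{HeatLem1}, noting that $s^{-c/2}$ is integrable near $0$ precisely because $c<2$. Your added remark on why $c=2$ must be excluded is accurate and matches the paper's own comment that this case is "more complicated and will be treated later."
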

\begin{proof}
This follows by integrating~\eqref{estOPc}.
\end{proof}

\begin{rem}\label{unif.estimates} The estimates of Lemma \ref{HeatLem1} and  \ref{HeatLem2} can be made uniformly on compact subsets of $B$, as follows from the discussion after equation \eqref{NStheta}.
\end{rem}

\subsection{Choices for $\bar s(t)$}

\begin{lem}
\label{barst}  
  Recall $\theta(t)=\tr(e^{tD^2}-P)$, defined in \eqref{NStheta}.  
  \begin{enumerate}
\item \label{barS} There exists a choice of a monotone decaying function $\bar s=\bar s(t)$ such that 
$$\lim_{t\to \infty}\;\theta\left(\frac{t\bar{s}(t)}{2}\right)\cdot \left(\frac{1}{\bar{s}(t)}\right)^{\frac{m_B}{2}}=0 \ .$$
\item  \label{barS+}If  there exists $\a>0$ such that 
$\theta(t)=\mathcal{O}(t^{-\a})$ (that is, if $D$ has positive Novikov--Shubin invariants), then there exist a function $\bar s(t)$ and $\ep>0 $ such that
$$
\theta(t\bar{s}(t))\cdot \left(\frac{1}{\bar{s}(t)}\right)^\frac{m_B}{2}\leq t^{-\ep}, \text{ as } t\to\infty\ .
$$
\item  \label{barSdet} If $\ds\int_1^\infty \theta(t) \frac{dt}{t}<\infty$
  (that is, if $D$ is of determinant class, Definition \ref{defdetcl}), then  for each $ d\geq 0$
 there exists a choice of a monotone decaying function $\bar s=\bar s(t)$ such that 
 $$
   \int_1^\infty \theta(t\bar s(t))\left(\frac{1}{\bar s(t)}\right)^d \frac{dt}{t}<\infty\ .
 $$

\end{enumerate}

\end{lem}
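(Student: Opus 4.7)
My plan is to construct $\bar s(t)$ explicitly in each of the three cases, exploiting that the statement only requires existence and that $\theta$ is monotone decreasing to zero (as recorded in the paragraph following \eqref{NStheta}). All three constructions will produce piecewise constant monotone decreasing functions; smoothness is irrelevant in the sequel as $\bar s$ is only used to partition the standard simplex in the Duhamel expansion.

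For part \eqref{barS} I would use a simple diagonal choice: since $\theta(u)\to 0$, pick inductively $t_n \uparrow \infty$ so that $\theta(t_n/(2n)) < n^{-(m_B/2 + 1)}$, and set $\bar s(t) = 1/n$ on $[t_n, t_{n+1})$. Monotonicity of $\theta$ then yields
$$
\theta(t\bar s(t)/2)\,\bar s(t)^{-m_B/2} \le \theta(t_n/(2n))\cdot n^{m_B/2} < 1/n
$$
on each interval, which tends to zero. For part \eqref{barS+}, under the polynomial decay hypothesis $\theta(t) \le C t^{-\alpha}$, I would make the pure power-law ansatz $\bar s(t) = t^{-\beta}$, giving
$$
\theta(t\bar s(t))\,\bar s(t)^{-m_B/2} \le C\,t^{-\alpha(1-\beta) + \beta m_B/2},
$$
which decays like a negative power of $t$ for any $\beta \in (0,\,2\alpha/(2\alpha + m_B))$; a concrete choice is $\beta = \alpha/(2\alpha + m_B)$.

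Part \eqref{barSdet} is the main obstacle because one has no quantitative control on the rate at which the tail $\int_T^\infty \theta(u)\,du/u$ vanishes. My plan is a diagonal step-function construction that nonetheless extracts just enough decay. Choose $T_0 = 1 < T_1 < T_2 < \cdots$ with $T_{n+1} \ge 2T_n$ and
$$
\int_{T_n}^\infty \theta(u)\,\frac{du}{u} < 2^{-n(d+2)},
$$
which is possible by the integrability hypothesis. Declare $\bar s(t) = 2^{-n}$ on $I_n = [T_n 2^n,\, T_{n+1} 2^{n+1})$. Since the right endpoint of $I_n$ equals the left endpoint of $I_{n+1}$, the $I_n$ partition $[1,\infty)$ and $\bar s$ is monotone decreasing to zero. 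The substitution $u = t\cdot 2^{-n}$ leaves $dt/t$ invariant and carries $I_n$ onto $[T_n,\, 2T_{n+1})$, so the contribution of $I_n$ is bounded by
$$
2^{nd}\int_{T_n}^{2T_{n+1}} \theta(u)\,\frac{du}{u} \le 2^{nd}\cdot 2^{-n(d+2)} = 2^{-2n},
$$
which is summable in $n$ and yields the claimed integrability.
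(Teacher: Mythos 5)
Your proof is correct. Parts (2) and (3) follow essentially the same route as the paper: a pure power-law ansatz $\bar s(t)=t^{-\beta}$ for (2), and a piecewise-constant dyadic step function controlled by the tails of $\int\theta(u)\,du/u$ for (3), differing only in bookkeeping of indices. Part (1), however, is proved by a genuinely different construction: the paper defines $\bar s$ implicitly as the inverse of the map $\ep\mapsto 2\ep^{-1}\psi(\ep^{m_B/2+1})$ where $\psi=\theta^{-1}$, which arranges that $\theta(t\bar s(t)/2)\,\bar s(t)^{-m_B/2}=\bar s(t)$ exactly; your version uses a diagonal step-function argument, picking $t_n$ so that $\theta(t_n/(2n))<n^{-(m_B/2+1)}$ and freezing $\bar s=1/n$ on $[t_n,t_{n+1})$. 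Your argument is slightly more robust in that it never needs $\theta$ to be invertible (only monotone decreasing to zero), whereas the paper's is more compact and produces a clean identity rather than an estimate; both are perfectly valid. The observation that smoothness of $\bar s$ is irrelevant downstream is correct and matches the use of $\bar s$ solely to partition the simplex in the Duhamel expansion.
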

\begin{proof}
To prove \eqref{barS}, let $\psi $ be the inverse function of $\theta$. The
function $\ep\mapsto 2 \ep^{-1}\psi(\ep^{\frac{m_B}{2}+1})$ is monotone
decreasing (as product of decreasing factors) and therefore has an inverse
which we take to be the function $\bar s(t)$, that is $\ep=\bar s(t)$. We have
\begin{equation}
\label{ }
t=\bar{s}^{-1}(\ep):=2\ep^{-1}\cdot\psi\left(\ep^{ \frac{m_B}{2}+1}\right)
\end{equation}
As $\lim_{\ep\rightarrow 0}\overline{s}^{-1}(\ep)=\infty$ if follows that
$\lim_{t\to \infty }\bar{s}(t)=0$. Moreover, by
construction $$\theta\left(\frac{t\bar{s}(t)}{2}\right)\cdot
\left(\frac{1}{\bar{s}(t)}\right)^{\frac{m_B}{2}}=\bar s(t)\xrightarrow{t\to\infty} 0.
$$
To prove \eqref{barS+}, choose 
 $\beta $ such that $\left(1+\frac{\a}{\frac{m_B}{2}+1}\right)^{-1}<\b<1$, hence $1-\beta<\frac{\a\b}{\frac{m_B}{2}+1}$.   Define then $\bar{s}(t)=t^{\beta-1}$. It follows 
 
%\begin{equation}
%\theta(t\bar{s}(t))\cdot \left(\frac{1}{\bar{s}(t)}\right)^\frac{m_B}{2}\leq t^{-\a\b}(t^{1-\b})^{m_B}\leq t^{-\a\b}t^{\frac{2\a\b}{m_B+2}m_B}=t^{-\frac{m+1}{m+2}\a\b}=t^{-\ep}\;\;, \epsilon>0\ .
%\end{equation}
\begin{equation}
\theta(t\bar{s}(t))\cdot \left(\frac{1}{\bar{s}(t)}\right)^\frac{m_B}{2}\leq t^{-\a\b}(t^{1-\b})^{m_B}\leq t^{-\frac{m+1}{m+2}\a\b}=t^{-\ep}\;\;, \epsilon>0\ .
\end{equation}
To prove \eqref{barSdet}, we make the following construction. Choose $T_1<T_2<\dots$ such that for each $k$
$$
\int_{\frac{T_k}{2^k}}^\infty\theta (t)\frac{dt}{t}< 2^{-k(d+1)}\ .
$$
Then $\ds\int_{\frac{T_k}{2^k}}^\infty2^{dk}\cdot\theta (t)\frac{dt}{t}<2^{-k}$. Define $\bar s(t):= 2^{-k}$ for $T_k\leq t\leq T_{k+1}$.
Now 
  \begin{multline*}  \begin{aligned}
  \int_1^\infty \theta(t\bar s(t))\bar s(t)^{-d}\frac{dt}{t}=&\sum_k\int_{T_k}^{T_{k+1}}\theta(2^{-k}t)2^{dk}\frac{dt}{t}\\
& \leq \sum_{k=1}^\infty \int_{\frac{T_k}{2^k}}^\infty \theta(r)\cdot 2^{dk}\frac{dr}{r}<\sum_1^\infty 2^{-k}<\infty\ .
\end{aligned}
  \end{multline*}
 \end{proof}

Cases (2) and (3) of the lemma above will only be used in Sections
\ref{refined}. Note that in the following section we do not make any specific assumption on $\theta(t)$.

\subsection{Splitting Duhamel's formula}\label{spl-Duham}
For $n\leq m_B$, split $\Delta^n=\ds\bigcup_{I\subsetneq\{0,\dots n\}}\Delta^n_{\bar{s},I}$,
where 
\begin{equation*}
\Delta^n_{\bar{s},I}=\{(s_0,\dots , s_n)\mid s_i\leq \bar{s} \;\text{ if and only if }\;i\in I \}\;.
\end{equation*}
As $T>0$ is chosen such that $\bar s(T)<\frac1{m_B+1}$, we have that for all~$t>T$ and all~$(s_0, \dots ,s_n)\in \Delta^n$ there is at least one variable $s_i$ such that $s_i>\tilde s(t)$, so that~$\Delta_{\bar s(t),\{0,\dots,n\}}=\emptyset$.

For fixed~$n\ge 0$ and~$I\subset\{0,\dots,n\}$,
we will regard each of the $3^n$ terms in~\eqref{Duhamel} of the form 
\begin{equation}\label{term}
\int_{\Delta^n_{\bar{s},I}}e^{s_0 tD^2}S_1e^{s_1tD^2}\dots S_ne^{s_ntD^2}d^n(s_0, \dots ,s_n)
\end{equation}
separately,
where $S_i\in \{\sqrt t DR_t, \sqrt t R_tD, R_t^2\}$. 
We group $\sqrt t D$ and its neighbors which are functions of $D$ so that we have factors of the form 
$$
(\sqrt tD)^{c_i}e^{s_itD^2}\text{ with }c_i\in\{0,1,2\}\qquad\text{and}\quad
R_t^a\text{ with }a\in\{1,2\}\;.
$$
We write a single term as 
\begin{multline}\label{int-spez}
K(t,n,I;c_0,\dots,c_n;a_1,\dots,a_n)
=\int_{\Delta_{\bar s(t),I}^n}
(\sqrt t\,D)^{c_0}e^{s_0 tD^2}R_t^{a_1}(\sqrt t\,D)^{c_1}e^{s_1tD^2}R_t^{a_2}\dots \\
\dots (\sqrt t\,D)^{c_n}e^{s_ntD^2}\,d^n(s_0,\dots,s_n)
\end{multline}
with~$c_i\ge 0$ and~$a_i>0$ for all~$i$.
Note that by~\eqref{Duhamel} and the above,
we can write~$e^{\hat{\mathbb X}_t^2}$ as sum of terms
of the form~$K(n,I;c_0,\dots,c_n;a_1,\dots,a_n)$;
however,
not all possible combinations of~$c_i$ and~$a_i$ occur in this sum.
With this notation,
Duhamel's formula~\eqref{Duhamel} now becomes
\begin{equation}\label{SplitDuhamel}
  e^{\hat{\mathbb X}_t^2}
  =\sum_n\sum_I\sum_{
    \begin{smallmatrix}
      \phantom{a_0+}c_0+a_1\le 2\le c_0+a_1+c_1\\
      c_0+\dots+a_2\le 4\le c_0+\dots+c_2\\
      \vdots\\
      c_0+a_1+\dots+a_n+c_n=2n
    \end{smallmatrix}}
  K(t,n,I;c_0,\dots,c_n;a_1,\dots,a_n)\;.
\end{equation}
Using the estimates of the Lemmas~\ref{HeatLem1} and~\ref{HeatLem2},
we show that some of the terms above vanish for $t\to\infty$ in $\tau$-norm.

\begin{prop}\label{EasyTermsProp}
  As~$t\to\infty$,
  we have the following asymptotics with respect to the $\tau$-norm.
  \begin{enumerate}
  \item\label{Easy1}
    If~$I=\emptyset$, then
    $$\lim_{t\to\infty}K(t,n,I;c_0,\dots,c_n;a_1,\dots,a_n)=
    \begin{cases}
%  corrected R_t to R_0 in the above
     \tfrac1{n!}\,P\,R_0^{a_1}\,PR_0^{a_2}P\cdots P&\text{if~$c_0=\dots=c_n=0$,}\\  
      0	&\text{otherwise.}
    \end{cases}$$
  \item\label{Easy2}
    If~$I\ne \emptyset$ and~$c_i\in\{0,1\}$ for all~$i\in I$,
    then
	$$\lim_{t\to\infty}K(t,n,I;c_0,\dots,c_n;a_1,\dots,a_n)=0\;.$$
  \end{enumerate}
  Moreover in each of the cases considered above, for $t$ sufficiently large
  \begin{multline*}
  \left|K(t,n,I;c_0,\dots,c_n;a_1,\dots,a_n)-\lim_{t\to\infty}K(t,n,I;c_0,\dots,c_n;a_1,\dots,a_n)\right|_\tau\leq \\
  \leq C\left(\bar s(t)^{-\tfrac{m_B}2}
	\,\theta\biggl(\frac{\bar s(t)\,t}2\biggr)+ \n{P}_\tau\bar s(t)^{\frac{1}{2}}\right).
\end{multline*}
\end{prop}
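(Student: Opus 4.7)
The plan is to bound each summand $K=K(t,n,I;c_0,\dots,c_n;a_1,\dots,a_n)$ of~\eqref{SplitDuhamel} in $\tau$-norm by placing one operator factor in $\tau$-norm (via Lemma~\ref{HeatLem1}) while estimating the remaining factors in $op$-norm (via~\eqref{estOPc}); the coefficients $R_t$ are uniformly bounded for $t>T$. Since $\Delta^n_{\bar s,I}$ is not a product of intervals, I will bound the scalar integral of the integrand's norm over $\Delta^n_{\bar s,I}$ by the integral over the larger box $\prod_{i\in I}[0,\bar s]\times\prod_{j\notin I}[\bar s,1]$, which factorises and lets Lemma~\ref{HeatLem2} be applied on each interval $[0,\bar s]$. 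Throughout I will use $\sum_i c_i\le n\le m_B$, coming from the form-degree cutoff in the Duhamel expansion.

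For Case~(1a), where $I=\emptyset$ and all $c_i=0$, I split $e^{s_i tD^2}=P+(e^{s_i tD^2}-P)$ in each of the $n+1$ factors and expand. The all-$P$ term equals $\mathrm{vol}(\Delta^n_{\bar s,\emptyset})\cdot PR_t^{a_1}P\cdots R_t^{a_n}P$; since $\mathrm{vol}(\Delta^n_{\bar s,\emptyset})\to 1/n!$ at rate $O(\bar s)$ and $R_t\to R_0$ at rate $O(t^{-1/2})$, and since $\bar s\,t\to\infty$ by Lemma~\ref{barst}(1), this gives the limit in claim~(1) with error $O(\|P\|_\tau\,\bar s^{1/2})$. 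Each remaining expanded term carries at least one factor $e^{s_i tD^2}-P$ of $\tau$-norm $\le\theta(\bar s t)$ by~\eqref{estTR0}; bounding the other factors in $op$-norm yields an error $O(\theta(\bar s t))\le O(\bar s^{-m_B/2}\theta(\bar s t/2))$. For Case~(1b), where $I=\emptyset$ but some $c_j\ge 1$, I place $(\sqrt t D)^{c_j}e^{s_j tD^2}$ in $\tau$-norm and use $s_j>\bar s$ together with~\eqref{estTRc} to bound it by $C\bar s^{-c_j/2}\theta(\bar s t/2)$; bounding the other factors by $Cs_i^{-c_i/2}\le C\bar s^{-c_i/2}$ then gives $\|K\|_\tau\le C\bar s^{-m_B/2}\theta(\bar s t/2)$.

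For Case~(2), $I\ne\emptyset$ with $c_i\in\{0,1\}$ for $i\in I$, Lemma~\ref{HeatLem2} supplies $\bigl\|\int_0^{\bar s}(\sqrt t D)^{c_i}e^{stD^2}\,ds\bigr\|_{op}\le C\bar s^{1-c_i/2}$ for each $i\in I$. If some $j^*\notin I$ has $c_{j^*}\ge 1$, I put that factor in $\tau$-norm exactly as in Case~(1b) and obtain $\|K\|_\tau\le C\bar s^{-m_B/2}\theta(\bar s t/2)$. Otherwise $c_j=0$ for all $j\notin I$; I then pick any $j^*\notin I$ (possible since $I\subsetneq\{0,\dots,n\}$), bound $\|e^{s_{j^*}tD^2}\|_\tau\le\|P\|_\tau+\theta(\bar s t)$, note that the other factors with $j\notin I$ have $op$-norm $1$, and observe that the only $\bar s$-powers come from the integrations over $s_i$ with $i\in I$. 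This yields $\|K\|_\tau\le C\|P\|_\tau\,\bar s^{|I|/2}\le C\|P\|_\tau\,\bar s^{1/2}$. Combining both subcases gives the uniform estimate claimed in the proposition.

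The main technical obstacle will be the combinatorial bookkeeping required to establish a uniform constant $C$ across all sub-configurations $(n;I;c_0,\dots,c_n;a_1,\dots,a_n)$, and to confirm the precise form of the limit in Case~(1a)---in particular that the $R_t\to R_0$ correction of order $t^{-1/2}$ is absorbed by the claimed error $\|P\|_\tau\,\bar s^{1/2}$, which is guaranteed by $\bar s\,t\to\infty$ from Lemma~\ref{barst}(1). Uniformity on compact subsets of $B$ will follow from Remark~\ref{unif.estimates}.
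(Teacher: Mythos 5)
Your proof is correct and follows essentially the same strategy as the paper's: place one factor in $\tau$-norm (via Lemma~\ref{HeatLem1}) and the rest in operator norm, integrate the small variables $s_i$ ($i\in I$) over $[0,\bar s]$ using Lemma~\ref{HeatLem2}, and control everything with the bound $\sum c_i\le m_B$ together with the choice of $\bar s(t)$ from Lemma~\ref{barst}. The one place where your bookkeeping differs from the paper's is Case~(1a): you expand all $n+1$ factors $e^{s_itD^2}=P+(e^{s_itD^2}-P)$ simultaneously and isolate the all-$P$ term, whereas the paper replaces them one at a time; both give the same limit and error. You also make explicit the two corrections the paper passes over silently — $\vol(\Delta^n_{\bar s,\emptyset})\to 1/n!$ at rate $O(\bar s)$ and $R_t\to R_0$ at rate $O(t^{-1/2})$, the latter absorbed into $\|P\|_\tau\bar s^{1/2}$ because $\bar s(t)\,t\to\infty$ — which is a welcome clarification rather than a deviation.

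Two small points worth flagging for precision. First, your phrasing about bounding the scalar integral over $\Delta^n_{\bar s,I}$ by an integral over a product of intervals should really be read as the paper's formula~\eqref{integr}: one parametrises the $n$-simplex, integrates the small variables $s_{i}$, $i\in I$, over $[0,\bar s]$ as an outer iterated integral, and bounds the remaining inner integral over the large variables by the volume $\le 1/(n-|I|)!$; the box $\prod[0,\bar s]\times\prod[\bar s,1]$ is one dimension too high to literally contain the simplex. Second, in Case~(2) with all $c_j=0$ for $j\notin I$, your final estimate should carry a term $\theta(\bar s t)\bar s^{1/2}$ in addition to $\|P\|_\tau\bar s^{1/2}$; since $\theta(\bar s t)\bar s^{1/2}\le\bar s^{-m_B/2}\theta(\bar s t/2)$, this is still dominated by the first summand in the claimed uniform estimate, so the conclusion stands.
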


\begin{proof}
  For~\eqref{Easy1},
  we note first that~$a_1+\dots+a_n\le m_B$.
  Because each~$a_i\ge 1$, this implies~$n\le m_B$ and
  \begin{equation}\label{CnBound}
    c_0+\dots+c_n=2n-a_1-\dots-a_n\le n\le m_B\;.
  \end{equation}

  Assume first that~$c_i\ne 0$ for some~$0\le i\le n$,
  for simplicity~$c_0\ne 0$.
  Because~$I=\emptyset$, we have~$s_j\ge\bar s(t)$ for all~$j$
  if~$(s_0,\dots,s_n)\in\Delta^n_{\bar s(t),\emptyset}$.
  Using Lemma~\ref{HeatLem1}, we find that
  \begin{multline}\label{rest1}
    \n{(\sqrt t\,D)^{c_0}e^{s_0 tD^2}R_t^{a_1}(\sqrt t\,D)^{c_1}e^{s_1tD^2}
      \cdots(\sqrt t\,D)^{c_n}e^{s_ntD^2}}_\tau\\
    \begin{aligned}
      &\le\n{(\sqrt t\,D)^{c_0}e^{s_0 tD^2}}_\tau\cdot\n{R_t^{a_1}}_{\op}
	\cdot\n{(\sqrt t\,D)^{c_1}e^{s_1 tD^2}}_{\op}\cdots
	\n{(\sqrt t\,D)^{c_n}e^{s_n tD^2}}_{\op}\\
      &\le C_1\,s_0^{-\tfrac{c_0}2}\,\theta\biggl(\frac{s_0t}2\biggr)
	\,\n{R_t^{a_1}}_{\op}
	\cdot C_2\,s_1^{-\tfrac{c_1}2}\cdots C_3\,s_n^{-\tfrac{c_n}2}
      \le C\,\bar s(t)^{-\tfrac{m_B}2}
	\,\theta\biggl(\frac{\bar s(t)\,t}2\biggr)
    \end{aligned}
  \end{multline}
  for some constant~$C$. % that changes from term to term.
  
  Chose ~$\bar s(t)$ as in Lemma~\ref{barst} (1).
  Then
	$$\lim_{t\to\infty}\n{(\sqrt t\,D)^{c_0}e^{s_0 tD^2}R_t^{a_1}
		(\sqrt t\,D)^{c_1}e^{s_1tD^2}\cdots
    		(\sqrt t\,D)^{c_n}e^{s_ntD^2}}_\tau
	=0$$
  uniformly on~$\Delta_{\bar s(t),\emptyset}^n$.
  Hence in this case,
	$$\lim_{t\to\infty}
	\n{K(t,n,\emptyset;c_0,\dots,c_n;a_1,\dots,a_n)}_\tau=0\;.$$

  If~$I=\emptyset$ and~$c_0=\dots=c_n=0$,
  we compute
  \begin{multline*}
    \n{\bigl(e^{s_0 tD^2}-P\bigr)\,R_t^{a_1}\,e^{s_1tD^2}
      \cdots e^{s_ntD^2}}_\tau\\
    \le\n{e^{s_0 tD^2}-P}_\tau\cdot\n{R_t^{a_1}}_{\op}
	\cdot\n{e^{s_1 tD^2}}_{\op}\cdots
	\n{(\sqrt t\,D)^{c_n}e^{s_n tD^2}}_{\op}
    \le C\,\theta\biggl(\frac{\bar s(t)\,t}2\biggr)\;,
  \end{multline*}
  which tends to~$0$ as~$t\to\infty$.
  By repeating this computation successively for~$s_1$, \dots, $s_n$,
  we find that
   \begin{multline*}
    \lim_{t\to\infty}e^{s_0 tD^2}\,R_0^{a_1}\,e^{s_1tD^2}\cdots e^{s_ntD^2}
    =P\,R_0^{a_1}\,\lim_{t\to\infty}e^{s_1tD^2}\,R_t^{a_2}\,e^{s_2tD^2}
	\cdots e^{s_ntD^2}\\
    =\dots=P\,R_0^{a_1}\,P\cdots P
  \end{multline*}
% corrected R_t to R_0 in the above
  uniformly
  on~$\Delta_{\bar s(t),\emptyset}^n$ with respect to the $\tau$-norm.
  Because
	$$\lim_{t\to\infty}\vol(\Delta_{\bar s(t),\emptyset}^n)
	=\vol(\Delta^n)=\frac1{n!}\;,$$
  integrating over~$\Delta_{\bar s(t),\emptyset}^n$ proves the remaining
  case in~\eqref{Easy1}.

  Now assume that~$I\ne\emptyset$ and
  put~$I:=\{i_1, \dots i_r\}$
  and $\{0, \dots , n\}\setminus I=:\{j_0,\dots , j_{n-r}\}\ne\emptyset$
  because by our choice of~$T$, we have~$r\le n$.
  As in~\eqref{Easy2},
  we assume~$c_{i_1}$, \dots, $c_{i_r}\in\{0,1\}$.
  We rewrite~\eqref{int-spez} as
  \begin{multline}\label{integr}
    K(t,n,I;c_0,\dots,c_n;a_1,\dots,a_n)
    =\underbrace{\int_0^{\bar{s}(t)}\dots\int_0^{\bar{s}(t)}}_{r\text{ times}}
    \int_{\{(s_{j_0},\dots,s_{j_{n-r}})
	\mid(s_0,\dots,s_n)\in\Delta^n_{\bar{s}(t),I}\}}\\
    (\sqrt t\,D)^{c_0}e^{s_0 tD^2}R_t^{a_1}(\sqrt t\,D)^{c_1}e^{s_1tD^2}
	\cdots(\sqrt t\,D)^{c_n}e^{s_ntD^2}\,d^{n-r}(s_{j_0},\dots,s_{j_{n-r}})
        \,ds_{i_r}\cdots ds_{i_1}\;.
  \end{multline}
  To estimate the $\tau$-norm,
  we take the $\tau$-norm of~$(\sqrt t\,D)^{c_{j_0}}e^{s_{j_0}tD^2}$
  and the operator norms of the remaining factors.
  
  Assume first that there exists $j\in \{0, \dots , n\}\setminus I$ such that $c_j\ge 1$, say $c_{j_0}\ge 1$. Then by \ref{estTRc},
  \begin{multline}\label{rest3}
    \n{K(t,n,I;c_0,\dots,c_n;a_1,\dots,a_n)}_\tau\\
    \begin{aligned}
      &\le\int_0^{\bar{s}(t)}\dots\int_0^{\bar{s}(t)}
	\int_{\{(s_{j_0},\dots,s_{j_{n-r}})
	\mid(s_0,\dots,s_n)\in\Delta^n_{\bar{s}(t),I}\}}\\
      &\kern4em
	C\,s_0^{-\tfrac{c_0}2}\cdots s_n^{-\tfrac{c_n}2}
	\,\theta\biggl(\frac{s_{j_0}t}2\biggr)
	\,d^{n-r}(s_{j_0},\dots,s_{j_{n-r}})
	\,ds_{i_r}\cdots ds_{i_1}\\
      &\stackrel{(\ref{CnBound})}{\le}\int_0^{\bar{s}(t)}\dots\int_0^{\bar{s}(t)}
	\vol{}^{n-r}\{(s_{j_0},\dots,s_{j_{n-r}})
	\mid(s_0,\dots,s_n)\in\Delta^n_{\bar{s}(t),I}\}\\
      &\kern4em
        C\,\bar s(t)^{-\frac{m_B}{2}}\,s_{i_1}^{-\tfrac{c_{i_1}}2}
	\cdots s_{i_r}^{-\tfrac{c_{i_r}}2}
	\,\theta\biggl(\frac{\bar s(t)\,t}2\biggr)
	\,ds_{i_r}\cdots ds_{i_1}\\
      &\le C\,\bar s(t)^{-\frac{m_B}{2}}\,\theta\biggl(\frac{\bar s(t)\,t}2\biggr)\;.
    \end{aligned}
  \end{multline}
  Again,
  this tends to~$0$ as~$t\to\infty$ by our choice of~$\bar s(t)$.
  
  If $c_{j_0}=\cdots= c_{j_p}=0$, replace $e^{s_{j_0}tD^2}$ by
  $(e^{s_{j_0}tD^2}-P)+P$ and estimate it by taking its $\tau$-norm  and the
  operator norm of the remaining factor, which does not contribute with
  negative powers of $\bar s(t)$, since $c_{j_0}=\cdots= c_{j_p}=0$:
    \begin{multline*}
    \n{K(t,n,I;c_0,\dots,c_n;a_1,\dots,a_n)}_\tau\\
    \begin{aligned}
      & \le \int_0^{\bar{s}(t)}\dots\int_0^{\bar{s}(t)}
    \int_{\{(s_{j_0},\dots,s_{j_{n-r}})
	\mid(s_0,\dots,s_n)\in\Delta^n_{\bar{s}(t),I}\}} \left(\n{e^{s_{j_0}tD^2}-P}_\t+\n{P}_\t\right) \cdot \n{R_t^{a_1}}_{op} \cdots \\
	&\kern4em \cdots \n{e^{s_n t D^2}}_{op} d^{n-r}(s_{j_0},\dots,s_{j_{n-r}}) ds_{i_r}\cdots ds_{i_1}
	\\
      &\le\int_0^{\bar{s}(t)}\dots\int_0^{\bar{s}(t)}
	\int_{\{(s_{j_0},\dots,s_{j_{n-r}})
	\mid(s_0,\dots,s_n)\in\Delta^n_{\bar{s}(t),I}\}}\\
      &\kern4em
	C\,s_{i_1}^{-\tfrac{c_{i_1}}2}
	\cdots s_{i_r}^{-\tfrac{c_{i_r}}2} \,\left(\theta\biggl(\frac{s_{j_0}t}2\biggr)+\n{P}_\t\right)
	\,d^{n-r}(s_{j_0},\dots,s_{j_{n-r}})
	\,ds_{i_r}\cdots ds_{i_1}\\
	&\le \bar s(t)^{r- \frac{c_{i_1}}{2}-\dots -\frac{c_{i_r}}{2} } \left(\theta\biggl(\frac{\bar s(t)t}2\biggr)+\n{P}_\t\right) \vol{}^{n-r}\{(s_{j_0},\dots,s_{j_{n-r}})	\mid(s_0,\dots,s_n)\in\Delta^n_{\bar{s}(t),I}\}\ .
    \end{aligned}
  \end{multline*}
  which goes to $0$ as $t\to\infty$, because $r-\frac{c_{i_1}}{2}-\dots -\frac{c_{i_r}}{2}>0$.
  
\end{proof}

\subsection{Integration by parts}\label{int.part}
To estimate the $\t$-norm of \eqref{int-spez} if~$c_i=2$ for some~$i\in I$
using the lemmas in Section \ref{estimates},
we proceed to eliminate all terms of the form $tD^2 e^{s_{i_a}t D^2}$,
$i_a\in I$ by integration by parts.

As a preparation, let~$g\colon[0,\infty)^{n-r+1}\to\C$ be a function of class $C^1$,
let~$q=n-r$ and assume that~$0<\sigma<s_0$
and~$c>(q+1)\bar s+\sigma$.
We first want to compute the derivative of the integral of~$g$
over the interior part of the simplex where all variables are at least~$\bar s$,
with respect to the size~$c-\sigma$ of the simplex.
We find
\begin{multline*}
  -\frac\partial{\partial\sigma}
  	\int_{\{\,(x_0,\dots,x_q)\in(c-\sigma)\Delta^q\mid x_0,
	  \dots,x_q\ge\bar s\,\}}g(x_0,\dots,x_q)\,d^q(x_0,\dots,x_q)\\
    =-\frac\partial{\partial\sigma}\int_{\bar s}^{c-\sigma-q\bar s}
        \int_{\bar s}^{c-\sigma-(q-1)\bar s-x_0}
	\cdots\int_{\bar s}^{c-\sigma-\bar s-x_0-\dots-x_{q-2}}\\
	g(x_0,\dots,x_{q-1},c-\sigma-x_0-\dots-x_{q-1})
	\,dx_{q-1}\cdots dx_0
	\end{multline*}
\begin{multline*}
  \begin{aligned}
    &\kern6em=\biggl(\int_{\bar s}^{c-\sigma-(q-1)\bar s-x_0}
	\cdots\int_{\bar s}^{c-\sigma-\bar s-x_0-\dots-x_{q-2}}\\
    &\kern6em
	g(x_0,\dots,x_{q-1},c-\sigma-x_0-\dots-x_{q-1})
	\,dx_{q-1}\cdots dx_1\biggr)\biggr|_{x_0=c-\sigma-q\bar s}\\
    &\qquad+\dots+\int_{\bar s}^{c-\sigma-q\bar s}
	\cdots\int_{\bar s}^{c-\sigma-2\bar s-x_0-\dots-x_{q-3}}\\
    &\kern6em
	g(x_0,\dots,x_{q-1},c-\sigma-x_0-\dots-x_{q-1})\bigr|_{x_{q-1}=c-\sigma-\bar s-x_0-\dots-x_{q-2}}
	\,dx_{q-2}\cdots dx_0\\
    &\qquad+\int_{\bar s}^{c-\sigma-q\bar{s}}\int_{\bar s}^{c-\sigma-(q-1)\bar s-x_0}
	\cdots\int_{\bar s}^{c-\sigma-\bar s-x_0-\dots-x_{q-2}}\\
    &\kern6em
	\frac{\partial g}{\partial x_q}(x_0,\dots,x_{q-1},c-\sigma-x_0-\dots-x_{q-1})
	\,dx_{q-1}\cdots dx_0\;.\\
  \end{aligned}
\end{multline*}
The first~$q$ terms arise by formal differentiation of an integral with respect
to its upper limit.
The first~$q-1$ of them vanish because there remains at least one inner
integral over an interval of length~$0$.
Thus, we are left with
\begin{multline*}
  \frac\partial{\partial\sigma}
  	\int_{\{\,(x_0,\dots,x_q)\in(c-\sigma)\Delta^q\mid x_0,
	  \dots,x_q\ge\bar s\,\}}g(x_0,\dots,x_q)\,d^q(x_0,\dots,x_q)\\
  =-\int_{\{\,(x_0,\dots,x_{q-1})\in(c-\sigma-\bar s)\Delta^{q-1}\mid x_0,
	  \dots,x_{q-1}\ge\bar s\,\}}g(x_0,\dots,x_{q-1},\bar s)\,d^{q-1}(x_0,\dots,x_{q-1})\\
  -\int_{\{\,(x_0,\dots,x_q)\in(c-\sigma)\Delta^q\mid x_0,
	  \dots,x_q\ge\bar s\,\}}
	\frac{\partial g}{\partial x_q}(x_0,\dots,x_q)
	\,d^q(x_0,\dots,x_q)
\end{multline*}
if~$q\ge 1$,
and since~$(c-\sigma)\Delta^0=\{c-\sigma\}$, we have
\begin{equation*}
  -\frac\partial{\partial\sigma}g(c-\sigma)
  =\frac{\partial g}{\partial x_0}(c-\sigma)
\end{equation*}
if~$q=0$.
The last simplex variable~$x_q$ plays a special role in this computation,
so we will call it the ``target variable'' later on.
By symmetry of integration,
we may choose any of the simplex variables to be our target variable.

Now assume that the term~$tD^2e^{s_{i_a}tD^2}$ occurs somewhere in
one of the factors~\ref{int-spez} with~$i_a\in I$.
At this point, we integrate only over~$s_{i_a}$ and~$s_{j_0}$, \dots,
$s_{j_q}$ and keep all other small variables~$s_{i_b}$ with~$b\ne a$ fixed.
Recall that there exists at least one~$j_0\notin I$.
We choose~$s_{j_0}$ as target variable.
By the above, from the equality
\begin{multline*}
\int_{\{\,(s_{j_0},\dots,s_{j_q})\mid
	s_{j_0},\dots,s_{j_q}\ge\bar s\,,\;s_0+\dots+s_n=1\,\}}\\
	\begin{aligned}
	&\kern6em 
	\ldots R_t\,e^{s_{i_a}tD^2}\,R_t\ldots (\sqrt tD)^{c_{j_0}}e^{s_{j_0}tD^2}\ldots
	\,d^q(s_{j_0},\dots,s_{j_q})\Bigr|_{s_{i_a}=0}^{\bar s}=\\
	&=\qquad\int_0^{\bar s}\frac\partial{\partial s_{i_a}}
      \int_{\{\,(s_{j_0},\dots,s_{j_q})\mid
	s_{j_0},\dots,s_{j_q}\ge\bar s\,,\;s_0+\dots+s_n=1\,\}}\\
    &\kern6em
	\ldots R_t\,e^{s_{i_a}tD^2}\,R_t\ldots (\sqrt tD)^{c_{j_0}}e^{s_{j_0}tD^2}\ldots
	\,d^q(s_{j_0},\dots,s_{j_q})\,ds_{i_a}\\
	\end{aligned}
\end{multline*}
we obtain
\begin{multline*}
  \int_0^{\bar s}\int_{\{\,(s_{j_0},\dots,s_{j_q})\mid
	s_{j_0},\dots,s_{j_q}\ge\bar s\,,\;s_0+\dots+s_n=1\,\}}\\
  \begin{aligned}	
    &\kern6em
	\ldots R_t\,tD^2\,e^{s_{i_a}tD^2}\,R_t
	\ldots (\sqrt tD)^{c_{j_0}}e^{s_{j_0}tD^2}\ldots
	\,d^q(s_{j_0},\dots,s_{j_q})\,ds_{i_a}\\ 
        &=\int_{\{\,(s_{j_0},\dots,s_{j_q})\mid
	s_{j_0},\dots,s_{j_q}\ge\bar s\,,\;
	s_0+\dots+\widehat{s_{i_a}}+\dots+s_n=1-\bar s\,\}}\\
    &\kern6em
	\ldots R_t\,e^{\bar stD^2}\,R_t
	\ldots (\sqrt tD)^{c_{j_0}}e^{s_{j_0}tD^2}\ldots
	\,d^q(s_{j_0},\dots,s_{j_q}) \\
    &\qquad-\int_{\{\,(s_{j_0},\dots,s_{j_q})\mid
	s_{j_0},\dots,s_{j_q}\ge\bar s\,,\;
	s_0+\dots+\widehat{s_{i_a}}+\dots+s_n=1\,\}}\\
    &\kern6em
	\ldots R_t^2\ldots (\sqrt tD)^{c_{j_0}}e^{s_{j_0}tD^2}\ldots
	\,d^q(s_{j_0},\dots,s_{j_q})\\
    &\qquad-\int_0^{\bar s}\int_{\{\,(s_{j_1},\dots,s_{j_q})\mid
	s_{j_1},\dots,s_{j_q}\ge\bar s\,,\;
	s_0+\dots+\widehat{s_{j_0}}+\dots+s_n=1-\bar s\,\}}\\
    &\kern6em
	\ldots R_t\,e^{s_{i_a}tD^2}\,R_t
	\ldots (\sqrt tD)^{c_{j_0}}e^{\bar stD^2}\ldots
	\,d^{q-1}(s_{j_1},\dots,s_{j_q})\,ds_{i_a}\\
    &\qquad-\int_0^{\bar s}\int_{\{\,(s_{j_0},\dots,s_{j_q})\mid
	s_{j_0},\dots,s_{j_q}\ge\bar s\,,\;
	s_0+\dots+\dots+s_n=1\,\}}\\
    &\kern6em
	\ldots R_t\,e^{s_{i_a}tD^2}\,R_t
	\ldots (\sqrt tD)^{c_{j_0}+2}e^{s_{j_0}tD^2}\ldots
	\,d^q(s_{j_0},\dots,s_{j_q})\,ds_{i_a}
  \end{aligned}
\end{multline*}
if~$q>0$, and a similar expression without the third term on the right hand
side if~$q=0$.

Let us now extend our notation in~\eqref{int-spez}
to incorporate those situations where some of the~$s_i$ are ``frozen''
to~$\bar s(t)$.
If~$I$ and~$J$ are disjoint subsets of~$\{0,\dots,n\}$
with~$I=\{i_1,\dots, i_r\}$ and~$\{0,\dots,n\}\setminus(I\cup J)
=:\{k_1,\dots,k_q\}\ne\emptyset$,
we write
\begin{multline*}
  K(t,n,I,J;c_0,\dots,c_n;a_1,\dots,a_n)
  =\underbrace{\int_0^{\bar s(t)}\dots\int_0^{\bar s(t)}}_{r\text{ times}}
  \int_{\{(s_{k_0},\dots,s_{k_q})
	\mid(s_0,\dots,s_n)\in\Delta^n_{\bar{s},I}\}}\\
  (\sqrt t\,D)^{c_0}e^{s_0 tD^2}R_t^{a_1}(\sqrt t\,D)^{c_1}e^{s_1tD^2}
	\cdots (\sqrt t\,D)^{c_n}e^{s_ntD^2}
	\,d^q(s_{k_0},\dots,s_{k_q})\,ds_{i_r}\dots ds_{i_1}\;,
\end{multline*}
where~$s_j=\bar s(t)$ is ``frozen'' for all~$j\in J$.
Then our computations above become
\begin{multline}\label{PartIntResult}
  K(t,n,I\cup\{i_a\},J;\dots,\underbrace{2}_{i_a},\dots,c_{k_0},\dots;
	\dots,a_{i_a},a_{i_a+1},\dots)\\
  =\begin{cases}
     \begin{aligned}
       &K(t,n,I,J\cup\{i_a\};\dots,0,\dots,c_{k_0},\dots;
	\dots,a_{i_a},a_{i_a+1},\dots)\\
       &\quad-K(t,n-1,I,J;\dots,\dots,c_{k_0},\dots;
	\dots,a_{i_a}+a_{i_a+1},\dots)\\
       &\quad+K(t,n,I\cup\{i_a\},J\cup\{k_0\};\dots,0,\dots,c_{k_0},\dots;
	\dots,a_{i_a},a_{i_a+1},\dots)\\
       &\quad+K(t,n,I\cup\{i_a\},J;\dots,0,\dots,c_{k_0}+2,\dots;
	\dots,a_{i_a},a_{i_a+1},\dots)
     \end{aligned}
     &\text{if~$q>0$, and}\vadjust{\medskip}\\
     \begin{aligned}
       &K(t,n,I,J\cup\{i_a\};\dots,0,\dots,c_{k_0},\dots;
	\dots,a_{i_a},a_{i_a+1},\dots)\\
       &\quad-K(t,n-1,I,J;\dots,\dots,c_{k_0},\dots;
	\dots,a_{i_a}+a_{i_a+1},\dots)\\
       &\quad+K(t,n,I\cup\{i_a\},J;\dots,0,\dots,c_{k_0}+2,\dots;
	\dots,a_{i_a},a_{i_a+1},\dots)
     \end{aligned}
     &\text{if~$q=0$.}
   \end{cases}
\end{multline}

We now continue to perform partial integration,
thus eliminating all terms with~$c_i=2$ for some~$i\in I$.
The remaining terms are all
of the form~$K(t,n,I,J;c_0,\dots,c_n;a_1,\dots,a_n)$ with~$c_i\in\{0,1\}$
for~$i\in I$ and~$c_i\ge 0$ for~$i\notin I$.
During partial integration, the sum of the~$c_i$ never increases,
so we still have
	$$c_0+\dots+c_n\le m_B$$
as in~\eqref{CnBound}.
We can now prove for the resulting terms an analogue of Proposition~\ref{EasyTermsProp}.

\begin{prop}\label{OtherTermsProp}
  Assume that~$c_i\in\{0,1\}$ for all~$i\in I$,
  then with respect to the $\tau$-norm,
  we have
  \begin{multline*}
    \lim_{t\to\infty}K(t,n,I,J;c_0,\dots,c_n;a_1,\dots,a_n)\\=
    \begin{cases}
      \tfrac1{(n-|J|)!}\,P\,R_0^{a_1}\,P\cdots P
        &\text{if~$I=\emptyset$ and~$c_0=\dots=c_n=0$, and}\\
      0	&\text{otherwise.}
    \end{cases}
  \end{multline*}
   Moreover in each of the cases considered above, for $t$ sufficiently large
  \begin{multline}\label{esti}
  \left|K(t,n,I,J;c_0,\dots,c_n;a_1,\dots,a_n)-\lim_{t\to\infty}K(t,n,I,J;c_0,\dots,c_n;a_1,\dots,a_n)\right|_\tau\leq \\
  \leq C\left(\bar s(t)^{-\tfrac{m_B}2}
	\,\theta\biggl(\frac{\bar s(t)\,t}2\biggr)+ \n{P}_\tau\bar s(t)^{\frac{1}{2}}\right).
\end{multline}
\end{prop}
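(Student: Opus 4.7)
The plan is to adapt the proof of Proposition~\ref{EasyTermsProp} by treating each frozen exponential $e^{\bar s(t)tD^2}$ (for $j\in J$) as just another factor which is bounded by $1$ in operator norm and approximates $P$ in $\tau$-norm at rate $\theta(\bar s(t)t)$. Since $J$ is disjoint from $I$ and from the set of variables over which we now integrate, all bounds from Lemmas~\ref{HeatLem1} and~\ref{HeatLem2} apply verbatim; only the combinatorial bookkeeping of volumes changes.

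First I would split into the two cases $I=\emptyset$ and $I\neq\emptyset$, exactly as in the proof of Proposition~\ref{EasyTermsProp}. In the case $I=\emptyset$ with $c_0=\dots=c_n=0$, I would successively replace each factor $e^{s_itD^2}$ (including the frozen ones, where $s_i=\bar s(t)$) by $(e^{s_itD^2}-P)+P$, and use $\|e^{s_itD^2}-P\|_\tau=\theta(s_it)\le\theta(\bar s(t)t)$ on one factor at a time, bounding all the others in operator norm. The integrand thus converges uniformly in $\tau$-norm to $P\,R_0^{a_1}P\cdots P$, while the remaining integration region (an $(n-|J|)$-dimensional simplex obtained from $\Delta^n_{\bar s(t),\emptyset}$ by freezing the $|J|$ coordinates indexed by $J$) has volume converging to $\tfrac1{(n-|J|)!}$. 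If instead $I=\emptyset$ but some $c_j\neq 0$, the same estimate \eqref{rest1} as in Proposition~\ref{EasyTermsProp} works, where the frozen factors contribute only operator norm bounds $\bar s(t)^{-c_j/2}$ with $\sum c_j\le m_B$, producing the bound $C\bar s(t)^{-m_B/2}\theta(\bar s(t)t/2)$.

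In the case $I\neq\emptyset$, I would mimic the last part of the proof of Proposition~\ref{EasyTermsProp}. Since $J$ and $I$ are disjoint and $\{0,\dots,n\}\setminus(I\cup J)$ is nonempty (as guaranteed by our notation), we can still pick a target variable $s_{k_0}\notin I\cup J$ on which to take $\tau$-norm. Two sub-cases arise: either some $c_{k_0}=1$ with $k_0\notin I\cup J$, in which case the estimate \eqref{estTRc} applied to that factor, together with operator-norm bounds on the remaining factors and integration of $s_{i_a}^{-c_{i_a}/2}$ over $[0,\bar s(t)]$, yields the bound $C\bar s(t)^{-m_B/2}\theta(\bar s(t)t/2)$; or all $c_k=0$ for $k\notin I\cup J$, in which case we split one factor as $e^{s_{k_0}tD^2}=(e^{s_{k_0}tD^2}-P)+P$ and estimate each piece separately. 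In the second piece, the integrations over $s_{i_a}\in[0,\bar s(t)]$ for $i_a\in I$ produce factors $\bar s(t)^{1-c_{i_a}/2}\ge\bar s(t)^{1/2}$ because $c_{i_a}\in\{0,1\}$; multiplied by $\|P\|_\tau$ and bounded volumes, this delivers exactly the $\|P\|_\tau\bar s(t)^{1/2}$ term appearing in \eqref{esti}.

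The main point to be careful about is the second sub-case of $I\neq\emptyset$: one must check that at least one integration over a small variable $s_{i_a}$ actually contributes a nontrivial positive power of $\bar s(t)$, because without it the $\|P\|_\tau$ piece would not decay. This is guaranteed by the hypothesis $c_{i_a}\in\{0,1\}$ on all $i_a\in I$, so that each $\int_0^{\bar s(t)}s_{i_a}^{-c_{i_a}/2}\,ds_{i_a}$ produces $\bar s(t)^{1-c_{i_a}/2}\ge\bar s(t)^{1/2}$; combined with $|I|\ge 1$ the product is at least $\bar s(t)^{1/2}$. Apart from this point, the argument is essentially identical to that of Proposition~\ref{EasyTermsProp}, with the frozen factors requiring no new estimates.
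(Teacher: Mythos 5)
Your proposal follows the same route as the paper's: the paper's own proof simply says the argument is ``proved precisely as Proposition~\ref{EasyTermsProp}'', with the frozen variables playing the same role as the large variables, and that is exactly what you do—treating each $e^{\bar s(t)tD^2}$ as a factor of operator norm $\le 1$ that converges to $P$ in $\tau$-norm at rate $\theta(\bar s(t)t)$, then reusing the three-way case split ($I=\emptyset$ with all $c_i=0$; $I=\emptyset$ with some $c_i>0$; $I\ne\emptyset$) and the volume count. Your observation that $\{0,\dots,n\}\setminus(I\cup J)\ne\emptyset$ guarantees a choice of target variable is the right one.

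One direction is reversed in your last paragraph: since $0<\bar s(t)<1$ and $1-c_{i_a}/2\ge 1/2$ for $c_{i_a}\in\{0,1\}$, integrating gives $\bar s(t)^{1-c_{i_a}/2}\le\bar s(t)^{1/2}$ (not $\ge$), and the product over $i_a\in I$ is $\bar s(t)^{r-\sum c_{i_a}/2}\le\bar s(t)^{1/2}$ (not ``at least''). The inequality you need for the $\|P\|_\tau$-contribution to decay is precisely this $\le$, which your own exponent computation delivers; the $\ge$ as written would contradict the conclusion. Also, in the first sub-case of $I\ne\emptyset$ you should allow $c_{k_0}\ge 1$ rather than $c_{k_0}=1$, since the hypothesis $c_i\in\{0,1\}$ constrains only $i\in I$, and after the integrations by parts a free or frozen index can carry $c_i=2$. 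Neither issue affects the substance of the argument: apart from these slips, your proof is the one the paper intends.
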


\begin{proof}
  This is proved precisely as Proposition~\ref{EasyTermsProp}.
  If~$I=\emptyset$ and~$c_0=\dots=c_n=0$,
  we successively replace~$e^{s_itD^2}$ by~$P$
  and use~\eqref{estTR0} of Lemma~\ref{HeatLem1}.
  Because~$s_j=\bar s(t)$ for all~$j\in J$, and $\bar s(t)\to 0$ as~$t\to\infty$,
  we are left with an integral over an $(n- |J|)$-simplex
  of volume~$\frac 1{(n-|J|)!}$.

%  If~$I\ne\emptyset$ or there exists~$c_i\ne 0$ for some~$i\in\{0,\dots,n\}$,
%  the arguments in the proof of~\ref{EasyTermsProp} show
%  that~$\n{K(t,n,I,J;c_0,\dots,c_n;a_1,\dots,a_n)}_\tau\to 0$.
%\end{proof}
 If~$I=\emptyset$ and there exists~$c_i> 0$ for some~$i\in\{0,\dots,n\}$,
  the arguments in the proof of~\ref{EasyTermsProp} show
  that~$\n{K(t,n,I,J;c_0,\dots,c_n;a_1,\dots,a_n)}_\tau\to 0$.
  
  If $I\ne \emptyset$, then we proceed again exactly as in the proof of~\ref{EasyTermsProp}, because the frozen variables play the same role as large variables.
  
  \end{proof}

\subsection{Proof of Theorem~\protect{\ref{HeatThm2}}}\label{Proof1Sec}

\begin{proof}[Proof of Theorem~\ref{HeatThm2}]

We begin with~$k=0$.

We apply Duhamel's formula to~$e^{\mathbb X_t^2}$,
split the result as in~\eqref{SplitDuhamel} and use partial integration
iteratively to get rid of all terms with~$c_i=2$ for some~$i\in I$.

Thus if~$i\in I$ and~$c_i=2$,
the corresponding term~$K(t,n,I,J;\dots)$ is replaced by three or four
terms as in~\eqref{PartIntResult}.
In the first two of these terms,
the corresponding variable~$s_i$ is frozen,
whereas the remaining terms still involve an integral over~$s_i\in(0,\bar s)$,
but with~$c_i=0$.
These integrals persist if we perform more partial integrations,
so the remaining terms do not contribute to the limit as~$t\to\infty$
by Proposition~\ref{OtherTermsProp}.

We also note that whenever any term contains~$c_i\in\{0,1\}$ for some~$i\in I$
or~$c_i>0$ for some~$i\notin I$,
then this fact is not altered by partial integration,
so these terms also do not contribute in the limit
by Proposition~\ref{OtherTermsProp}.
Thus, only those terms $K(t,n,I;c_0,\dots,c_n;a_1,\dots,a_n)$
in equation~\eqref{SplitDuhamel} contribute to the limit
where
	$$c_i=\begin{cases}2&\text{if~$i\in I$, and}\\
		0&\text{if~$i\notin I$.}\end{cases}
	$$
Whenever~$c_i=2$ and~$i\in I$, the corresponding part of the integrand
in such a term
must be of the form
\begin{equation}\label{LetterA}
	\dots e^{s_{i-1}tD^2}\,R_t\,\sqrt t\,D\,e^{s_itD^2}\,\sqrt t\,D\,R_t
		\,e^{s_{i+1}tD^2}\dots\;,
\end{equation}
whence~$0<i<n$, $i-1$, $i+1\notin I$ and~$a_i=a_{i+1}=1$.
On the other hand,
if~$i-1$, $i\notin I$, the corresponding part of the integrand takes
the form
\begin{equation}\label{LetterB}
	\dots e^{s_{i-1}tD^2}\,R_t^2\,e^{s_itD^2}\dots\;,
\end{equation}
whence~$a_i=2$ in this case.
Thus,
the summands~$K(t,n,I;\dots)$ that contribute to the limit
are in one to one correspondence with finite words in the free ring generated by
the two letters~$A$ and~$B$,
where each~$A$ stands for an occurrence of~\eqref{LetterA}
and each~$B$ stands for~\eqref{LetterB}.
Two subsequent terms overlap at~$e^{s_itD^2}$ with~$i\notin I$,
and the empty word represents~$e^{tD^2}$.
Note however that the mapping from this ring to $\Omega^\bullet(B,\End_{\mathcal A}\Omega^\bullet(E/B;\mathcal F))$ that assigns to each monomial a term in the Duhamel expansion of the heat kernel is only additive, not a homomorphism.
Because each letter contains~$R_t$ twice,
its degree with respect to~$B$ is at least~$2$,
so there cannot be more than~$\frac{m_B}2$ letters.

Partial integration now has the effect of replacing one letter~$A$
by~$C-B$, where the letter~$C$ stands for
\begin{equation}\label{LetterC}
	\dots e^{s_{i-1}tD^2}\,R_t\,e^{\bar s(t)\,tD^2}
	\,R_t\,e^{s_itD^2}\dots\;,
\end{equation}
modulo terms that vanish in $\tau$-norm as~$t\to\infty$.
As in Proposition~\ref{OtherTermsProp},
the word~$C^n$ converges to
	$$\frac1{n!}\,P\,(R_0P)^{2n}\;.$$
We thus find that
\begin{equation}\label{LimitFormula}
  \lim_{t\to\infty}e^{\mathbb X_t^2}
  =\lim_{t\to\infty}\sum_{n=0}^{\left\lfloor\tfrac{m_B}2\right\rfloor}(A+B)^n
  =\lim_{t\to\infty}\sum_{n=0}^{\left\lfloor\tfrac{m_B}2\right\rfloor}C^n
  =P\,e^{(R_0P)^2}\;.
\end{equation}
This completes the proof of Theorem~\ref{HeatThm2} for~$k=0$.

The analogues of~\eqref{int-spez} and~\eqref{SplitDuhamel} for~$k=1$
are given by
\begin{equation}\label{SplitDuhamel2}
  \mathbb X_t\,e^{\mathbb X_t^2}
  =\sum_n\sum_I\sum_{
    \begin{smallmatrix}
      \phantom{a_1+c_1}a_0\le 1\le a_0+c_0\phantom{\dots}\\
      a_0+c_0+a_1\le 3\le a_0+\dots+c_1\\
      \vdots\\
      a_0+\dots+a_n\le 2n+1=a_0+\dots+c_n
    \end{smallmatrix}}
  K(t,n,I;c_0,\dots,c_n;a_0,\dots,a_n)\;,
\end{equation}
where
\begin{multline*}
K(t,n,I;c_0,\dots,c_n;a_0,\dots,a_n)
=\int_{\Delta_{\bar s(t),I}^n}R_t^{a_0}
(\sqrt t\,D)^{c_0}e^{s_0 tD^2}R_t^{a_1}(\sqrt t\,D)^{c_1}e^{s_1tD^2}\dots \\
\dots (\sqrt t\,D)^{c_n}e^{s_ntD^2}\,d^n(s_0,\dots,s_n)
\end{multline*}
with~$c_i\ge 0$ and~$a_i>0$ for all~$i$.

We perform partial integration as before.
By the analogue of Proposition~\ref{OtherTermsProp},
the remaining terms can again be described by letters~$A$, $B$ and~$C$
as above,
where we have to delete the leftmost~$e^{s t\,D^2}\,R_t$
from the first letter in each word.
Counting the number of free simplex variables correctly,
we find that
	$$\lim_{t\to\infty}C^{n+1}=\frac 1{n!}\,P(R_0P)^{2n+1}\;.$$
With these modifications,
the limit in the $\tau$-norm can now be described as
\begin{equation}\label{LimitFormula2}
  \lim_{t\to\infty}\mathbb X_t\,e^{\mathbb X_t^2}=
 \lim_{t\to\infty}\sum_{n=0}^{\left\lfloor\tfrac{m_B-1}2\right\rfloor}(A+B)^{n+1} =PR_0P\,e^{(R_0P)^2}\;.
\end{equation} 

For~$k=2$, we similarly consider the Duhamel expansion of~$\mathbb X_t\,e^{\mathbb X_t^2}\,\mathbb X_t$, leaving the details to the reader. We still work with letters~$A$, $B$, $C$ as before, where we delete both the leftmost~$e^{stD^2}$ from the first letter and the rightmost~$e^{stD^2}$ from the last letter in each word. For the limit in the $\tau$-norm, we obtain
\begin{equation}\label{LimitFormula3}
 \lim_{t\to\infty}\mathbb X_t\,e^{\mathbb X_t^2}\,\mathbb X_t
 =\lim_{t\to\infty}\sum_{n=0}^{\left\lfloor\tfrac{m_B-2}2\right\rfloor}(A+B)^{n+2}
 =PR_0P\,e^{(R_0P)^2}\,R_0P\;.
\end{equation}

\qedhere
\end{proof}

\section{\texorpdfstring{$L^2$-}{L2}index theorems}
\label{secL2indthm}
%%%%%% section B I S M U T   L O T T 

\subsection{\texorpdfstring{$L^2$-}{L2}Bismut--Lott theorem}
\label{secBL}
Our first application of Theorem \ref{HeatThm2} is  the $L^2$-Bismut--Lott index theorem. This was proved by Gong and Rothenberg in \cite{GR} assuming extra regularity hypothesis.

Let $(\tilde E, \Gamma)\to B$ be a family of normal coverings, and $M\to B$ be
a family of finitely generated Hilbertian $\Gamma$-$\mathcal A$-bimodules as
in Definition \ref{def1.2}. We use here the Euler grading. The following
theorem proves that the $L^2$-Kamber--Tondeur class of the flat  bundle of
$\cA$-modules $H_{L^2}(E/B;\cF)=\bigoplus_k(-1)^k H_{L^2}^k(E/B;\cF)\to B$ is
equal to the Becker--Gottlieb transfer of the class of $\cF$ (see definitions in  Section \ref{L2KamTon}).
\begin{theorem}\label{L2BL} If $\dim Z$ is even,
$$
\ch{}^\circ_\tau(H_{L^2}(E/B;\cF))=\int_{E/B}e(TZ)\ch{}^\circ_\tau(\cF)\;\;\; \in \, H^{odd}_{dR}(B).
$$
\end{theorem}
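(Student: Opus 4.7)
We transgress between the small and large time limits of the closed odd form
$$F_\tau(t) = \sqrt{2\pi i}\,\Phi\,\Str_\tau(\mathbb{X}_t\, e^{\mathbb{X}_t^2})$$
defined in~\eqref{Ftau}. By the transgression formula~\eqref{trF^}, the cohomology class $[F_\tau(t)] \in H^{odd}_{dR}(B)$ is independent of $t > 0$, so it suffices to evaluate both the $t \to 0$ and $t \to \infty$ limits and show they recover the right- and left-hand sides of the theorem respectively.

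The large time limit is supplied directly by Theorem~\ref{HeatThm2} with $k=1$:
$$\lim_{t\to\infty}\mathbb{X}_t\,e^{\mathbb{X}_t^2} = PR_0P\,e^{(R_0P)^2}$$
in $\tau$-norm. Under the Hodge identification $H^\bullet_{L^2}(E/B;\cF) \cong \ker(d^Z+d^{Z,*})$, the projection $P$ intertwines $\nabla^{\cW}$ and $\nabla^{\cW,*}$ with the Gauss--Manin connection $\nabla^H$ and its adjoint, so $PR_0P = \tfrac{1}{2}\omega^H$ with $\omega^H = \nabla^{H,*} - \nabla^H$. The identity $PR_0P\,(R_0P)^{2k} = (PR_0P)^{2k+1}$, an immediate consequence of $P^2 = P$, rewrites the limit as $PR_0P\,e^{(PR_0P)^2}$. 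Taking $\Str_\tau$ and applying $\sqrt{2\pi i}\,\Phi$ then recovers precisely the Kamber--Tondeur representative of $\ch^\circ_\tau(H_{L^2}(E/B;\cF))$ for the graded Hilbertian $\cA$-module bundle $H^\bullet_{L^2}(E/B;\cF) \to B$, with the alternating-sign supertrace matching the convention of Section~\ref{flKT}.

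The small time limit is a local family index computation, adapted to the $L^2$-setting. Since $e^{\mathbb{X}_t^2} = e^{-\mathbb{A}_t^2}$ and $\mathbb{A}_t$ is of Bismut type, its heat kernel concentrates exponentially near the fibre diagonal as $t \to 0$. The $L^2$-trace $\trt$ is obtained by integrating the diagonal kernel against $\tau$ over a single fundamental domain for the $\Gamma$-action, a computation that retains the locality required for Getzler rescaling. Consequently, the Bismut--Lott small-time argument \cite{BL} transports essentially verbatim and gives the pointwise limit $\lim_{t\to 0} F_\tau(t) = \int_{E/B} e(TZ)\,\ch^\circ_\tau(\cF)$. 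Equating the two limits in $H^{odd}_{dR}(B)$ concludes the proof.

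The genuine obstacle is the large time limit, settled by Theorem~\ref{HeatThm2}: in the $L^2$-setting there is no spectral gap at zero for $D$, so the classical arguments relying on smoothness of the $(0,\varepsilon)$-spectral projection or on lower bounds for the Novikov--Shubin invariants (as in \cite{HL, BH2, GR}) are unavailable. The simplex-splitting Duhamel expansion at the threshold $\bar s(t)$ developed in Section~\ref{heat-kernel} bypasses this, using only the intrinsic regularity $\theta(t) \to 0$ inherent in the topological nature of the kernel of $D$.
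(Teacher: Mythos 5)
Your proof is correct and follows essentially the same route as the paper: transgress $F_\tau(t)$ using \eqref{trF^}, obtain the right-hand side from the standard Bismut--Lott small-time local computation (which carries over since $\trt$ is given by integration over a fundamental domain), and obtain the left-hand side from Theorem~\ref{HeatThm2} with $k=1$ together with the identification $PR_0P=\tfrac12(\nabla^{H,*}-\nabla^H)$. The extra manipulation $PR_0P\,e^{(R_0P)^2}=PR_0P\,e^{(PR_0P)^2}$ via $P^2=P$ that you make explicit is implicit in the paper's terse "it follows immediately," so you have added nothing beyond what the paper's argument already contains.
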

\begin{proof} Let $f(a)=a \exp(a^2)$, and let $F_\t(t):=\sqrt{2\pi i}\Phi
  \Str_\tau(f(\mathbb X_{t}))$ as defined in \eqref{Ftau}, with $\Phi(\a)=
  (2\pi i)^{-\frac{|\a|}{2}}\a$. $F_\t(t)$ is a closed, real odd form on $B$,
  and by \eqref{trF^}, its cohomology class does not depend on $t$. The small
  time limit of $F_\t(t)$ can be obtained as in \cite[Theorem 3.16]{BL} and
  gives, as $t\to 0$ 
\begin{equation*}
\label{tto0}
F_\t(t)=\begin{cases}{}\ds\int_{E/B} e(TZ,
    \nabla^{TZ})\ch{}^\circ_\t(\mathcal{F}, g^{\mathcal F})+\mathcal O(t)\;,\; &
    \text{if } \dim Z \text{ is even}
 \\\mathcal O(\sqrt{t})\;,\;& \text{if } \dim Z \text{ is odd}\ .\end{cases}
\end{equation*}
On the other hand, Theorem \ref{HeatThm2} implies
$$\lim_{t\to\infty}\Str_\tau\left( \mathbb {X}_t e^{\mathbb{X}_t^2}\right)=
\Strt\left( P(R_0P)e^{(R_0P)^2}\right)\ .$$ 
Since $PR_0P=\frac{1}{2}(\nabla
^{H,*}-\nabla^H)$, it follows immediately that
\begin{equation*}
\lim_{t\to\infty} F_\t(t)= \ch{}^\circ_\t(H_{L^2}(E/B;\cF), g^{H_{L^2}})\ . \qedhere
\end{equation*}
\end{proof}
We get a family version of Atiyah's $L^2$-index theorem as a special case:
\begin{cor}
In the situation of Example \ref{ex1.2}, when $\cF=F\otimes\mathcal{L} $ with
$\mathcal{L}=\tilde E\times_{\Gamma} l^2(\Gamma)$ comes from a finite
dimensional flat vector bundle  $F\to E$, then 
$$
\ch{}^\circ_\t(H_{L^2}(E/B;\cF), g^{H_{L^2}})=\ch{}^\circ(H(E/B;F), g^{H})\ .
$$
\end{cor}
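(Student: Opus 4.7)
The plan is to reduce the corollary to Theorem~\ref{L2BL} combined with the classical Bismut--Lott theorem \cite{BL} applied to the finite-dimensional flat bundle $F$, by observing that the $\ell^2(\Gamma)$-twist in $\cF=F\otimes\mathcal L$ contributes trivially to the Kamber--Tondeur forms of $\cF$.

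First, I would equip $\mathcal L=\widetilde E\times_\Gamma\ell^2(\Gamma)$ with the family of admissible metrics $g^{\mathcal L}$ coming from the standard scalar product on $\ell^2(\Gamma)$. Since this scalar product is $\Gamma$-invariant, $g^{\mathcal L}$ is $\nabla^{\mathcal L}$-parallel, and hence $\omega(\mathcal L,g^{\mathcal L})=0$. Taking the product metric $g^\cF=g^F\otimes g^{\mathcal L}$ on $\cF$, the Leibniz rule for $\omega(\cdot,g^{\cdot})=(g^{\cdot})^{-1}(\nabla g^{\cdot})$ gives $\omega(\cF,g^\cF)=\omega(F,g^F)\otimes\mathrm{id}_{\mathcal L}$. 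Since the $\tau$-trace factorizes as $\tr\otimes\tau$ and $\tau(\mathrm{id}_{\ell^2(\Gamma)})=1$, one obtains
\[
  \trt\bigl((\omega(\cF,g^\cF)/2)^k\bigr)=\tr\bigl((\omega(F,g^F)/2)^k\bigr)
\]
as differential forms on $E$. This gives $c_{k,\tau}(\cF,g^\cF)=c_k(F,g^F)$ and therefore $\ch{}^\circ_\tau(\cF)=\ch{}^\circ(F)$ in $H^\bullet_{dR}(E)$.

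Feeding this identity into Theorem~\ref{L2BL} on the left-hand side and into the classical Bismut--Lott theorem on the right-hand side yields
\[
  \ch{}^\circ_\tau\bigl(H_{L^2}(E/B;\cF)\bigr)
  =\int_{E/B}e(TZ)\,\ch{}^\circ_\tau(\cF)
  =\int_{E/B}e(TZ)\,\ch{}^\circ(F)
  =\ch{}^\circ\bigl(H(E/B;F)\bigr)
\]
in $H^\bullet_{dR}(B)$, which proves the corollary at the level of cohomology classes on $B$. The main obstacle, if one insists on reading the equality literally at the level of differential forms determined by the Hodge metrics $g^{H_{L^2}}$ and $g^H$, is that by Fell's absorption $\cF\cong\underline V\otimes\mathcal L$ as flat $\mathcal N(\Gamma)$-module bundles, so $H_{L^2}(E/B;\cF)\cong V\otimes H_{L^2}(\widetilde E/B)$ has infinite-dimensional fibres and is intrinsically different from the finite-dimensional bundle $H(E/B;F)$. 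To upgrade to an equality of forms I would rather try to show $F_\tau(t)=F(t)$ for all $t>0$, using the isometry $\Omega^\bullet_{L^2}(\widetilde E/B;\pi^*F)\cong\Omega^\bullet(E/B;\cF)$ of Example~\ref{ex1.2} together with the trace factorization above to match the trace integrands fibrewise, and then take the large-time limit on both sides via Theorem~\ref{HeatThm2}.
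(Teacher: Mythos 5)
Your cohomology-level argument is correct and is surely the intended proof: the paper states the corollary without a separate proof, so it is meant to follow from Theorem~\ref{L2BL} and the classical Bismut--Lott theorem via the observation that the $\ell^2(\Gamma)$-twist contributes trivially to Kamber--Tondeur classes. Your calculation that $\omega(\mathcal L,g^{\mathcal L})=0$ for the $\Gamma$-invariant metric, hence $\omega(\cF,g^\cF)=\omega(F,g^F)\otimes\id$ and $\trt\bigl((\omega(\cF,g^\cF)/2)^k\bigr)=\tr\bigl((\omega(F,g^F)/2)^k\bigr)$ (using $\tau(\id_{\ell^2(\Gamma)})=1$), is exactly the right mechanism, and combining the two index theorems gives the equality in $H^\bullet_{dR}(B)$. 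This is almost certainly how the corollary is meant to be read; the appearance of $g^{H_{L^2}}$ and $g^H$ in the notation is most naturally understood as specifying the representatives, with the asserted equality being one of cohomology classes, consistent with Theorem~\ref{L2BL} itself being stated in $H^{odd}_{dR}(B)$.

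Your closing proposal to upgrade to an identity of forms by proving $F_\tau(t)=F(t)$ for all $t>0$ does not work, and it is worth pinpointing why. Under the isometry $\Omega^\bullet_{L^2}(\widetilde E/B;\pi^*F)\cong\Omega^\bullet(E/B;\cF)$ of Example~\ref{ex1.2}, the family superconnection $\mathbb X_t$ becomes the $\Gamma$-equivariant lift $\widetilde{\mathbb X}_t$ of the operator $\underline{\mathbb X}_t$ that defines $F(t)$. The Schwartz kernel $\underline K_t$ of $\underline{\mathbb X}_t e^{\underline{\mathbb X}_t^2}$ on the closed fibre $Z_b$ is the $\Gamma$-periodization of the kernel $\widetilde K_t$ on $\widetilde Z_b$; along the diagonal,
\[
  \underline K_t(x,x)=\widetilde K_t(\tilde x,\tilde x)
     +\sum_{\gamma\neq e}\widetilde K_t(\tilde x,\gamma\tilde x)\;,
\]
while $\Str_\tau$ keeps only the $\gamma=e$ summand integrated over a fundamental domain. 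The off-diagonal contributions do not vanish at finite $t$, so $F_\tau(t)$ and $F(t)$ differ by a nontrivial (exact) form; one cannot ``match the trace integrands fibrewise.'' The equality you want is precisely the one that holds after integrating the transgression formula~\eqref{trF^} from $t=0$, i.e.\ only at the level of cohomology classes, and that is what your first argument establishes. If one does insist on a form-level statement, the difference of the two representatives is controlled by a transgression quantity of rho type, in the spirit of Section~7, not by an identity $F_\tau(t)=F(t)$.
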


\subsection{\texorpdfstring{$L^2$-}{L2}index theorem for the family of signature operators}\label{L2index}
Our next application of Theorem \ref{HeatThm2} is the $L^2$-index theorem for the families of signature operators twisted by a flat duality bundle. 

\begin{theorem}\label{L2indT}
Let $Z\to E\stackrel{p}{\to}B$ be a smooth fibre bundle with connected even-dimensional closed fibres, let $\mathcal F\to E$ be a flat bundle of $\mathcal A$-modules as in \eqref{coeff-bundle} or \eqref{coeff-bundle2} with a flat duality structure.
Then
\begin{equation} 
\label{ind-formula}
p_\t (\nabla^H, J^H)=\int_{E/B}L(E/B)\,p_\t(\nabla^{\cF}, J^\cF)\;\;\;\in  H_{dR}^*(B)\ .
\end{equation}
\end{theorem}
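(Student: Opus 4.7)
The plan is to run the standard McKean--Singer heat equation argument using the superconnection $\mathbb A$, with the transgression form $p_\tau(t)=\Phi\,\tr_\tau(J\,e^{-\mathbb A_t^2})\in\Omega^*(B)$ of \eqref{McKt} serving as the interpolation between the two sides of \eqref{ind-formula}. By the transgression identity $\frac d{dt}p_\tau(t)=d\,\eta_\tau(t)$, the class $[p_\tau(t)]\in H^*_{dR}(B)$ is independent of $t$, so it suffices to identify its two limits.

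First, I would compute the large time limit, which is where the main new input of this paper enters. Using the grading disambiguation remark, $e^{-\mathbb A_t^2}=e^{\mathbb X_t^2}$ holds in $\Omega(B,\mathcal W)^{dR}$, so $J$ is inserted as a zero-order operator into a word over which Theorem \ref{HeatThm2} (case $k=0$) applies. This yields, in $\tau$-norm,
\begin{equation*}
\lim_{t\to\infty}\tr_\tau\bigl(J\,e^{\mathbb X_t^2}\bigr)
=\tr_\tau\bigl(J\,P\,e^{(R_0P)^2}\bigr).
\end{equation*}
Since $J$ commutes with $d^Z+d^{Z,*}$, it commutes with $P$, and $JP=PJP$ descends on $\ker(d^Z+d^{Z,*})\cong H^\bullet_{L^2}(E/B;\cF)$ to the involution $J^H$. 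Applying $\Phi$ and invoking Lemma \ref{comput.p} identifies this limit with $p_\tau(\nabla^H,J^H)$.

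Next, I would compute the small time limit via the local families index theorem for the twisted signature operator. Using the decomposition $\mathbb A=\mathbb B_{1/4}-\tfrac14\hat c_j\psi_j$ of \eqref{AB} and the fact that $\mathbb B_t$ is the Bismut superconnection for the signature operator twisted by $(\cF,\nabla^{\cF,u})$, the standard Getzler rescaling (carried out in the $L^2$-setting just as in \cite{BL,Lo2}, since the estimates are purely fibrewise and the trace $\tau$ plays no role in the local computation) gives
\begin{equation*}
\lim_{t\to 0}p_\tau(t)=\int_{E/B}L(TZ,\nabla^{TZ})\,p_\tau(\nabla^{\cF},J^\cF),
\end{equation*}
where the zero-order term $-\tfrac14\hat c_j\psi_j$ combines with the unitary supertrace $\ch_\tau(\nabla^{\cF^+})-\ch_\tau(\nabla^{\cF^-})$ produced by $\mathbb B_{1/4}$ to yield $p_\tau(\nabla^{\cF},J^\cF)$ via \eqref{class.p}; here the splitting $\cF=\cF^+\oplus\cF^-$ is the eigendecomposition of $J^\cF$ (in the $\ep=1$ case) or its complexified analogue (in the $\ep=-1$ case).

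Equating the two limits of the cohomologous family $p_\tau(t)$ proves \eqref{ind-formula}. The main obstacle is really the large time limit, which requires the absence of any spectral gap assumption and is precisely what Theorem \ref{HeatThm2} provides; the small time analysis is a routine adaptation of Bismut's heat-kernel proof, since $\tau$ enters only after the local computation is complete and the needed regularity of $e^{-\mathbb A_t^2}$ is already guaranteed by Proposition \ref{SmoothProp}.
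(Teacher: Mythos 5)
Your proposal is correct and follows essentially the same argument as the paper: constancy of $[p_\tau(t)]$ from the transgression \eqref{McKt}; the small time limit is the local index computation for the twisted signature operator as in \cite{Bi,BL,Lo2}, which is purely fibrewise and unaffected by $\tau$; the large time limit comes from Theorem \ref{HeatThm2} (case $k=0$) and is identified with $p_\tau(\nabla^H,J^H)$ via Lemma \ref{comput.p}. One small slip worth fixing: you write that $J$ commutes with $d^Z+d^{Z,*}$, but by Lemma \ref{lemdE*} one has $J d^Z J^{-1}=-d^{Z,*}$ and $Jd^{Z,*}J^{-1}=-d^Z$, so $J$ \emph{anti}-commutes with the signature operator $d^Z+d^{Z,*}$ and commutes with $D=\tfrac12(d^{Z,*}-d^Z)$; either of these facts implies $J$ commutes with $D^2=-\tfrac14(d^Z+d^{Z,*})^2$ and hence with the projection $P$, so the conclusion you draw is still correct.
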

\begin{proof}
By \eqref{McKt}, the cohomology class of $\tr{}_\t(J e^{-\mathbb{A}_t^2})$ is constant with respect to $t$. 
The small time limit of $\tr{}_\t(J e^{-\mathbb{A}_t^2})$ is computed as in \cite[Proposition 31]{Lo2}, \cite[3.16]{BL} and \cite{Bi} 
and gives 
$$
\lim_{t\rightarrow 0}\tr{}_\t(J e^{-\mathbb{A}_t^2})=\int_{E/B}L(E/B)\, p_\t(\nabla^{\cF}, J^\cF)\ .
$$
The large time limit is provided by Theorem \ref{HeatThm2}: 
$$\lim_{t\rightarrow \infty}\trt(J e^{-\mathbb{A}_t^2})=\trt(J Pe^{(R_0P)^2})\ .
$$ 
%By \eqref{omegaH},  
%$PR_0P=\frac{1}{2}P(\nabla^{\cW,*}-\nabla^\cW)P=\frac{1}{2}((\nabla^H)^*-\nabla^H)=\frac{1}{2}(J[\nabla^H,J])$. 
Comparing with the computation of Lemma \ref{comput.p}, we then have
 \begin{equation}
\lim_{t\rightarrow \infty}\trt(J e^{\mathbb{X}^2_t})=p_\t (\nabla^H, J^H)
\end{equation}
and the equality \eqref{ind-formula} follows directly from the McKean--Singer formula \eqref{McKt}.
\end{proof}

Again a special case is a
version of Atiyah's $L^2$-index theorem for families of twisted signature
operators.
\begin{cor}
Consider the situation of Example \ref{ex1.2}, when $\cF$ comes from two
finite dimensional flat vector bundles  $F^+\oplus F^-\to E$, i.e.~$\nabla^\cF
J^\cF=0$. Then  $D^{sign}=d^Z+d^{Z, *}$ is the twisted signature operator. Denoting by $\underline{D}^{sign}$ the signature operator twisted by $F^+\oplus F^-$, then in $H_{dR}^*(B)$ we have
$$
\ch_\t \Ker D^{sign}=\ch \Ker \underline{D}^{sign}\ .
$$

\end{cor}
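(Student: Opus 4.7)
The plan is to compare the two index formulas that Theorem~\ref{L2indT} and the classical Bismut family index theorem produce for the two sides.

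First, I apply Theorem~\ref{L2indT} directly to the duality bundle~$\cF=\tilde E\times_\Gamma(V\otimes\ell^2(\Gamma))$ of Example~\ref{ex1.2}. Under the assumption $\nabla^\cF J^\cF=0$, the involution $J^H$ on the bundle $H^\bullet_{L^2}(E/B;\cF)\cong\ker D^{sign}$ corresponds via Hodge theory to a grading into $\pm 1$-eigenspaces, so by Lemma~\ref{comput.p} the class $p_\tau(\nabla^H,J^H)$ coincides with the $\tau$-Chern character of the $\mathbb Z/2$-graded bundle $\ker D^{sign}=H^+\oplus H^-$, i.e.~$\ch_\tau\ker D^{sign}$. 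Hence
\begin{equation*}
\ch_\tau\Ker D^{sign}=\int_{E/B}L(E/B)\,p_\tau(\nabla^{\cF},J^\cF)\,.
\end{equation*}

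Next, I apply the classical Bismut family signature theorem to $\underline D^{sign}$ on $E\to B$ with coefficients in the finite-dimensional flat $\mathbb Z/2$-graded bundle $F=F^+\oplus F^-$. Because $\nabla^F J^F=0$, its fibrewise kernel $\ker\underline D^{sign}=H^\bullet(E/B;F^+)\oplus H^\bullet(E/B;F^-)$ is a smooth $\mathbb Z/2$-graded bundle (by finite-dimensional Hodge theory), and the usual small/large time argument gives
\begin{equation*}
\ch\Ker\underline D^{sign}=\int_{E/B}L(E/B)\,p(\nabla^F,J^F)\,.
\end{equation*}

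The heart of the argument is then to identify $p_\tau(\nabla^\cF,J^\cF)=p(\nabla^F,J^F)$ as forms on $E$. Write $\cF=F\otimes\mathcal L$ with $\mathcal L=\tilde E\times_\Gamma\ell^2(\Gamma)$; the standard $\ell^2$-inner product is $\Gamma$-invariant and therefore descends to a $\nabla^\mathcal L$-parallel admissible metric, so $\omega(\mathcal L,g^\mathcal L)=0$, while $\dim_\tau\mathcal L=\tau(\id_{\ell^2(\Gamma)})=1$. Choosing $g^\cF=g^F\otimes g^\mathcal L$ and $J^\cF=J^F\otimes\id_\mathcal L$ (admissible by Lemma~\ref{redstructure}), the curvature-like form satisfies $\omega^\cF=\omega^F\otimes\id_\mathcal L$, and multiplicativity of $\tr_\tau$ on tensor factors together with $\tr_\tau\id_{\ell^2(\Gamma)}=1$ gives $\tr_\tau(J^\cF\,F(\omega^\cF))=\tr(J^F\,F(\omega^F))$ for any power series $F$, yielding the desired identity at the level of forms.

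Combining the three displayed equations finishes the proof. The main obstacle is the third step: one must set up the tensor decomposition of $\cF$ carefully, verify that the chosen metric and involution on $\mathcal L$ are indeed admissible and $\nabla^\mathcal L$-parallel, and confirm that the multiplicativity of the $\tau$-trace under the tensor factorisation is compatible with the definition of $p_\tau$. Everything else is a direct application of the theorems already established.
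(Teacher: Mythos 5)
The paper states this corollary without a proof, so there is no explicit proof to compare against, but your argument is exactly the natural one the authors must have had in mind: apply Theorem~\ref{L2indT} to the $\cA$-module bundle $\cF$, apply the classical (i.e.\ $\Gamma$-trivial) version of the same theorem — which is Lott's signature index theorem for flat duality bundles \cite{Lo2}, Prop.~31, rather than the original Bismut families index theorem as you name it — to the finite-dimensional bundle $F=F^+\oplus F^-$, and then identify the two right-hand sides. Your identification $p_\tau(\nabla^\cF,J^\cF)=p(\nabla^F,J^F)$ is correct and the key computation: under $\cF=F\otimes\mathcal L$ with $\mathcal L=\tilde E\times_\Gamma\ell^2(\Gamma)$, the parallel $\ell^2$-metric gives $\omega^{\mathcal L}=0$, so $\omega^\cF=\omega^F\otimes\id_{\mathcal L}$, $J^\cF=J^F\otimes\id_{\mathcal L}$, and multiplicativity of $\tr_\tau$ together with $\tau(\id_{\ell^2(\Gamma)})=1$ finishes it. One phrasing quibble: the admissibility of $g^\cF=g^F\otimes g^{\mathcal L}$ and $J^\cF=J^F\otimes\id$ is not really an application of Lemma~\ref{redstructure}, which only asserts existence of some $J^\cF$; here you are constructing the natural $J^\cF$ directly and verifying $(J^\cF)^2=\epsilon$ and $g^\cF(x,y)=Q^\cF(x,J^\cF y)$ by hand, which is straightforward since $Q^\cF=Q^F\otimes\langle\cdot,\cdot\rangle_{\ell^2}$.
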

\subsection{Remarks}
\subsubsection{Index class versus index bundle}\label{Ib-Ic}

Consider the case of normal coverings of fibre bundles. From the point of view of non-commutative geometry, the family of twisted signature operators $D^{sign}$ possesses an analytic index class $ \aI D^{sign}\in K_0(C(B)\otimes C^*(\G))$. More generally, the index class belongs to the $K$-theory of a certain groupoid $\aI D^{sign}\in K_0(C_c^\infty(\mathcal G))$.
This class represents the obstruction to invertibility in $C_c^{-\infty}(\mathcal G)$ of the operator $ D^{sign}$ which is invertible modulo $C^\infty _c(\mathcal G)$ (\cite[II.9.$\alpha$]{Co}). 

In the classical case of a compact fibre family, the index class of the family of operators coincides with the $K$-theory class of the index bundle for any family of Dirac operators whose kernels form a bundle.  This is no longer true on non-compact fibres/leaves, where, basically, the obstruction to invertibility needs not be \lq\lq concentrated in the kernel bundle".

The question of the equality of the index class and the index bundle once one
has paired the Chern character with a trace, was first investigated by
Heitsch, Lazarov and Benameur in the more general situation of a foliated
manifold with Hausdorff graph. The results in \cite{HL, BH4} guarantee it is
true if the spectrum of $D$ is very well behaved (smoothness of the spectral
projection relative to $(0,\ep)$ plus a lower bound on the Novikov--Shubin
invariants). An example where the equality fails is given by Benameur, 
Heitsch and Wahl on a Lusztig fibration in \cite{BHW}.

Our Theorem \ref{L2indT} proves the desired equality for the signature operator with coefficients in a globally flat bundle, in the setting of families of normal coverings. 

\begin{cor} \label{pair}
Let $(\tilde E, \Gamma)\to B$ a family of normal coverings, and $M\to B$ be a family of flat finitely-generated Hilbertian $\Gamma$-$\mathcal A$-bimodules as in Definition \ref{def1.2}.
In this situation the pairing of the index bundle and of the index class with elements in $H_*(B)\otimes \tau$ are equal.
\end{cor}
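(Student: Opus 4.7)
The approach is to reduce the statement to Theorem~\ref{L2indT} by identifying the two pairings with the two sides of that $L^2$-index formula. The plan is to show that the $\tau$-Chern character of the index bundle and the $\tau$-Chern character of the index class represent the cohomology classes $p_\tau(\nabla^H,J^H)$ and $\int_{E/B}L(E/B)\,p_\tau(\nabla^\cF,J^\cF)$, respectively; once this is established, the equality of their pairings with any $c\in H_*(B)$ is immediate from Theorem~\ref{L2indT}.

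For the index bundle side, Proposition~\ref{SmoothProp} guarantees that the projection $P$ onto the fibrewise $L^2$-kernel of $d^Z+d^{Z,*}$ is smooth, so the index bundle is a smooth $\mathbb{Z}/2$-graded bundle of finitely generated Hilbertian $\cA$-modules carrying the flat duality structure $(\nabla^H,J^H)$ constructed in Section~\ref{Sec:tools}. Its $\tau$-Chern character is by construction the class $p_\tau(\nabla^H,J^H)$, represented in explicit form by $\trt(J^H P\,e^{R_0 P})$ via Lemma~\ref{comput.p}. By Theorem~\ref{HeatThm2}, this is exactly the large-time limit in $\tau$-norm of the closed form $\trt(J\,e^{-\mathbb{A}_t^2})$.

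For the index class side, I would invoke the local family $L^2$-index representative at finite positive $t$, namely $\trt(J\,e^{-\mathbb{A}_t^2})$: by McKean--Singer, formula~\eqref{McKt}, its de Rham cohomology class is independent of $t$, and by the standard local computation of Bismut (as used at the small-time end in the proof of Theorem~\ref{L2indT}) the $t\to 0$ limit equals $\int_{E/B}L(E/B)\,p_\tau(\nabla^\cF,J^\cF)$. In the $L^2$-setting of families of normal coverings, this superconnection heat kernel form represents the $\tau$-paired Connes--Chern character of the analytic index class $\aI D^{sign}\in K_0(C_c^\infty(\mathcal{G}))$ (equivalently, in $K_0(C(B)\otimes C^*(\Gamma))$ in the trivial-$\Gamma$-bundle case). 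Combining the two identifications with the equality supplied by Theorem~\ref{L2indT} gives the corollary.

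The main obstacle is the rigorous identification of the $\tau$-paired Chern character of $\aI D^{sign}$ with the heat-kernel form $\trt(J\,e^{-\mathbb{A}_t^2})$ at finite $t$. In the classical compact-fibre case this is the Bismut--Chern character formalism; in the $L^2$-covering context one must use the trace-class smoothness given by Proposition~\ref{SmoothProp} together with a construction of the index class by a superconnection representative paired with $\tau$ (along the lines of Lott's higher index theory for coverings and its extensions in the setting of Definition~\ref{def1.2}). Once this identification is in place, the corollary is an immediate consequence of Theorem~\ref{L2indT}.
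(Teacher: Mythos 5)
Your proposal is correct and follows the same route the paper implicitly takes: the paper gives no separate argument for Corollary~\ref{pair}, presenting it as an immediate consequence of Theorem~\ref{L2indT}. You correctly identify the two halves of the reduction: $p_\tau(\nabla^H,J^H)$ is by definition the $\tau$-Chern character of the index bundle (per the remark preceding Lemma~\ref{comput.p}), and the cohomological side $\int_{E/B}L(E/B)\,p_\tau(\nabla^\cF,J^\cF)$ is the $\tau$-paired Chern character of the analytic index class by the $L^2$-family index theory that the paper invokes as background (Lott, Benameur--Heitsch). The one ``obstacle'' you flag---identifying the $\tau$-paired Connes--Chern character of $\aI D^{sign}$ with $\trt(J\,e^{-\mathbb{A}_t^2})$ at finite $t$---is genuine but is exactly the piece the authors treat as established in the literature rather than reprove; you are not missing anything the paper supplies.
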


\subsubsection{Lusztig fibrewise flat twisting bundle}\label{Lusztig}

Our methods do not extend to the fibrewise flat case because the operator $d^E$ is no longer a flat superconnection and the property $(d^{E,*}+d^E)^2=-(d^{E,*}-d^E)^2$ is no longer true. 
This is consistent with \cite{BHW}.

\subsubsection{Examples of spectral density}

One can construct an example of a badly behaved spectral density function, starting from the Lusztig fibration.
 
Consider $\pi_2\colon T\times T^*\rightarrow T^*$ where $T=S^1=[0,2\pi]/\sim$, and $T^*$ is the dual torus which we parametrize as $T^*=\{\theta_s\colon\mathbb Z\rightarrow U(1), n\mapsto e^{2\pi i n s}\}$, $s\in [0,1]/\sim$.
The line bundle $l\colon L\rightarrow T\times T^*$ defined as $L=(\mathbb R\times \mathbb R^*\times \mathbb C)/(\mathbb Z\times \mathbb Z^*)$ with the action $(n,m)\cdot (t,r,\lambda)=(2\pi n+t, m+r, e^{2\pi i\langle r,n\rangle}\la)$ is leafwise flat, because $L_\theta=L_{|\pi^{-1}(\theta)}$ is flat. 
Let $(D_{s})_{s\in T^*}$ be the family of signature operators twisted by $L_{\theta_s}$: it is explicitly given by $D_s=i\partial_t$ on $\mathcal C^\infty(S^1,L_{\theta_s})=\{g:[0,2\pi]\rightarrow \mathbb C\,\,\,| \,\; e^{2\pi is}g(0)=g(2\pi)\}$.
We have $\spec D_s=\{(k+s), k\in \mathbb Z\}$, so that  the spectral density function of $D_s$ is equal to $F_s(\la)=\tr E_{\la^2}^{\Delta_s}= \lfloor \la+1-s \rfloor+\lfloor\la+s\rfloor$. 
Let $X$ be a closed manifold, whose universal covering $\tilde X$ is such that the Laplacian on $\tilde X$ has a nontrivial kernel, and let $\pi_3\colon T\times X\times T^*\to T^*$ be the fibration having as fibre the product manifold $T\times X$.

Consider now the family of normal $\pi_1(X)$-covering $q_3\colon T\times \tilde X\times T^*\to T\times X\times T^*$. Lift the twisting bundle $l$ to the product $T\times X\times T^*$ and to the covering. Computing the spectral density function of the Dirac operator on the product  (using the convolution of the densities) one can see that it has a discontinuity in  
 $s=0$.

%%%%%   section      E T A
\section{Refined index theorems and secondary invariants}
\label{refined}
 In this section we prove the refinements of Theorems \ref{L2indT} and
 \ref{L2BL} at the level of differential forms, and we define the $L^2$-eta
 form and the $L^2$-higher analytic torsion. To this aim, we look for the
 weakest regularity condition under which we can pass to the large time limit
 in the transgression formulas derived from  \eqref{trF^} and
 \eqref{McKt}. 
Making  use of the estimates of Section \ref{heat-kernel}, we show that the secondary invariants eta and torsion are well defined in the following two cases: if the typical fibre has positive Novikov--Shubin invariants, or if it is of determinant class and $L^2$-acyclic.

The $L^2$-torsion form was first introduced by Gong and Rothenberg \cite{GR},
assuming  much stronger regularity hypothesis (smoothness of the spectral
projection $\chi_{(0,\ep)}(D)$ and positive Novikov--Shubin invariants). Our
extension to certain families of determinant class is relevant, because it was
recently proved by Grabowski that there exist closed manifolds
with Novikov--Shubin invariant equal to zero \cite{Gra}, but these examples are
of determinant class by \cite{Sc2}.

\subsection{\texorpdfstring{$L^2$}{L2}-torsion forms}
Consider the $L^2$-Betti numbers with coefficients in $\cF$ defined by $b_\tau^{(k)}(Z, \cF)=\dim_\t\left( \ker(d^Z+d^{Z,*})\cap \cW^k \right)$, and define the \emph{$L^2$-Euler characteristic}, and the \emph{derived $L^2$-Euler characteristic}, respectively as
$$\chi_\t(E/B):=\sum_k (-1)^kb_\tau^{(k)}(Z, \cF)\;\;,\;\;\;\;\; \;\chi_\t'(E/B):=\sum_k (-1)^k k b_\tau^{(k)}(Z, \cF)$$   
\begin{lem}
\label{limFhat}
$$
\lim_{t\to\infty} F_\tau^{\wedge}(t)=\frac{\chi'_\t (E/B)}{2} \ .
$$
\end{lem}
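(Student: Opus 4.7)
The plan is to use Theorem~\ref{HeatThm2} directly. Writing $(1+2\mathbb X_t^2)e^{\mathbb X_t^2}=e^{\mathbb X_t^2}+2\mathbb X_t^2e^{\mathbb X_t^2}$, the cases $k=0$ and $k=2$ of Theorem~\ref{HeatThm2} give $\tau$-norm convergence to $Pe^{(R_0P)^2}$ and $P(R_0P)^2e^{(R_0P)^2}$ respectively. Passing the limit inside $\Str_\tau$ (which is continuous with respect to the $\tau$-norm), I will obtain
\begin{equation*}
  \lim_{t\to\infty}F_\tau^\wedge(t)
  =\Phi\,\Str_\tau\!\left(\tfrac{N}{2}\,Pe^{(R_0P)^2}+NP(R_0P)^2\,e^{(R_0P)^2}\right).
\end{equation*}

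The next step is to interpret this as a quantity on the cohomology bundle. Since $(PR_0P)^m=P(R_0P)^m$, setting $\bar R:=PR_0P=\tfrac12(\nabla^{H,*}-\nabla^H)$ on $H^\bullet_{L^2}(E/B;\cF)$, the limit reduces to $\Phi\Str_\tau\bigl(\tfrac{N}{2}(1+2\bar R^2)e^{\bar R^2}\bigr)$, evaluated on $H$. I will then observe that $[N,R_0]=0$ (both $\nabla^{\cW}$, being a Lie derivative along horizontal lifts, and its adjoint $\nabla^{\cW,*}$ preserve the vertical form degree because $g^{\cW}$ is diagonal in the degree) and $[N,P]=0$ (since $\ker(d^Z+d^{Z,*})$ is $\mathbb Z$-graded). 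Hence $N$ commutes with $\bar R$, and in particular with every even power of $\bar R$.

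Expanding the exponential, the form of degree $2m$ on $B$ is, up to a nonzero numerical coefficient and the factor $(2\pi i)^{-m}$ from $\Phi$, equal to $\Str_\tau(N\bar R^{2m})$. For $m=0$ this is $\tfrac12\Str_\tau(NP)=\tfrac12\sum_k(-1)^k k\,b_\tau^{(k)}(Z;\cF)$, which by definition is $\chi'_\tau(E/B)/2$.

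The main step is to show that each form-degree-$2m$ contribution with $m\geq1$ vanishes identically. The idea is a graded cyclicity argument: apply the identity $\tr_\tau(\alpha\beta)=(-1)^{|\alpha|_{\mathrm{form}}|\beta|_{\mathrm{form}}}\tr_\tau(\beta\alpha)$ with $\alpha=(-1)^N N\,\bar R$ (form degree $1$) and $\beta=\bar R^{2m-1}$ (form degree $2m-1$). Because $(-1)^N N$ commutes with $\bar R$, both orderings yield the same operator $(-1)^N N\,\bar R^{2m}$, while the graded sign is $(-1)^{2m-1}=-1$. Therefore $\Str_\tau(N\bar R^{2m})=-\Str_\tau(N\bar R^{2m})=0$, leaving only the constant $\chi'_\tau(E/B)/2$. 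The bookkeeping of the cyclicity signs and of the commutations of $N$, $P$ and $R_0$ is the only delicate point, but it is algebraic and does not involve further analysis.
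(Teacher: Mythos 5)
Your proof is correct and follows the same route as the paper: apply Theorem~\ref{HeatThm2} for $k=0,2$, then show $\Str_\tau\bigl(NP(R_0P)^{2m}\bigr)=0$ for $m\ge1$, which the paper asserts without justification but which your graded-cyclicity argument supplies (and the form-degree-only sign you use is legitimate here precisely because both $(-1)^NN$ and $R_0$ have even $\End\cW$-parity, so the total degree equals the form degree).
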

\begin{proof}
It is enough to apply Theorem \ref{HeatThm2}.  Because $\Str_\t\left(NP(R_0P)^{2j}\right)=0 \;\;\forall j\neq 0$, it follows that
$\lim_{t\to\infty} \Str_\t \left(\frac{N}{2}(1+2\mathbb X_t^2) e^{\mathbb X_t^2}\right)=\Str_\t \left(\frac{N}{2}\right)$. 
\end{proof}

\begin{lem}[Theorem 3.20 in \cite{BL}]
As $t\to 0$, 
$$
F^{\wedge}_\tau(t)=\left\{\begin{array}{cc}\frac{1}{4}\dim Z \rk_\tau(\mathcal
    F)\chi_\t(E/B) , & \dim Z \text{ even} \\
\mathcal O(\sqrt t) , &\dim Z \text{ odd}\ .
\end{array}\right.
$$
\end{lem}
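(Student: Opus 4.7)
The plan is to follow the strategy of \cite[Theorem 3.20]{BL}, adapted to the $L^2$-setting. Using the identity $\mathbb{X}_t^2 = -\mathbb{A}_t^2$ in the de Rham graded algebra (cf.~\eqref{AXrel}), we rewrite
$$F^\wedge_\tau(t) = \Phi\,\Str_\tau\Bigl(\tfrac{N}{2}(1-2\mathbb{A}_t^2)\,e^{-\mathbb{A}_t^2}\Bigr),$$
so that the analysis is recast in terms of the Bismut--Lott superconnection $\mathbb{A}_t$, whose curvature $\mathbb{A}_t^2$ is a generalized Laplacian along the fibres with coefficients in $\Omega^\bullet(B)$.

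First, I would reduce the $\tau$-supertrace to a local computation. On a normal covering, the $\tau$-trace of a fibrewise smoothing operator is given by integrating the pointwise diagonal of its Schwartz kernel over a fundamental domain in $\widetilde E_b$. Standard Gaussian off-diagonal estimates on the heat kernel of $\mathbb{A}_t^2$, as used already by Gong--Rothenberg in \cite{GR}, imply that the off-diagonal contributions are exponentially small in $1/t$. Consequently, the small-time asymptotics of $\Str_\tau(\cdot\,e^{-\mathbb{A}_t^2})$ are computed pointwise by the same Mehler-type local expression as in the classical Bismut--Lott case.

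Next, I would perform Getzler rescaling of Clifford variables at a point of $Z_b$, exactly as in \cite[Sec.~3]{BL}. For $\dim Z$ odd, the rescaled symbol is a polynomial of odd Clifford parity, its supertrace vanishes, and so $F^\wedge_\tau(t) = \mathcal{O}(\sqrt t)$. For $\dim Z$ even, I would use the identity $N = \tfrac{1}{2}\bigl(n + \sum_{i=1}^n c^i\hat c^i\bigr)$ recorded at the start of Section \ref{Sec:tools}: in the supertrace only the top Clifford degree $\hat c^1 \cdots \hat c^n$ survives, which comes from the scalar part $n/2$ of $N$, while the $c^i\hat c^i$ pieces do not reach top degree and drop out. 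Hence at leading order
$$\lim_{t\to 0}\Phi\,\Str_\tau\Bigl(\tfrac{N}{2}(1-2\mathbb{A}_t^2)e^{-\mathbb{A}_t^2}\Bigr) = \tfrac{n}{4}\lim_{t\to 0}\Phi\,\Str_\tau\bigl((1-2\mathbb{A}_t^2)e^{-\mathbb{A}_t^2}\bigr),$$
and the remaining limit is precisely the one computed by Bismut--Lott: the Mehler-formula calculation collapses the differential-form part and leaves $\rk_\tau(\mathcal{F})\cdot \chi_\tau(E/B)$, the only modification being that the finite-dimensional rank and fibre Euler characteristic are replaced by their $\tau$-versions, since these reflect the $\tau$-dimension of the kernel of the fibrewise Hodge Laplacian.

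The main obstacle is the first step: verifying that the $\tau$-supertrace admits a local, pointwise small-time expansion along each covering fibre, with Gaussian remainders uniform as $t\to 0$. This is essentially contained in \cite{GR} and in the off-diagonal heat kernel bounds used earlier in Section \ref{heat-kernel}; once this is granted, the remaining argument is a verbatim adaptation of \cite[Thm.~3.20]{BL}, with the ordinary trace replaced by $\tau$ throughout.
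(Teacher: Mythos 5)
The paper gives no proof of this lemma at all: it is stated as a direct citation to \cite[Theorem~3.20]{BL}, and the $L^2$-adaptation is left implicit (the same device is used throughout the paper, e.g.\ in the proof of Theorem~\ref{L2BL} where the small-time limit is ``obtained as in \cite[Theorem 3.16]{BL}''). Your sketch fills in exactly what that implicit step amounts to, and the approach is the correct and standard one. The rewriting $F^\wedge_\tau(t)=\Phi\,\Str_\tau\bigl(\tfrac{N}{2}(1-2\mathbb{A}_t^2)e^{-\mathbb{A}_t^2}\bigr)$ via $\mathbb X_t^2=-\mathbb A_t^2$ puts the expression in the exact Bismut--Lott form, and the two-step scheme you describe --- (i) localize the $\tau$-supertrace to a fundamental domain using Gaussian off-diagonal decay of the fibrewise heat kernel, then (ii) apply Getzler rescaling/Mehler formula pointwise as in \cite{BL}, replacing ordinary trace by $\tau$ --- is Atiyah's classical argument and is precisely what makes the small-time asymptotics transfer from the compact to the $L^2$-setting. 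Your explanation of the odd-dimensional case (vanishing by Clifford parity after rescaling) is also correct.

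Two small cautions, neither of which affects the overall validity of the approach. First, the claim that only the scalar part $n/2$ of $N=\tfrac12(n+\sum_i c^i\hat c^i)$ survives in the even-dimensional supertrace is the right conclusion, but as stated it is a heuristic: in Bismut--Lott's argument the dismissal of the $\sum c^i\hat c^i$ contribution follows from a $\hat c$-parity count after the rescaling (the Mehler expansion produces $\hat c$'s in pairs, so an odd number cannot reach top degree), and it is worth saying this rather than asserting ``does not reach top degree'' without qualification. Second, you copy the paper's constant $\tfrac14\,n\,\rk_\tau(\mathcal F)\,\chi_\tau(E/B)$, but note that with the paper's definition $\chi_\tau(E/B)=\sum(-1)^k b^{(k)}_\tau(Z,\cF)$ (twisted by $\cF$), Atiyah's $L^2$-index theorem gives $\chi_\tau(E/B)=\rk_\tau(\cF)\chi(Z)$, so the translation of Bismut--Lott's $\tfrac{n}{4}\rk(F)\chi(Z)$ should read $\tfrac{n}{4}\rk_\tau(\cF)\chi(Z)=\tfrac{n}{4}\chi_\tau(E/B)$ rather than $\tfrac{n}{4}\rk_\tau(\cF)\chi_\tau(E/B)$; this is a (harmless, consistent) slip in the paper's statement and in formula~\eqref{TI} that you have inherited, not an error introduced by your argument.
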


The integral of $\frac{ F^{\wedge}_\t(t)}{t}$ would diverge both for $t\to 0$ and $t\to\infty$, so we add the usual compensation scalar terms as in \cite[Def 3.22]{BL} and define the function
\begin{equation}
\label{TI}
\mathcal{T}_\t(T^HE, g^{TZ}, g^{\mathcal F})(t):= 
\frac{1}{t} \left[F_\t^{\wedge}(t)-\frac{\chi'_\tau}{2}-\left(\frac{n \rk_\tau \mathcal F\cdot \chi_\tau}{4} -\frac{\chi_\tau'}{2}\right)(1-2t)e^{-t}\right] \ .
\end{equation}

\begin{dfn} \label{defdetcl}
Denote as before $D=\frac{1}{2}(d^{Z,*}-d^Z)$, and recall that $P$ is the projection onto $\Ker D$.
The fibre $Z$ is called of \emph{determinant class} if 
\begin{equation}
 \int_1^\infty\trt (e^{tD^2}-P)\frac{dt}{t}<\infty\ .
 \end{equation}
\end{dfn}
The first statement of the following proposition was proved by Gong and Rothenberg in \cite{GR} under the additional hypothesis that the  spectral projections $P_\ep$ are smooth. 

\begin{prop} 
\label{Htor} 
\begin{enumerate}
\item[(I)] %If there exists $a>0$ such that $\;\theta(t)=\mathcal O(t^{-a})$ as $t\to \infty$, 
If the Novikov--Shubin invariants are positive, then there exists $\ep>0$ such that in $\Omega^\bullet(B; \End^1_{\cA}\Omega_{L^2}(E/B))$, \emph{i.e.} in the trace norm
$$
F_\t^{\wedge}(t)-\frac{\chi'_\tau}{2}= \mathcal O(t^{-\ep}) \;\;, \;\; \text{as } t\to \infty\ .
$$
\item[(II)] If $Z$ is of determinant class and $L^2$-acyclic, then 
$$
\ds \int_1^\infty \frac{1}{t} F_\t^{\wedge}(t)dt< \infty\ .
$$
\end{enumerate}
\end{prop}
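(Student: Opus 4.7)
The plan is to revisit the Duhamel expansion of Section \ref{heat-kernel} and extract not just the limit as $t\to\infty$ but a quantitative rate of convergence, then optimise the cutoff $\bar s(t)$ according to the hypotheses. First, I would write
\begin{equation*}
  F_\tau^{\wedge}(t)-\tfrac12\chi'_\tau(E/B)
  =\Phi\,\Str_\tau\Bigl(\tfrac N2\,(e^{\mathbb X_t^2}-Pe^{(R_0P)^2})\Bigr)
  +\Phi\,\Str_\tau\Bigl(N\,(\mathbb X_t^2e^{\mathbb X_t^2}-PR_0Pe^{(R_0P)^2}R_0P)\Bigr),
\end{equation*}
using Lemma \ref{limFhat} and the three limits of Theorem \ref{HeatThm2} for $k=0,2$. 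Each of the two differences on the right is a finite sum (of at most order $m_B$ terms, by the degree count \eqref{CnBound}) of expressions of the form $K(t,n,I,J;c_0,\dots,c_n;a_1,\dots,a_n)$ minus their limits, after the integration by parts of Section \ref{int.part} has eliminated every factor $(\sqrt tD)^2e^{s_itD^2}$ with $i\in I$.

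For part (I), under the Novikov--Shubin positivity hypothesis $\theta(t)=\mathcal O(t^{-\alpha})$ I would invoke Lemma \ref{barst}(2) to fix $\bar s(t)=t^{\beta-1}$ with $\beta<1$ chosen so that both
\begin{equation*}
  \bar s(t)^{-m_B/2}\,\theta(\bar s(t)t/2)\le t^{-\ep_1}
  \quad\text{and}\quad
  \|P\|_\tau\,\bar s(t)^{1/2}\le \|P\|_\tau\, t^{-\ep_2}
\end{equation*}
for some $\ep_1,\ep_2>0$. The estimate \eqref{esti} of Proposition \ref{OtherTermsProp} (together with the analogous estimate in Proposition \ref{EasyTermsProp}) then gives, for each individual $K$-term, the desired rate $\mathcal O(t^{-\ep})$ with $\ep=\min(\ep_1,\ep_2)$; summing over the finitely many contributing terms yields the claimed bound.

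For part (II), the $L^2$-acyclicity hypothesis gives $P=0$, hence $\chi'_\tau=0$ and the $\|P\|_\tau\bar s(t)^{1/2}$ contribution in \eqref{esti} disappears identically, leaving only the term $\bar s(t)^{-m_B/2}\theta(\bar s(t)t/2)$. I would then choose $\bar s(t)$ according to Lemma \ref{barst}(3) with $d=m_B/2$, which is available precisely because $Z$ is of determinant class, and obtain
\begin{equation*}
  \int_1^\infty\frac{1}{t}\bigl|F_\tau^{\wedge}(t)\bigr|_\tau\,dt
  \le C\sum_{\text{terms}}\int_1^\infty \bar s(t)^{-m_B/2}\,\theta\bigl(\bar s(t)t/2\bigr)\,\frac{dt}{t}<\infty.
\end{equation*}

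The main obstacle is that the $\mathbb X_t^2 e^{\mathbb X_t^2}$ piece corresponds to the $k=2$ case of Theorem \ref{HeatThm2}, whose treatment required the partial-integration trick of Section \ref{int.part} to absorb the factors $tD^2e^{s_itD^2}$ appearing when $c_i=2$ and $i\in I$. I would have to check that after this partial integration every surviving summand still satisfies the same quantitative estimate \eqref{esti}—this amounts to verifying that each of the three or four replacement terms produced by \eqref{PartIntResult} is again of the form $K(t,n,I,J;\dots)$ with $c_i\in\{0,1\}$ for $i\in I$, so that Proposition \ref{OtherTermsProp} applies verbatim. Once this bookkeeping is in place, the quantitative version of Theorem \ref{HeatThm2} is immediate and both statements follow.
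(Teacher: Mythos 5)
Your proposal is correct and matches the paper's proof in essence: both go back to the Duhamel expansion of Section~\ref{heat-kernel}, use Lemma~\ref{limFhat} to reduce the problem to the quantitative remainders bounded in~\eqref{esti}, and then optimise $\bar s(t)$ via Lemma~\ref{barst}(2) in part~(I) and Lemma~\ref{barst}(3) together with $P=0$ in part~(II). Your extra care in writing out the $k=0,2$ decomposition of $F^\wedge_\tau(t)-\tfrac12\chi'_\tau$ and in flagging the bookkeeping after the partial integration of Section~\ref{int.part} is precisely the detail the paper compresses into \lq\lq we proceed as in the proof of Propositions~\ref{EasyTermsProp} and~\ref{OtherTermsProp}''.
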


\begin{proof}
We go back to the proof of Theorem \ref{HeatThm2}. 
Consider the expression for $F_\t^{\wedge}(t)$ developed with the Duhamel expansion as in Section \ref{Proof1Sec}. By Lemma \ref{limFhat}, it is enough to estimate all the terms in the expansion that go to zero as $t\to\infty$. 
 
If the hypothesis of (I) is verified, with the choices of Lemma \ref{barst} (2), there exists $\ep>0$ such that $\theta(t\bar s(t))\cdot \left(\frac{1}{\bar s(t)}\right)^{m_B}\leq t^{-\ep}$, as $t\to\infty$. Therefore the remainder terms in \eqref{rest1} and \eqref{rest3}, summarized in \eqref{esti}, become an $\mathcal O(t^{-\ep})$.

If we have the hypothesis of part (II), then we choose $\bar s(t)$ as in Lemma \ref{barst} (3) and we prove that all remainder terms are integrable on $[1,\infty)$.
To do so, we proceed as in the proof of Proposition \ref{EasyTermsProp} and
\ref{OtherTermsProp}. In particular, we successively replace $e^{\bar s t
  D^2}=(e^{\bar s t D^2}-P)+P$  and apply the determinant class condition to
$e^{\bar s t D^2}-P$. Since we are assuming $L^2$-acyclicity,  $P=0$ hence the
remainder terms with $P$ are not there, and the convergence holds.
\end{proof}

\begin{cor}
\label{torcor}
If any of the two hypothesis in (I) or (II) of Proposition \ref{Htor} is verified, then the $L^2$-torsion form is well defined as
\begin{equation}
\label{torform}
\mathcal T_\tau(T^HE, g^{TZ},g^\cF)=-\int_0^\infty  \left[F_\t^{\wedge}(t)-\frac{\chi'_\tau}{2}-\left(\frac{n\rk_\tau \mathcal F\cdot \chi_\tau}{4} -\frac{\chi_\tau'}{2}\right)(1-2t)e^{-t}\right]\frac{1}{t}dt \ .
\end{equation}
\end{cor}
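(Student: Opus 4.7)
The plan is to verify that the integrand in \eqref{torform} is integrable on $(0,\infty)$ by separately analyzing the two ends of the integration interval, using the small-time asymptotics stated as Theorem 3.20 of \cite{BL} above and the large-time asymptotics of Proposition~\ref{Htor}.

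For the small-time end, I would observe that the scalar compensation term $\bigl(\tfrac{n\rk_\tau\mathcal F\cdot\chi_\tau}{4}-\tfrac{\chi_\tau'}{2}\bigr)(1-2t)e^{-t}$ is engineered precisely to cancel the leading constant in $F_\tau^\wedge(t)-\tfrac{\chi_\tau'}{2}$ at $t=0$. Indeed, the cited small-$t$ asymptotic gives $F_\tau^\wedge(t)=\tfrac{n\rk_\tau\mathcal F\cdot\chi_\tau}{4}+O(\sqrt t)$ when $\dim Z$ is even (and $O(\sqrt t)$ when $\dim Z$ is odd), while $(1-2t)e^{-t}=1+O(t)$. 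Substituting and subtracting shows that the bracketed expression is $O(\sqrt t)$ as $t\to 0$, so after dividing by $t$ the integrand is $O(t^{-1/2})$ and hence integrable on $(0,1]$.

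For the large-time end, the compensation term $(1-2t)e^{-t}$ itself is integrable against $dt/t$, so the question reduces to integrability of $\bigl(F_\tau^\wedge(t)-\tfrac{\chi_\tau'}{2}\bigr)/t$ on $[1,\infty)$. Under hypothesis (I), Proposition~\ref{Htor}(I) asserts that this quantity is $O(t^{-\varepsilon})$ in the trace norm for some $\varepsilon>0$, so the integrand is $O(t^{-1-\varepsilon})$ and integrable. Under hypothesis (II), $L^2$-acyclicity forces $b_\tau^{(k)}=0$ for all $k$, whence $\chi_\tau=\chi_\tau'=0$; the bracketed expression reduces to $F_\tau^\wedge(t)$ (modulo the exponentially decaying compensation term), and Proposition~\ref{Htor}(II) yields exactly the needed integrability $\int_1^\infty \tfrac{1}{t}F_\tau^\wedge(t)\,dt<\infty$.

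No serious obstacle remains, since the hard estimates on the Duhamel remainders — the careful choice of $\bar s(t)$ from Lemma~\ref{barst} parts (2) and (3), and the resulting decay of $F_\tau^\wedge(t)-\tfrac{\chi_\tau'}{2}$ — were already absorbed into Proposition~\ref{Htor}. Thus the corollary is essentially a bookkeeping verification that the two hypotheses propagate through the compensated integrand \eqref{TI}, confirming that the improper integral \eqref{torform} converges in $\Omega^\bullet(B;\mathrm{End}_{\mathcal A}^1\Omega^\bullet_{L^2}(E/B;\mathcal F))$ and so defines $\mathcal T_\tau(T^HE,g^{TZ},g^{\mathcal F})$.
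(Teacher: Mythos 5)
Your proposal is correct and matches the paper's intended (unstated) reasoning: the paper gives no explicit proof of Corollary~\ref{torcor} but clearly means it to follow from Proposition~\ref{Htor} for the large-time end, together with the small-time asymptotics of the preceding lemma (Theorem 3.20 of \cite{BL}), which the compensation term $\bigl(\tfrac{n\rk_\tau\mathcal F\cdot\chi_\tau}{4}-\tfrac{\chi_\tau'}{2}\bigr)(1-2t)e^{-t}$ is designed to absorb. Your observations that the constant cancels at $t=0$, that $(1-2t)e^{-t}$ is integrable against $dt/t$ on $[1,\infty)$, and that under (II) the $L^2$-acyclicity forces $\chi_\tau=\chi_\tau'=0$ so that Proposition~\ref{Htor}(II) applies directly, are all exactly what is needed.
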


\medskip

 \begin{rem} 
\emph{$L^2$-torsion and Igusa's axioms: a question}.
The higher analytic torsion of Bismut and Lott has a counterpart in (differential) topology, the \emph{higher Franz--Reidemeister torsion}  $\mathcal T_{IK}$ defined by Igusa and Klein \cite{Ig}.

Igusa gave a set of axioms characterizing $\mathcal T_{IK}$ in the case of a smooth unipotent fibre bundle $p\colon E\rightarrow B$, and without coefficients \cite{I}.
It is an open question how to axiomatize higher torsion when the input data also contains a flat twisting bundle, \emph{i.e.} a representation $\phi\colon\pi_1(E)\rightarrow U(n)$.
As explained in \cite{I1}, the set of desired axioms should contain an additional \lq\lq continuity condition" with respect to the representation $\phi$. 

We think that the axiom could be a continuity condition on the sequence of
higher analytic torsions for a tower of coverings, so involving the
$L^2$-torsion defined in \ref{torform}. More precisely, we ask whether, given a
residually finite covering of a fibre bundle (possibly under opportune
assumptions), the sequence of Bismut--Lott torsions for the finite covering
families converges to the $L^2$-higher torsion.
Such a property, if true, could provide an interesting basis for future investigation of $L^2$-topological higher torsion invariants.
\end{rem}

\subsection{\texorpdfstring{$L^2$}{L2}-eta forms for the signature operator}

Let $Z\to E\stackrel{p}{\to}B$ be a smooth fibre bundle with connected
$2l$-dimensional closed oriented Riemannian fibres, let $\mathcal F\to E$ be a flat bundle of $\mathcal A$-modules as in \eqref{coeff-bundle} or \eqref{coeff-bundle2}.

If $\dim Z=2l$ consider in $\O(B,\cW)^{dua}$, recall \eqref{McKt},
 the eta function
\begin{equation}\label{etaevenint}
\eta_\t(t):= (2\pi i)^{-\frac{1}{2}}\Phi \tr_\t \left(J \frac{d\mathbb{A}_t}{dt}e^{-\mathbb A^2_t}\right)\ .
\end{equation}

If $\dim Z:=2l+1$, we consider the \emph{odd signature operator}  of \ref{tra-odd}
and we set $$\eta_\tau(t):=(2\pi i)^{-\frac{1}{2}}\Phi \trt \left(\frac{d\mathbb{A}_t}{dt}e^{-\mathbb{A}_t^2}\right)^{even}\ .$$
\begin{lem}
In both even and odd dimensional cases
\begin{equation}
\lim_{t\to\infty}\eta_\tau(t)=0\ . 
\end{equation}
\end{lem}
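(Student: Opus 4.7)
The plan is to differentiate the rescaled formula for $\mathbb A_t$ and show that each resulting piece, paired with $e^{-\mathbb A_t^2}=e^{\mathbb X_t^2}$ and the (super)trace, tends to zero. Starting from $\mathbb A_t = \tfrac{\sqrt t}{2}(d^Z+d^{Z,*}) + \nabla^u - \tfrac{c(T)}{4\sqrt t}$ one computes
\[
\frac{d\mathbb A_t}{dt} \;=\; \frac{1}{2\sqrt t}\,\tilde D \;+\; \frac{c(T)}{8\,t^{3/2}}\,,\qquad \tilde D := \tfrac12(d^Z+d^{Z,*}).
\]
The second summand is uniformly bounded in operator norm, and multiplying by $e^{\mathbb X_t^2}$, whose $\tau$-norm is bounded for large $t$ by Theorem~\ref{HeatThm2}, yields a contribution to $\eta_\tau(t)$ of order $t^{-3/2}$ in both the even case (with the factor $J$) and the odd case (after taking the even part).

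The heart of the argument is thus to prove that $\tilde D\,e^{\mathbb X_t^2}\to 0$ in $\tau$-norm, so that the main summand $\tfrac{1}{2\sqrt t}\tr_\tau\!\bigl(J\,\tilde D\,e^{\mathbb X_t^2}\bigr)$ vanishes. The crucial algebraic observation, flowing from $(d^Z)^2=(d^{Z,*})^2=0$, is
\[
\tilde D^2 \;=\; \tfrac14\bigl(d^Z d^{Z,*} + d^{Z,*} d^Z\bigr) \;=\; -D^2,
\]
so that $\ker \tilde D = \ker D = \ker(d^Z+d^{Z,*})$ and hence $\tilde D P = P\tilde D = 0$, while by the spectral calculus $\tilde D$ enjoys the same heat-semigroup bounds as $D$: $\|\tilde D\,e^{stD^2}\|_{\mathrm{op}} \le C(st)^{-1/2}$ and $\|\tilde D\,e^{stD^2}\|_\tau \le C s^{-1/2}\theta(st/2)$.

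With these facts in hand I will re-run the Duhamel analysis of Section~\ref{heat-kernel} essentially verbatim: expand $\tilde D\,e^{\mathbb X_t^2}$ as a sum of terms $\tilde D\cdot K(t,n,I;\dots)$, split each simplex $\Delta^n$ as in Section~\ref{spl-Duham}, and perform the integration by parts of Section~\ref{int.part} to eliminate every $c_i=2$ at positions $i\in I$. The analogues of Propositions~\ref{EasyTermsProp} and~\ref{OtherTermsProp} then give, in the limit $t\to\infty$, the replacement $e^{s_i tD^2}\leadsto P$ in each surviving summand; but since every such summand carries $\tilde D$ as its leftmost factor and $\tilde D P = 0$, every surviving contribution is zero. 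The boundedness of $J$ does not interfere with any estimate, so the same reasoning covers both the even-dimensional duality case and the odd-dimensional case.

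The main obstacle is purely organizational: re-running the bookkeeping of Section~\ref{heat-kernel} with $\tilde D$ inserted on the left. This goes through essentially for free because $\tilde D^2 = -D^2$ makes $\tilde D$ algebraically interchangeable with $D$ for the purposes of the heat-kernel estimates, and the one structurally new input, $\tilde D P = 0$, is precisely what triggers the vanishing in the limit.
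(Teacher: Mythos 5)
Your proof is correct, but it takes a genuinely different route from the paper. The paper recognizes $\eta_\tau(t)$ (via Remark~\ref{dterm.rem}) as the $d\vartheta$-coefficient of $\tr_\tau\bigl(J e^{\breve{\mathbb X}^2}\bigr)$ on the enlarged base $B\times\mathbb R^*_+$, applies Theorem~\ref{HeatThm2} directly with $R_0$ replaced by $\breve R_0 = R_0 + (N-\tfrac n2)\,d\vartheta$ to obtain the large-time limit in closed form, and then extracts the $d\vartheta$-term $\sum_k\tfrac1{k!}\Str\bigl((\hat\omega_\C\otimes J^\cF)P(R_0P)^{2k+1}(N-\tfrac n2)P\bigr)$ and shows it vanishes by trace cyclicity together with the anti-commutation of $R_0P$ with $\hat\omega_\C\otimes J^\cF$. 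You instead differentiate $\mathbb A_t$ in $t$, isolate the dominant piece $\tfrac{1}{2\sqrt t}\,\tilde D\,e^{\mathbb X_t^2}$ with $\tilde D=\tfrac12(d^Z+d^{Z,*})$, and propose to rerun the Duhamel machinery of Section~\ref{heat-kernel} with $\tilde D$ inserted on the left, exploiting $\tilde D^2=-D^2$ and $\tilde D P=0$. This does work: since $\tilde D$ commutes with $D^2$ (and hence with each $e^{s_itD^2}$) and obeys the same spectral bounds as $|D|$, grouping it with the leftmost heat factor contributes an \emph{extra} factor $\bigl(s_0t\bigr)^{-1/2}$ to every estimate in Propositions~\ref{EasyTermsProp} and~\ref{OtherTermsProp}, so all vanishing terms vanish at least as fast, while the surviving letters converge to $\tilde D\,P(\cdots)=0$. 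The trade-off: the paper's route is more economical, re-using Theorem~\ref{HeatThm2} verbatim and reducing the problem to a purely algebraic supertrace identity, whereas your route requires re-verifying the entire estimate scheme with the unbounded operator $\tilde D$ inserted — somewhat more work than the phrase ``essentially for free'' suggests, though it does go through, and your argument has the minor advantage of not requiring the explicit computation of the $d\vartheta$-coefficient of the limit.

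One small inaccuracy worth noting: you state $\n{\tilde D\,e^{stD^2}}_\tau\le Cs^{-1/2}\theta(st/2)$, which is a valid bound for $t\ge 1$ but not the natural one; since $\tilde D$ carries no $\sqrt t$ prefactor, the cleaner bound obtained from $\tilde D\,e^{stD^2}=\tilde D\,e^{stD^2/2}\bigl(e^{stD^2/2}-P\bigr)$ (using $\tilde DP=0$) is $\n{\tilde D\,e^{stD^2}}_\tau\le C(st)^{-1/2}\theta(st/2)$, with the extra $t^{-1/2}$, and it is precisely this extra decay that makes the rerun unproblematic.
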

\begin{proof}
Consider first even dimensional fibres. By Remark \ref{dterm.rem}, we look at the $d\vartheta$-term of $\tr \left(J e^{\breve{\mathbb X}^2}\right)=\Str\left((\hat \o_\C  \otimes J^\cF )e^{\breve{\mathbb X}^2}\right)$.  We compute its large time limit with Theorem \ref{HeatThm2}:
\begin{equation}
\lim_{t\to\infty} \Str\left((\hat \o_\C  \otimes J^\cF )e^{\breve{\mathbb X}^2}\right)= \Str\left((\hat \o_\C \otimes J^\cF) Pe^{(\breve{R}_0P)^2}\right)
\end{equation}
%$\sum_{k\geq 0}\Str \frac{1}{k!}\left(\hat \o_\C P(\hat R_0 P)^{2k}\right)=\sum_{k\geq 0}\Str\frac{1}{k!}\left(\hat \o_\C P(R_0+\left(N-\frac{n}{2}\right)d\vartheta)^{2k}\right)$
%\end{multline*}
where $\breve{R}_0=R_0+\left(N-\frac{n}{2}\right)d\vartheta$.
Then the $d\vartheta$-term of the right hand side is equal to
$$\sum_{k\geq 0} \frac{1}{k!}\Str\left((\hat \o_\C  \otimes J^\cF) P(R_0 P)^{2k+1}(N-\frac{n}{2})P\right)
$$
 which is equal to zero because for each $k$
\begin{multline*}
  \Str\left((\hat \o_\C  \otimes J^\cF) P(R_0 P)^{2k+1}(N-\frac{n}{2})P\right)=\Str\left((\hat \o_\C  \otimes J^\cF)P(R_0 P)^{2k}(R_0P)(N-\frac{n}{2})P\right)\\
  = \Str\left((\hat \o_\C  \otimes J^\cF) P(R_0 P)^{2k}(N-\frac{n}{2})(R_0P)\right)= \Str\left((N-\frac{n}{2})(R_0P)(\hat \o_\C  \otimes J^\cF)  P(R_0 P)^{2k}\right)\\
  =-\Str\left((N-\frac{n}{2})(\hat \o_\C  \otimes J^\cF) P(R_0 P)^{2k+1}\right)=-\Str\left( (\hat \o_\C  \otimes J^\cF) P(R_0 P)^{2k+1}(N-\frac{n}{2})P\right)
\end{multline*}
where we have used that $R_0P$ anti-commutes with $\hat \o_\C  \otimes J^\cF$, and that $N-\frac{n}{2}$ is even.
 
 For odd dimensional fibres, the computation is similar.
\end{proof}

\begin{prop} 
\label{Heta}  
\begin{itemize}
\item[(I)] If the Novikov--Shubin invariants are positive, then there exists $\ep>0$ such that in $\Omega^\bullet(B; \End^1_{\cA}\Omega_{L^2}(E/B))$, \emph{i.e.} in the trace norm
$$
\eta_\tau(t)= \mathcal O(t^{-1-\ep}) \;\;, \;\; \text{as } t\to \infty \ .
$$

\item[(II)] If $Z$ is of determinant class and $L^2$-acyclic, then $\ds \int_1^\infty \eta_\tau (t)dt< \infty$ .
\end{itemize}
\end{prop}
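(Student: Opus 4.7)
The plan is to treat $\eta_\tau(t)$ just as $F_\tau^{\wedge}(t)$ was treated in Proposition~\ref{Htor}, exploiting the fact that the preceding Lemma already establishes $\lim_{t\to\infty}\eta_\tau(t)=0$, so only the rate of convergence must be quantified. In the even-dimensional case, I would start from the explicit formula
\begin{equation*}
\eta_\tau(t)=\frac{C}{t}\,\Phi\,\tr_\tau\bigl(J\,[N-\tfrac n2,\mathbb X_t]\,e^{\mathbb X_t^2}\bigr)
\end{equation*}
obtained from \eqref{McKt} together with the grading identification $-\mathbb A_t^2=\mathbb X_t^2$. The key point is the explicit evaluation
\begin{equation*}
[N-\tfrac n2,\mathbb X_t]=-\tfrac{\sqrt t}{2}(d^Z+d^{Z,*})+\tfrac{c(T)}{4\sqrt t},
\end{equation*}
which has exactly the same structural profile as a factor $\sqrt t\,D$ plus a bounded zeroth-order piece decaying like $t^{-1/2}$. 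Since $(d^Z+d^{Z,*})^2=-4D^2$, the estimates of Lemma~\ref{HeatLem1} apply verbatim to $(d^Z+d^{Z,*})^c e^{stD^2}$.

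Next I would apply Duhamel's formula to $e^{\mathbb X_t^2}$ and insert $[N-\tfrac n2,\mathbb X_t]$ at the leftmost position, yielding a sum analogous to~\eqref{SplitDuhamel} but with an extra leading factor $c_{-1}\in\{0,1\}$ encoding the $\sqrt t\,D$ part of the commutator (the bounded $c(T)/\sqrt t$ contribution is handled by essentially the same bookkeeping and is strictly better behaved). The splitting of the simplex $\Delta^n$ along $\bar s(t)$, followed by the partial-integration trick of Section~\ref{int.part} to remove all $c_i=2$ factors on the small variables in $I$, then produces terms of the form $K(t,n,I,J;c_0,\dots,c_n;a_0,\dots,a_n)$ to which Proposition~\ref{OtherTermsProp} applies. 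Since the limit of $\eta_\tau$ is already known to vanish, the remainder estimate \eqref{esti} of that proposition gives
\begin{equation*}
\bigl|\tr_\tau(J[N-\tfrac n2,\mathbb X_t]e^{\mathbb X_t^2})\bigr|_\tau
\le C\Bigl(\bar s(t)^{-\tfrac{m_B+1}{2}}\theta\bigl(\tfrac{\bar s(t)t}{2}\bigr)+\|P\|_\tau\,\bar s(t)^{\tfrac12}\Bigr),
\end{equation*}
where the extra half-power of $\bar s(t)^{-1/2}$ compared with Proposition~\ref{Htor} accounts for the additional $\sqrt t\,D$ factor.

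For part (I), I would invoke Lemma~\ref{barst}(2) with $\bar s(t)=t^{\beta-1}$ for suitable $\beta$; the positive Novikov--Shubin condition then gives $\bar s(t)^{-(m_B+1)/2}\theta(\bar s(t)t/2)=\mathcal O(t^{-\epsilon'})$ for some $\epsilon'>0$, and $\|P\|_\tau\,\bar s(t)^{1/2}$ is even $\mathcal O(t^{(\beta-1)/2})$ which is $\mathcal O(t^{-\epsilon''})$. Together with the prefactor $1/t$ this yields $\eta_\tau(t)=\mathcal O(t^{-1-\epsilon})$ for $\epsilon=\min(\epsilon',\epsilon'')$. For part (II), $L^2$-acyclicity gives $P=0$, so the $\|P\|_\tau\,\bar s(t)^{1/2}$ term is absent; then Lemma~\ref{barst}(3) applied with $d=(m_B+1)/2$ shows
\begin{equation*}
\int_1^\infty \bar s(t)^{-\tfrac{m_B+1}{2}}\theta\bigl(\tfrac{\bar s(t)t}{2}\bigr)\,\frac{dt}{t}<\infty,
\end{equation*}
giving $\int_1^\infty\eta_\tau(t)\,dt<\infty$. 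In the odd-dimensional case, one runs exactly the same argument on $\Phi\tr_\tau\bigl((d\mathbb A_t/dt)\,e^{-\mathbb A_t^2}\bigr)^{\mathrm{even}}$, observing that $d\mathbb A_t/dt$ again splits into a piece of order $\sqrt t\,D/t$ plus bounded pieces of higher negative powers of $t$, so the same simplex-splitting and partial-integration bookkeeping applies.

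The main obstacle is purely combinatorial: confirming that the extra operator $[N-\tfrac n2,\mathbb X_t]$ (or $d\mathbb A_t/dt$ in the odd case) only increases the exponent of $\bar s(t)^{-1/2}$ in the remainder by a bounded amount, so that the choices of $\bar s$ from Lemma~\ref{barst} still suffice. This reduces to checking that $[N-\tfrac n2,\mathbb X_t]$ does not contribute any factor with $c_i=2$ in the Duhamel expansion, which is manifest from its explicit form above.
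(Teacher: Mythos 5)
Your proposal is correct and takes essentially the same approach as the paper. The paper works with the Duhamel expansion of $e^{\breve{\mathbb X}_t^2}$ on $\breve B = B\times\mathbb{R}^*_+$ and extracts the $d\vartheta$-coefficient (Remark \ref{dterm.rem}), which is equivalent to your direct insertion of $[N-\tfrac n2,\mathbb X_t]$ into the Duhamel expansion on $B$; the extra power $\bar s(t)^{-1/2}$ you track explicitly is handled implicitly in the paper by the fact that $m_{\breve B}=m_B+1$. One small point worth fixing: since $[N,\hat c(e_i)]=c(e_i)$ and $\mathbb X_t$ contains $-\hat c(T)/(4\sqrt t)$, the commutator is $[N-\tfrac n2,\mathbb X_t]=-\tfrac{\sqrt t}{2}(d^Z+d^{Z,*})-\tfrac{c(T)}{4\sqrt t}$ (with a minus sign on the second term), though this has no effect on the estimate.
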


\begin{proof}
 Consider the Duhamel expansion of $e^{\breve{\mathbb X}_t^2}$, split it as in \ref{spl-Duham} and integrate by parts. Then we proceed exactly as in the proof of Proposition \ref{Htor}, choosing $\bar s(t)$ as in Lemma~\ref{barst} part (2) or as in part (3), respectively, in the two cases. 

%We substitute $e^{\bar s t D^2}=(e^{\bar s t D^2}-P)+P$  and apply the determinant class condition to $e^{\bar s t D^2}-P$, obtaining that all such remainder terms are integrable. Assuming $L^2$-acyclicity, $P=0$, hence the terms with $P$ are not there, and the convergence holds.
\end{proof}

\begin{cor}\label{etacor}
If any of the two hypothesis in (I) or (II) of Proposition \ref{Heta} is
verified, then the $L^2$-eta form of the signature operator is well defined as
\begin{equation}
\label{etaform}
\eta_\tau(\mathcal{D}^{sign})=
\begin{cases}
\ds(2\pi i)^{-\frac{1}{2}}\Phi\int_0^\infty\trt\left(\frac{d\mathbb{A}_t}{dt}e^{-\mathbb{A}_t^2}\right)^{even}dt\,,  &\dim Z=2l+1\\
\ds(2\pi i)^{-\frac{1}{2}}\Phi\int_0^\infty\trt\left(J\frac{d\mathbb{A}_t}{dt}e^{-\mathbb{A}_t^2}\right)dt\,,&\dim Z=2l\ . 
\end{cases}
\end{equation}
\end{cor}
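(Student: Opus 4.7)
The plan is to show that the integrand in \eqref{etaform} is integrable over $(0,\infty)$, splitting the interval into $(0,1]$ and $[1,\infty)$. The behavior at large $t$ is the content of Proposition~\ref{Heta}: under hypothesis~(I) we have $\eta_\tau(t)=\mathcal O(t^{-1-\ep})$, which is manifestly integrable on $[1,\infty)$, while under hypothesis~(II) the integrability on $[1,\infty)$ is stated directly. So the only remaining task is to verify integrability at $t\to 0$.

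For the small time behavior, I would argue that this is a purely local problem along the fibres and is insensitive to the $L^2$-setup: the von Neumann trace $\tr_\tau$ on the flat bundle of $\cA$-modules $\cF$ coincides, for heat kernels localized in a fundamental domain, with the classical fibrewise trace up to the factor $\rk_\tau\cF$. Hence the usual local index arguments for Bismut superconnections apply verbatim. Concretely, using the rescaling \eqref{AX-resc1} together with Getzler's rescaling along the fibres, one obtains an asymptotic expansion of $\tr_\tau\bigl(J\,\tfrac{d\mathbb A_t}{dt}\,e^{-\mathbb A_t^2}\bigr)^{\mathrm{even}}$ (resp.\ $\tr_\tau(\tfrac{d\mathbb A_t}{dt}\,e^{-\mathbb A_t^2})^{\mathrm{even}}$ in the odd dimensional $\Cli$-superconnection setting) whose leading term is of order $t^{-1/2}$ in the even dimensional case (the \textit{supertrace} kills the potentially divergent $t^{-1}$ term) and analogously $\mathcal O(1)$ in the odd dimensional case after the $\Cli$-supertrace. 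In particular $\eta_\tau(t)$ is continuous and $\mathcal O(t^{-1/2})$ as $t\to 0$, so $\int_0^1 \eta_\tau(t)\,dt<\infty$.

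Combining the two, the integral defining $\eta_\tau(\mathcal D^{sign})$ in \eqref{etaform} converges absolutely in the $\tau$-norm on $\Omega^\bullet(B;\End_\cA^1 \Omega_{L^2}(E/B;\cF))$, yielding a well-defined differential form on $B$.

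The main obstacle is the small-time analysis, since the large-time regime is already settled by Proposition~\ref{Heta}. Here one must make sure that the Bismut--Getzler rescaling on the fibre is compatible with the decomposition $\cF=\widetilde E\times_\Gamma M$ into a fundamental domain, so that locally the heat kernel supertrace reduces to a classical local computation with coefficients in the fibre bundle of $\cA$-modules. Once this is in place, the cancellation of the potentially divergent $t^{-1}$ and $t^{-1/2}$ terms in the supertrace proceeds exactly as in \cite{BGV, BL, Lo2}, which is why no extra regularity of the spectral data is needed at small time.
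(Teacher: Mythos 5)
Your proof is correct and is exactly the argument the paper implicitly relies on (no explicit proof of this corollary is printed): large-time integrability comes directly from Proposition~\ref{Heta}, while small-time integrability follows from the standard local index analysis of the Bismut superconnection heat kernel, which carries over to the $L^2$-setting because the short-time heat kernel asymptotics are local and $\tr_\tau$ reduces to the classical fibrewise trace on a fundamental domain of the covering. Your observation about compatibility of the Getzler rescaling with the decomposition $\cF=\widetilde E\times_\Gamma M$ is indeed the only point where the $L^2$-setup could in principle differ from the classical references \cite{BGV,BL,Lo2}, and the paper's structure (stating the corollary right after Proposition~\ref{Heta} with no further comment) confirms the authors take this small-time step for granted, just as you do.
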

  
If the fibre is odd-dimensional, then the zero-degree part of $\eta_\t(D^{sign}_{odd})$ is a function on $B$ whose value at the point $b$ is equal to the Cheeger--Gromov $L^2$-eta invariant of $Z_b$, introduced in \cite{CG}.
Guided by the fact that Cheeger and Gromov prove the existence of the
$L^2$-eta invariant of the signature operator without any condition, we
ask the following. 
 \begin{que}
Can the $L^2$-eta form of the signature operator be defined dropping the
$L^2$-acyclicity condition and all other extra conditions?
 \end{que}

\subsection{Local index theorems}

The proofs of Propositions \ref{Htor} and \ref{Heta} defined $\cT_\t$ and
$\eta_\tau$ as continuous differential forms on $B$. Our estimates are not
good enough to prove that $\eta_\tau$ is a $C^1$ differential form. Nevertheless,
we can use weak exterior derivatives to prove  local index theorems. Gong and
Rothenberg  proved the same result under stronger regularity hypothesis
\cite[Th. 3.2]{GR}.
 \begin{dfn} 
 A continuous $k$-form $\phi$ on $B$ is said to have weak exterior derivative $\psi$ if  for every smooth $(k+1)$-simplex $c\colon \Delta^{k+1}\rightarrow B$ 
  $$
 \int_c\psi=\int_{\partial c}\phi \ .
 $$
\end{dfn}

Let $Z\to E\stackrel{p}{\to}B$ be a smooth fibre bundle with connected closed fibres, let $\mathcal F\to E$ be a flat bundle of $\mathcal A$-modules as in \eqref{coeff-bundle} or \eqref{coeff-bundle2}.%with possibly a flat duality structure.
\begin{theorem}
\label{weak-thm}
Assume the fibres $Z$ have positive Novikov--Shubin invariants. Then the 
 form $\mathcal{T}_\tau$ has weak exterior derivative
$$
d\mathcal{T}_\tau=\int_{E/B}e(TZ, \nabla^{TZ})\ch{}^\circ(\cF, g^\cF)- \ch{}^\circ_\t(H_{L^2}(E/B;\cF), g^{H_{L^2}})\ .
$$
If the fibres are determinant class and $L^2$-acyclic, then $\mathcal{T}_\tau$ has weak exterior derivative
$$
d\mathcal{T}_\tau=\int_{E/B}e(TZ, \nabla^{TZ})\ch{}^\circ(\cF, g^\cF)\ .
$$
\end{theorem}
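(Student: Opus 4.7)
The argument is a transgression-and-limit computation. Integrating \eqref{trF^} in $t$ over a compact interval $[\epsilon,T]\subset(0,\infty)$ gives
\[
F_\tau(T)-F_\tau(\epsilon) = d\int_\epsilon^T\frac{F_\tau^\wedge(t)}{t}\,dt,
\]
where the $d$ on the right is the exterior derivative on $B$. The scalar compensation appearing in~\eqref{TI} is locally constant in $b\in B$ (it depends on the fibre only through $L^2$-Betti numbers and $\rk_\tau\cF$), so it is annihilated by $d$ and the same identity holds with $F_\tau^\wedge(t)$ replaced by $F_\tau^\wedge(t)-\mathrm{comp}(t)$. The plan is to pair this identity with a smooth simplex and then push $\epsilon\to 0$, $T\to\infty$.

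Fix a smooth $(k+1)$-simplex $c\colon\Delta^{k+1}\to B$, where $k$ is the degree of $\mathcal T_\tau$. Using Corollary~\ref{torcor}, I write
\[
\int_{\partial c}\mathcal T_\tau = -\int_{\partial c}\int_0^\infty\frac{F_\tau^\wedge(t)-\mathrm{comp}(t)}{t}\,dt,
\]
then swap the two integrals by Fubini. For each fixed $t$, the integrand is a smooth differential form on $B$ (smoothness in $b$ follows from Proposition~\ref{SmoothProp}), so Stokes together with the transgression formula yields
\[
\int_{\partial c}\bigl[F_\tau^\wedge(t)-\mathrm{comp}(t)\bigr] = \int_c dF_\tau^\wedge(t) = t\int_c\frac{d}{dt}F_\tau(t).
\]
Dividing by $t$, integrating over $(0,\infty)$, and swapping the integrals again by Fubini gives
\[
\int_{\partial c}\mathcal T_\tau = -\int_c\int_0^\infty\frac{d}{dt}F_\tau(t)\,dt = \int_c\bigl[F_\tau(0^+)-F_\tau(\infty)\bigr].
\]
The small-time limit $F_\tau(0^+)=\int_{E/B}e(TZ,\nabla^{TZ})\ch{}^\circ_\tau(\cF,g^{\cF})$ is the Bismut--Lott local calculation recalled in the proof of Theorem~\ref{L2BL} (with an $\mathcal O(\sqrt t)$ correction in odd fibre dimension that vanishes in the limit and is consistent with $e(TZ)=0$ there), while the large-time limit $F_\tau(\infty)=\ch{}^\circ_\tau(H_{L^2}(E/B;\cF),g^{H_{L^2}})$ was established in Theorem~\ref{HeatThm2} and Theorem~\ref{L2BL}. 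This gives the first formula; when the fibres are $L^2$-acyclic one has $H_{L^2}=0$, so the second term vanishes and the second formula follows.

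The main obstacle is justifying the two Fubini exchanges and the passage to the endpoints $t=0,\infty$. These require integrability uniform on compact subsets of $B$. Near $t=0$, the standard Bismut--Lott small-time expansion gives $F_\tau^\wedge(t)-\mathrm{comp}(t)=\mathcal O(t)$ (the compensation is designed precisely to match $F_\tau^\wedge(0^+)$), so the integrand is bounded at the origin and $\frac{d}{dt}F_\tau(t)$ is $L^1$ near~$0$. Near $t=\infty$, Proposition~\ref{Htor}(I) yields $\n{F_\tau^\wedge(t)-\chi_\tau'/2}_\tau=\mathcal O(t^{-\epsilon})$ under positive Novikov--Shubin, while Proposition~\ref{Htor}(II) gives directly the integrability of $F_\tau^\wedge(t)/t$ at infinity in the determinant-class $L^2$-acyclic case; together with the exponential decay of the $(1-2t)e^{-t}$ term, this controls the Fubini swaps at the upper end. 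Uniformity in $b$ over the compact image $c(\Delta^{k+1})\subset B$ follows from Remark~\ref{unif.estimates}, and Stokes applies on $c$ for each fixed $t$ by the smoothness noted above; hence both interchanges are legitimate and the computation carries through.
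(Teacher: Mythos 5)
Your proposal is correct and follows essentially the same route as the paper: integrate the transgression formula \eqref{trF^} over a finite interval, apply Stokes on the test simplex, and then pass to the limits $t\to 0$ and $t\to\infty$ using the small-time Bismut--Lott computation and the large-time convergence from Theorem~\ref{HeatThm2}, with integrability supplied by Proposition~\ref{Htor}. The paper's proof is more terse (it writes only the finite-interval identity and says ``which implies the desired result''), so your explicit treatment of the Fubini swaps and the uniformity on the compact image $c(\Delta^{k+1})$ is a useful unpacking of the same argument rather than a different one.
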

\begin{proof}
Let $c$ be a $(k+1)$-smooth chain in $B$. By \eqref{trF^} and the theorem of Stokes, on a finite interval $0<t<T<\infty$ we have
$$
\int_c F_\tau(t)-\int_c F_\tau(T)=-\int_{\partial c}\int_t^T \frac{1}{t}\left(F_\t^{\wedge}(t)-\frac{\chi'_\tau}{2}-\left(\frac{n \rk_\tau \mathcal F\cdot \chi_\tau}{4} -\frac{\chi_\tau'}{2}\right)(1-2t)e^{-t}\right)dt
$$
which implies the desired result.
\end{proof}
Let now $Z\to E\stackrel{p}{\to}B$, $\mathcal F\to E$ be  as above and let $\cF$ be endowed with a flat duality structure. The following local formulas are deduced in the same way from \eqref{McKt} and \eqref{trasgr-odd}.

\begin{theorem}
\label{weak-thm2}
Assume $\dim Z=n=2k$. If the fibres have positive Novikov--Shubin invariants, then
the form $\eta_\tau$ has weak exterior derivative
$$
d\eta_\tau=\int_{E/B} L(E/B)\;p_\tau (\nabla^\mathcal{F}, J^\cF) -p_\tau(\nabla^{H_{L^2}}, J^{H_{L^2}})\ .
$$
If the fibres are of determinant class and $L^2$-acyclic, then the same holds, with the last term on the right hand side vanishing.

If $n=2k+1$, and assuming either positive Novikov--Shubin, or determinant class and $L^2$-acyclicity, then the form $\eta_\tau$ has weak exterior derivative
$$
d\eta_\tau=\int_{E/B} L(E/B)\;p_\tau (\nabla^\mathcal{F}, J^\cF) \ .
$$
\end{theorem}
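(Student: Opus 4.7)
The plan is to mirror the argument used for Theorem~\ref{weak-thm}, substituting the transgression identity \eqref{McKt} (in the even case) or \eqref{trasgr-odd} (in the odd case) for the transgression of $F_\tau$. Fix a smooth $(k+1)$-chain $c$ in $B$, and for $0<a<T<\infty$ integrate the transgression formula over $t\in[a,T]$. In the even case this gives, by Stokes,
\begin{equation*}
  \int_c p_\tau(T) - \int_c p_\tau(a)
  = \int_a^T\int_c d\eta_\tau(t)\,dt
  = \int_{\partial c}\int_a^T\eta_\tau(t)\,dt\ ,
\end{equation*}
with the analogous identity in the odd case.

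The next step is to pass to the limit $a\to 0$, $T\to\infty$. For the small-time limit of $p_\tau(t)=\Phi\tr_\tau(J e^{-\mathbb{A}_t^2})$ I would invoke the local family index theorem of Bismut--Lott and Lott \cite[Prop.~31]{Lo2}, which was already used in the proof of Theorem~\ref{L2indT} to give
\begin{equation*}
  \lim_{a\to 0}p_\tau(a)=\int_{E/B}L(E/B)\,p_\tau(\nabla^\cF,J^\cF)\ .
\end{equation*}
For the large-time limit I would apply Theorem~\ref{HeatThm2} together with the identification of Lemma~\ref{comput.p} to conclude
\begin{equation*}
  \lim_{T\to\infty}p_\tau(T)
  = \tr_\tau\bigl(J^H P e^{(R_0P)^2}\bigr)
  = p_\tau(\nabla^{H_{L^2}},J^{H_{L^2}})\ ,
\end{equation*}
which vanishes under the determinant class and $L^2$-acyclic hypothesis because then $P=0$. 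The integrability of $\eta_\tau(t)$ on $[1,\infty)$ under either hypothesis is exactly Proposition~\ref{Heta}, so the inner integral on the right-hand side converges as $T\to\infty$. Integrability at $t=0$ is classical: the standard Getzler-type rescaling argument adapted to $\mathbb{A}_t$, as in \cite{BL, Lo2}, produces an expansion whose singular terms are cancelled by the $d\mathbb{A}_t/dt$-factor, giving $\eta_\tau(t)=O(\sqrt t)$ in the odd-fibre case and a bounded integrand in the even-fibre case.

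In the odd-dimensional case I would carry out the same argument, replacing $p_\tau$ by the odd Chern character $\tr_\tau(e^{-\mathbb{A}_t^2})^{odd}$ and using \eqref{trasgr-odd}; Theorem~\ref{HeatThm2} again controls the $T\to\infty$ limit (with no $p_\tau(\nabla^H,J^H)$ contribution by the argument already given in the proof of Corollary~\ref{etacor}), while Bismut's local formula provides the small-time limit $\int_{E/B}L(E/B)\,p_\tau(\nabla^\cF,J^\cF)$.

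The main obstacle is justifying the interchange of $\int_{\partial c}$ with the time limits in the Stokes identity above. For $T\to\infty$ the integrability on $[1,\infty)$ from Proposition~\ref{Heta} is uniform on compact subsets of $B$ (Remark~\ref{unif.estimates} and Lemma~\ref{barst}), which is enough for the interchange. For $a\to 0$ one has to check that $\int_a^{1}\eta_\tau(t)\,dt$ converges in the $C^0$-topology on $\partial c$, which reduces to the locally uniform small-time asymptotic expansion of the Bismut superconnection heat kernel; this is classical but requires care because $\mathbb{A}$ differs from the genuine Bismut superconnection by the zero-order term in~\eqref{AB}. Once these two limits are established the identity defining the weak exterior derivative drops out directly.
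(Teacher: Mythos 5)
Your proposal follows the paper's proof exactly: the paper proves Theorem~\ref{weak-thm2} by a one-line reference to the argument for Theorem~\ref{weak-thm}, which is precisely the Stokes-on-a-finite-interval argument you spell out, with the small- and large-time limits controlled by \cite[Prop.~31]{Lo2}, Theorem~\ref{HeatThm2} together with Lemma~\ref{comput.p}, and Proposition~\ref{Heta}. Your expansion of the details—including the discussion of uniform integrability and the zero-order discrepancy in \eqref{AB}—goes beyond what the paper records but is consistent with, and fills in, the intended argument.
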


\section{\texorpdfstring{$L^2$}{L2}-rho form}
  Let $\pi \colon \tilde E\to E$ be a normal $\G$-covering of the fibre bundle
  $p\colon E\to B$ as in Section \ref{Sect.NCF}. Recall that in this case
  $\mathcal A=\mathcal N\G$, $M=l^2\G$, and  $\t$ is the canonical trace on
  $\mathcal N\G$.
  
 To define the $L^2$-rho form of the family of signature operators in this setting, we introduce the following notation: let $\underline{D}^{sign}$ be the family of signature operators along the compact fibres of $E\rightarrow B$ (\emph{i.e.} untwisted), and $D^{sign}$ be the family of signature operators twisted by $\mathcal F=\tilde E\times_\G l^2\G$. 
\begin{dfn}
If $Z$ is of determinant class and $L^2$-acyclic, or if $Z$ has positive Novikov--Shubin invariants, the \emph{$L^2$-rho form} is the difference
$$
\rho_\t(D^{sign}):=\eta_\t(D^{sign})-\eta(\underline{D}^{sign})\in C^0(B, \Lambda^* B) \ .
$$
\end{dfn}
If the fibre is odd-dimensional, then the zero-degree part of $\rho_\t(D^{sign}_{odd})$ is a function on $B$ whose value at the point $b$ is equal to the Cheeger--Gromov $L^2$-rho invariant of $Z_b$, \cite{CG}.

The local index theorem implies the following.

\begin{lem}
\label{wCL}
The $L^2$-rho form $\rho_{\tau}(D^{sign})$ is weakly closed in the following cases:\begin{itemize}
\item for odd dimensional fibres, whenever it is well defined (conditions of Corollary \ref{etacor});
\item  for even dimensional fibres, when $Z$ is of determinant class, acyclic and $L^2$-acyclic.
\end{itemize}

\end{lem}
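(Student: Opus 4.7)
The plan is to reduce weak closedness of $\rho_\tau(D^{sign})$ to a cancellation of local index densities after subtracting the twisted and untwisted local index formulas. First, I would apply Theorem~\ref{weak-thm2} to the twisted $L^2$-eta form $\eta_\tau(D^{sign})$. For odd dimensional fibres, whenever $\eta_\tau$ is defined (under either hypothesis), this gives
\[ d\eta_\tau(D^{sign}) = \int_{E/B} L(E/B)\, p_\tau(\nabla^{\mathcal F}, J^{\mathcal F}) \]
as a weak exterior derivative. For even dimensional fibres, the $L^2$-acyclicity hypothesis (combined with determinant class) triggers the second case of Theorem~\ref{weak-thm2}, so that $p_\tau(\nabla^{H_{L^2}}, J^{H_{L^2}})$ does not appear and the same formula holds.

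In parallel, for the untwisted operator $\underline{D}^{sign}$ the classical local family index theorem of Bismut--Cheeger applies, because the fibres of $p\colon E\to B$ are compact and all data depends smoothly on $b \in B$. This gives a strong identity
\[ d\eta(\underline{D}^{sign}) = \int_{E/B} L(E/B)\, p(\nabla^{\underline F}, J^{\underline F}), \]
where $\underline F = E\times \mathbb C$ is the trivial line bundle. For odd fibres no index bundle contribution arises; for even fibres the assumption that $Z$ is acyclic means $\ker \underline{D}^{sign} = H^\bullet(Z;\mathbb R) = 0$, which kills the classical index bundle term and leaves exactly the expression above.

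The core of the proof is then the pointwise identity of differential forms
\[ p_\tau(\nabla^{\mathcal F}, J^{\mathcal F}) = p(\nabla^{\underline F}, J^{\underline F}) \]
on $E$. I would establish this from two observations: first, the standard scalar product on $\ell^2\Gamma$ is $\Gamma$-invariant, so the induced family of admissible metrics $g^{\mathcal F}$ on $\mathcal F = \tilde E\times_\Gamma \ell^2\Gamma$ is $\nabla^{\mathcal F}$-parallel, whence $\omega(\mathcal F, g^{\mathcal F}) = \nabla^{\mathcal F, *} - \nabla^{\mathcal F} = 0$; by the definition of $p_\tau$ via $\cos$ or $\sin$ of $\omega^2/(8\pi)$, only the degree-zero contribution survives. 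Second, with the natural duality structure $J^{\mathcal F}$ coming from the real structure on $\ell^2(\Gamma,\mathbb R)$, that degree-zero contribution reduces to $\tau(J^{\mathcal F}) = \tau(\mathrm{id}) = 1$. The trivial bundle $\underline F$ gives $p(\nabla^{\underline F}, J^{\underline F}) = 1$ by the same reasoning, so the two characteristic classes coincide. Subtracting the two local formulas yields $d\rho_\tau(D^{sign}) = 0$ in the weak sense.

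The main subtlety I anticipate is ensuring consistency of the duality structures $J^{\mathcal F}$ and $J^{\underline F}$ in the even dimensional case, where the sign $\epsilon = \pm 1$ entering Definition~\ref{fdb} and Lemma~\ref{redstructure} depends on the parity of $\dim Z$, as well as checking that the Bismut--Cheeger transgression formula for $\underline{D}^{sign}$ in the odd case takes exactly the form used above (with no exotic normalization factor). Once the conventions are aligned, the cancellation is immediate and no further analytic input beyond Theorem~\ref{weak-thm2} is required.
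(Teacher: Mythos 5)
Your proposal is correct and follows essentially the same route as the paper's (very terse) proof: subtract the weak local index formulas for the twisted and untwisted signature operators and observe that the two index densities agree, since $p_\tau(\nabla^{\mathcal F},J^{\mathcal F})=p(\nabla^{\underline F},J^{\underline F})=1$ because $\omega(\mathcal F,g^{\mathcal F})=0$ and $\tau(\mathrm{id})=1$. The paper states this in one line (``It suffices to look at the weak local index formulas''); you have just made the cancellation, and the role of acyclicity and $L^2$-acyclicity in suppressing the two index-bundle terms, explicit.
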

\begin{proof} 
It suffices to look at the weak local index formulas to get the desired equality.
\end{proof}

The following proposition shows that, as usual, when the form
$\rho_{\tau}(D^{sign})$ is closed, then its cohomology class is independent of
the vertical metric $g^{TZ}$. This is in analogy with \cite[Theorem 3.24 and
Corollary  3.25]{BL}.

\begin{prop} Let $(g_u)_{u\in[0,1]}$ be a path of metrics on the vertical tangent bundle $T(E/B)$, and $D_u^{sign}$ the corresponding family of signature operators. Let us assume that $\rho_{\tau}(D_u)$ is weakly closed. 
Suppose $Z$ is determinant class and $L^2$-acyclic. Then the cohomology class of $\rho_{\tau}(D_u)$ is constant in $u$.
\end{prop}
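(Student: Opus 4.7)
The plan is to promote the one-parameter family $(g_u)_{u\in[0,1]}$ to a single geometric family over the enlarged base $B\times[0,1]$ and then apply the weak local index formula to this extended family. Concretely, I would consider the product fibre bundle $\tilde E\times[0,1]\to E\times[0,1]\to B\times[0,1]$, endowed with the vertical Riemannian metric whose restriction to $\{u\}\times B$ is $g_u$ (plus an arbitrary horizontal distribution interpolating the chosen ones). All hypotheses of Corollary~\ref{etacor} are then satisfied fibrewise over $B\times[0,1]$, so the $L^2$-eta form $\eta_\tau(\mathcal D^{sign})$ for this extended family is well defined; subtracting the corresponding untwisted extended eta form yields an $L^2$-rho form $\tilde\rho_\tau\in C^0(B\times[0,1],\Lambda^\bullet(B\times[0,1]))$, whose restriction to $\{u\}\times B$ is $\rho_\tau(D_u)$.

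Next I would decompose $\tilde\rho_\tau=\rho^{(0)}_\tau+du\wedge\rho^{(1)}_\tau$ with $\rho^{(0)}_\tau|_{\{u\}\times B}=\rho_\tau(D_u)$, and observe that the hypotheses of Lemma~\ref{wCL} are met on the extended base as well, so $\tilde\rho_\tau$ is weakly closed on $B\times[0,1]$. Given a smooth $k$-cycle $Z$ in $B$, the product $Z\times[u_0,u_1]$ is a $(k+1)$-chain in $B\times[0,1]$ with boundary $Z\times\{u_1\}-Z\times\{u_0\}$ (up to degenerate faces), so weak closedness of $\tilde\rho_\tau$ gives
\begin{equation*}
  \int_Z\rho_\tau(D_{u_1})-\int_Z\rho_\tau(D_{u_0})=\int_{\partial(Z\times[u_0,u_1])}\tilde\rho_\tau=0\;.
\end{equation*}
Since this holds for every cycle $Z$, the de Rham cohomology class of $\rho_\tau(D_u)$ is independent of $u$.

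The main technical obstacle is to guarantee that $\tilde\rho_\tau$ really exists as a continuous form on $B\times[0,1]$ and satisfies the extended weak local index formula (Theorem~\ref{weak-thm2}) globally. This requires that the estimates of Proposition~\ref{Heta} be uniform in the additional parameter $u$; concretely, the choice of $\bar s(t)$ in Lemma~\ref{barst} must work on compact subsets of the extended base, which in turn depends on continuity in $u$ of the spectral density function $\theta_{(b,u)}(t)=\trt(e^{tD_{b,u}^2}-P_{b,u})$ and on the determinant class/acyclicity condition being preserved along the path. The continuity of $\theta$ in $(b,u)$ (on compacta) follows as in the discussion after \eqref{NStheta} from the homotopy-invariance results of Gromov--Shubin and Efremov (see Remark~\ref{unif.estimates}); once this is in place, the iterated Duhamel expansion used in Sections~\ref{spl-Duham}--\ref{int.part} delivers the required bounds uniformly on $B\times[0,1]$, and the extended rho form together with its weak local index formula follow exactly as in the proofs of Corollary~\ref{etacor} and Theorem~\ref{weak-thm2}. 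With those uniform bounds secured, the pairing argument above completes the proof.
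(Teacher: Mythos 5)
Your proof takes the same route as the paper: extend the geometric family over $B\times[0,1]$, observe (via Lemma~\ref{wCL}) that the resulting $L^2$-rho form $\tilde\rho_\tau$ on the enlarged base is weakly closed, and then pair against cylinders $Z\times[u_0,u_1]$ over cycles $Z\subset B$ to conclude that the cohomology class of $\rho_\tau(D_u)$ is independent of $u$. The only thing you do beyond the paper is to spell out why the Duhamel estimates and the choice of $\bar s(t)$ are uniform in the extra parameter $u$ so that $\tilde\rho_\tau$ is genuinely a continuous form on $B\times[0,1]$ satisfying the weak local index formula; the paper leaves this implicit.
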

\begin{proof}
Let $\hat E=E\times [0,1]\rightarrow B\times [0,1]=\hat B$ the family with one added parameter $u\in [0, 1]$. 
Let $\Delta\subset B$ be a $ (k+1)$-simplex. Let $C=\Delta\times [0,1]$.
We have
\begin{equation*}
0=\int_{\partial C}\hat\rho=\int_\Delta \rho_{\tau,g_0}-\int_\Delta \rho_{\tau,g_1}+\int_{[0,1]}\int_{\partial \Delta} \hat{\rho} \ .
\end{equation*}
Because $\hat \rho$ is closed, we get  
$
\rho_{\tau\,|g_0}=\rho_{\tau\,|g_1} \in H^*(B)$.
\end{proof}
Rho-invariants have natural stability properties. For example, the
Cheeger--Gromov $\rho$-invariant of the signature operator is independent of
the metric \cite{CG2}. Indrava Roy proved the analogous stability property for
the foliated $\rho$-invariant of the longitudinal signature operator if one
has a holonomy invariant transverse measure \cite[Theorem 4.3.1]{R}.

\begin{rem}
Chang and Weinberger use the $L^2$-rho invariant of the signature operator to prove that, whenever the fundamental group contains torsion, a given homotopy type of closed oriented manifolds contains infinitely many different diffeomorphism types (using surgery theory for the construction of the manifolds) \cite{CW}.
 We could then conjecture that a similar result holds for a fiber homotopy
 type, \emph{i.e.} that one can construct and then use rho-forms to
 distinguish non-fiber diffeomorphic but fiber homotopy equivalent
 maps. Because of the stability results one can expect for this kind of
 ``degree $0$ rho-invariants'' under the assumption that the maximal
 Baum-Connes assembly map for the classifying space for free actions is an
 isomorphism \cite{PS}, one can expect that such examples will require a group
 where such a very strong isomorphism result does not hold.
\end{rem}

\begin{rem} In the situation of Example \ref{ex1.2}, one could also consider a rho-type invariant $\cT_\t(T^HE, g^{TZ}, g^\cF)-\cT(T^HE, g^{TZ}, g^F)$ for acyclic $F$ and  $L^2$-acyclic $\mathcal F$. The significance of this weakly closed form is not yet understood.
\end{rem}

\bigskip
\no{\bf Acknowledgments.} 
We wish to thank Moulay Benameur, Jean-Michel Bismut, Ulrich Bunke, James
Heitsch, John Lott, Paolo Piazza and Michael Schulze for their interest in the project.
Sara Azzali was supported by an INdAM-Cofund fellowship. She wishes
to thank also for the support by the German Research Foundation (DFG) through the
Institutional Strategy of the University of G\"ottingen during large part of
the work on this paper. Sebastian Goette was supported in parts by the DFG
special programme ``Global Differential Geometry'' and by the DFG-SFB TR
``Geometric Partial Differential Equations''. Thomas Schick was partially
funded by the Courant 
Research Center ``Higher order structures in Mathematics'' within the German initiative of excellence.

\bigskip 

\bigskip


\begin{thebibliography}{1111}
\bibitem[Ati] {Ati} M.F.~Atiyah, Elliptic operators discrete groups and von Neumann algebras, \emph{Ast\'erisque} \textbf{32--33} (1976), 43--72.
\bibitem[Ay]{Ay} S. A. Ayupov, Center-valued traces on real operator algebras,  
\emph{Funct. Anal. Appl.} \textbf{26} (1992), no. 2, 77--83.
\bibitem[BW]{BW} S. Bates, A.Weinstein, Lectures on the Geometry of Quantization,  \url{math.berkeley.edu/~alanw/GofQ.pdf}.
\bibitem[BH2]{BH2}M.-T.~Benameur, J.~Heitsch, Index and Noncommutative geometry II: Dirac operators and index bundles, \emph{J. of $K$-Theory} \textbf{1} (2008), 305--356.
\bibitem[BH4]{BH4}M.-T.~Benameur, J.~Heitsch, The twisted higher harmonic signature for foliations, \emph{J. Diff. Geom.} \textbf{87} (2011), no. 3, 389--467.
\bibitem[BHW]{BHW} M.-T.~Benameur, J.~Heitsch, C. Wahl, An interesting example for spectral invariants, \emph{J. K-Theory} \textbf{13} (2014), 305--311.
\bibitem[BGV]{BGV} N. Berline, E. Getzler, M. Vergne.
\textit{Heat kernels and Dirac operators}, Springer--Verlag, New York 1992.
\bibitem[Bi]{Bi} J.-M.~Bismut, The Atiyah--Singer index theorem for families of Dirac operators: two heat 
equation proofs. \emph{Invent. Math.} \textbf{83} (1986), 91--151.
\bibitem[BC1]{BC1} J.-M.~Bismut, J.~Cheeger, Families index for manifolds with boundary, superconnections, and cones. I and II., \emph{J. Funct. Anal.} \textbf{89} (1990), no. 2, 313--363.
\bibitem[BC]{BC}J.-M.~Bismut, J.~Cheeger, Eta invariants and their adiabatic limits, \emph{J. Amer. Math. Soc.} \textbf{2} (1989), 33--70.
\bibitem[BF]{BF}J.-M. Bismut, D. Freed, The analysis of elliptic families, II. \emph{Comm. Math. Phys.} \textbf{107} (1986), 103--163.
\bibitem[BL]{BL}J-M.~Bismut, J.~Lott, Flat vector bundles, direct images and higher real analytic torsion, \emph{J. Amer. Math. Soc.} \textbf{8} (1995), 291--363.
\bibitem[Bo]{Bo} M. Bohn, On Rho invariants of fibre bundles, \url{http://arxiv.org/abs/0907.3530}.
\bibitem[BM]{BM}U.~Bunke, X.~Ma,
Index and secondary index theory for flat bundles with duality, in \lq\lq Aspects of boundary problems in analysis and geometry", 
\emph{Oper. Theory Adv. Appl.} \textbf{151}, Birkh\"auser, Basel (2004), 265--341.
\bibitem[CW]{CW} S.~Chang, S.~Weinberger, On Invariants of Hirzebruch and Cheeger-Gromov, \emph{Geom. Topol.}, \textbf{7}  (2003), 311--319.
 \bibitem[CG]{CG} J.~Cheeger, M.~Gromov, On the characteristic numbers of complete manifolds of bounded curvature and finite volume, in \emph{Differential geometry and complex analysis}, Springer, Berlin 1985, 115--154.
\bibitem[CG2]{CG2} J.~Cheeger, M.~Gromov, Bounds on the von Neumann dimension of $L\sp 2$-cohomology and the Gauss--Bonnet theorem for open manifolds. 
\emph{J. Differential Geom.} \textbf{21} (1985), no. 1, 1--34.
\bibitem[Co]{Co}A.~Connes, \emph{Noncommutative geometry}, Academic Press, Inc., San Diego, CA, 1994.
\bibitem[CS]{CS}A.~Connes, G.~Skandalis, The longitudinal index theorem for foliations. Publ. Res. Inst. Math. Sci. Kyoto \textbf{20} (1984) 1139--1183.
\bibitem[Da]{Da}X.~Dai, Adiabatic limits, nonmultiplicativity of signature, and Leray spectral sequence, \emph{J. Amer. Math. Soc.}  \textbf{4}  (1991), 265--321. 
\bibitem[ES]{ES} D. V.~Efremov, M.~Shubin, Spectrum distribution function and variational principle for automorphic operators on hyperbolic space. \emph{S\'eminaire sur les \'Equations aux D\'eriv\'ees Partielles}, 1988--1989, Exp. No. VIII, \'Ecole Polytech., Palaiseau, 1989. 
\bibitem[Ef]{Ef} A.~Efremov,
Cell decompositions and the Novikov--Shubin invariants, \emph{Russian Math. Surveys} \textbf{46} (1991), no. 3, 219--220.
\bibitem[Go]{Go}S.~Goette, Torsion Invariants for Families, Ast\'erisque \textbf{328} (2009), 161--206. 
\bibitem[GL]{GL} A.~Gorokhovsky, J.~Lott, 
Local index theory over \'etale groupoids, 
\emph{J. Reine Angew. Math.} \textbf{560} (2003), 151--198.
\bibitem[GR]{GR}D.~Gong, M.~Rothenberg, Analytic torsion forms for noncompact fibre bundles, Preprint (1997) \url{webdoc.sub.gwdg.de/ebook/serien/e/mpi_mathematik/1997/105.ps}.
\bibitem[Gra]{Gra} L. Grabowski, \textit{Group ring elements with large spectral density},
arXiv:1501.06122 (2014).\bibitem[GS]{GS}M.~Gromov, M.A.~Shubin,
Von Neumann spectra near zero, 
\emph{Geom. Funct. Anal.} \textbf{1} (1991), no. 4, 375--404. 
\bibitem[HL]{HL} J.~Heitsch, C.~Lazarov, A general families index theorem, \emph{K-theory}
\textbf{18} (1999), 181-202.
\bibitem[HL1]{HL1} J.~Heitsch, C.~Lazarov, Riemann--Roch--Grothendieck and Torsion for Foliations, \emph{J. Geom. Anal.} \textbf{12} (2002), no. 3, 437--468.
\bibitem[Ig]{Ig}K. Igusa, Higher Franz-Reidemeister torsion, AMS/IP Stud. Adv. Math. vol. 31, AMS, Providence, RI, 2002.
\bibitem[I]{I} K. Igusa, Axioms for higher torsion invariants of smooth bundles, \emph{J. Topology} \textbf{1} (2008). 
\bibitem[I1]{I1} K. Igusa, Outline of Igusa--Klein higher torsion. Lecture notes for the Workshop at A.I.M. Palo Alto, October 2009. \url{http://people.brandeis.edu/igusa/Papers/AIM0911.pdf}. 
\bibitem[Ka]{Ka} M.~Karoubi, \textit{Homologue cyclique et $K$-th\'eorie}, Ast\'erisque \textbf{149}, SMF 1987.
 \bibitem[LP1]{LP1}E. Leichtnam, P. Piazza, \'Etale groupoids, eta invariants and index theory. \emph{J. Reine Angew. Math.} \textbf{587} (2005), 169--233. 
\bibitem[Lo]{Lo} J.~Lott, Eta and Torsion, \emph{ Sym\'etries quantiques} (Les Houches, 1995),  947--955, North-Holland, Amsterdam, 1998. 
\bibitem[Lo2]{Lo2}J.~Lott, Secondary analytic indices,  
 in \emph{Regulators in Analysis, Geometry and Number Theory}, Birkh\"auser Prog. in Math. Series, vol. 171, eds. A. Reznikov and N. Schappacher, Boston,  231--293. 
 \bibitem[Lo3]{Lo3}J.~Lott, Superconnections and higher index theory,  \emph{Geom. Funct. Anal.} \textbf{2} (1992), no. 4, 421--454. 
 \bibitem[Lo4]{Lo4} J. Lott, Diffeomorphisms and noncommutative analytic torsion. \emph{Mem. Amer. Math. Soc.} \textbf{141} (1999), no. 673.
 \bibitem[Lu]{Lu} W.~L\"uck, \textit{$L\sp 2$-invariants: theory and applications to geometry and $K$-theory}, Springer-Verlag, Berlin, 2002.
 \bibitem[MZ]{MZ}X. Ma,  W. Zhang, Eta-invariants, torsion forms and flat
   vector bundles, \emph{Math. Ann.} (2008) \textbf{340}, 569--624.
\bibitem[PS]{PS}  P.~Piazza, T.~Schick, Bordism, rho-invariants and the Baum-Connes conjecture. \emph{J. Noncommut. Geom.} \textbf{1} (2007), 27–111.
\bibitem[Q]{Q} D.~Quillen, 
Superconnections and the Chern character, \emph{Topology} \textbf{24} (1985), 89--95.
\bibitem[R]{R} I.~Roy, \emph{Foliated $\rho$-invariants}, Doctoral thesis, University of Metz and
University of Paderborn, 2010.
\bibitem[Sc1]{Sc1} T.~Schick, $L^2$-index theorems, KK-theory,  and connections,
  \emph{New York J. Math.}  \textbf{11} (2005), 387--443.
\bibitem[Sc2]{Sc2}  T.~Schick, Integrality of $L^2$-Betti
  numbers. \emph{Math. Ann.} \textbf{317} (2000),  727–750.
\bibitem[Sh]{Sh}M.~Shubin, 
De Rham theorem for extended $L^2$-cohomology, 
\emph{Amer. Math. Soc. Transl.} Ser. 2, 184, Amer. Math. Soc., Providence, RI, 1998. 
\end{thebibliography}
 \end{document}